\ifodd\value{page}\relax
\newtheorem{lemma}{Lemma}[section]
\newtheorem{remark}[lemma]{Remark}
\newtheorem{theorem}[lemma]{Theorem}
\newtheorem{corollary}[lemma]{Corollary}
\newenvironment{manualtheorem}[1]{%
  \manualtheoreminner
}{\endmanualtheoreminner}
\title{Extremal Contractions of Projective Bundles}
\author{\small{Ashima Bansal, Supravat Sarkar, Shivam Vats}}
\date{}
\begin{document}
\maketitle
\begin{abstract}
In this article, we explore the extremal contractions of several projective bundles over smooth Fano varieties of Picard rank $1$. We provide a whole class of examples of projective bundles with smooth blow-up structures, derived from the notion of drums which was introduced by Occhetta-Romano-Conde-Wiśniewski to study interaction with $\mathbb{C}^*$-actions and birational geometry. By manipulating projective bundles, we give a simple geometric construction of the rooftop flip, which was introduced recently by Barban-Franceschini. Additionally, we obtain analogues of some recent results of Vats in higher dimensions. The list of projective bundles we consider includes all globally generated bundles over projective space with first Chern class $2$. For each of them, we compute the nef and pseudoeffective cones.

\end{abstract}
\begin{center} 
\textbf{Keywords}: Blow-up, rooftop flip, drum, projective bundle, pseudoeffective cone 
\end{center}
\begin{center}
\textbf{MSC Number: 14E30; 14E05} 
\end{center}
\section{Introduction}
The smooth projective varieties of Picard rank 2 have a relatively simple yet non-trivial structure, making them ideal cases for studying the intricate relationships between geometry and topology. Understanding these varieties can provide insights into the general properties of algebraic varieties. Varieties with low Picard rank play a crucial role in the Minimal Model Program (MMP) and Mori Theory, which are central areas of research in algebraic geometry. These theories aim to classify higher-dimensional varieties by breaking them down into simpler components.

\hspace{10pt}There are two standard ways of constructing such varieties. One is to take a projective bundle over a smooth projective variety of Picard rank $1$, and the other is to blow-up a smooth projective variety of Picard rank $1$ along a smooth subvariety. It is natural to study smooth projective varieties of Picard rank $2$ that possess both a projective bundle structure and a smooth blow-up structure. The simplest example is the blow-up of a projective space along a linear subspace of codimension at least 2. It is well-known that this blow-up has a projective bundle structure over another projective space. In \cite{Ra}, \cite{Va}, \cite{GN}, several examples of projective bundles with blow-up structures are provided. \cite{Li1} and \cite{Li2} classify varieties with both projective bundle and blow-up structures under certain conditions. 

\hspace{10pt} One of the goals of this paper is to provide further examples of projective bundles over smooth projective varieties of Picard rank $1$ which have smooth blow-up structures. The notion of a \textit{drum} was introduced in \cite{ORCW} to study the interaction between $\mathbb{C}^*$-actions and birational geometry. Using this concept of drum, in \S 3, we show that a whole class of examples can be introduced.

\textbf{Theorem A} (Theorem \ref{theorem:theorem drum}):
Let $X$ be a smooth drum constructed upon the triple $ (Y, \mathcal{L}_{-}, \mathcal{L}_{+})$. Define the vector bundles $ \mathcal{E}_{\pm}$ on $ Y_{\pm}$  by $ \mathcal{E}_{\pm}= (p_{\pm})_*(\mathcal{L}_{\mp}).$ As in {\cite [Remark 2.13]{LF}}, we have embeddings $Y_{\pm}\hookrightarrow X $, and the following hold:

$(i)$ $ \mathbb{P}_{Y_{-}}(L_{-} \oplus \mathcal{E}_{-}) \cong \textnormal{Bl}_{Y_{+}} (X)$

$(ii)$ Suppose $s$ is a nowhere vanishing of $ L_{-}\oplus \mathcal{E}_{-}$, and define the vector bundle $F$ on $ Y_{-}$ by the exact sequence 

\begin{equation*}
    0 \longrightarrow  \mathcal{O}_{Y_{-}} \xlongrightarrow{s} L_{-} \oplus \mathcal{E}_{-} \longrightarrow F \longrightarrow 0.
\end{equation*}

Then, there is a hyperplane $H_s$ in $\mathbb{P}(V)$ such that $H_{s} \cap X$, $ H_{s}\cap Y_{+}$ are smooth, $Y_{+}\not\subset H_s$ and $ \mathbb{P}_{Y_{-}}(F) \cong \textnormal{Bl}_{H_{s}\cap Y_{+}}(H_{s} \cap X).$  

As concrete applications of Theorem A, we give the following explicit examples:

\textbf{Theorem B} (Corollary \ref{theorem:theorem grassmanian} and \ref{theorem:theorem r}):
\begin{enumerate}

 \item {For 1 $\leq r \leq m-1$ } we have: 
 \item [{1.1}] The blow-up of $\textnormal{Gr}(r+1, m+2)$ along $\textnormal{Gr}(r, m+1)$ is a projective bundle over $\textnormal{Gr}(r+1, m+1)$.
 \item [{1.2}] The blow-up of $\textnormal{Gr}(r+1, m+2)$ along $\textnormal{Gr}(r+1, m+1)$ is a projective bundle over $\textnormal{Gr}(r, m+1)$.
\item {For $m\geq3$, blow-up of $\textnormal{OG}(m, 2m+1)$ along $\textnormal{OG}_{+}(m, 2m)(\cong$ $\textnormal{OG}(m-1, 2m-1)$) is a projective bundle over $\textnormal{OG}(m-1, 2m-1)$}. 
\item Let $m\geq 2$, $1\leq r \leq m-1$, and let $w$ be a skew-form on $k^{2m+1}$ of maximal rank. We have:
\item [{3.1}] The blow-up of $\textnormal{Gr}_{w}(r+1, 2m+1)$ along $\textnormal{SG}(r, 2m)$ is a projective bundle over $\textnormal{SG}(r+1, 2m)$.
\item[{3.2}] The blow-up of $\textnormal{Gr}_{w}(r+1, 2m+1)$ along $\textnormal{SG}(r+1, 2m)$ is a projective bundle over $\textnormal{SG}(r+1, 2m)$.

\item  Let $ n\geq 2$. The following statements hold: 

\item [{4.1}] $\mathbb{P}_{\mathbb{P}^n}(\mathcal{O}_{\mathbb{P}^n}(1) \oplus T_{\mathbb{P}^n}(-1))$ is the blow-up of a smooth quadric in $\mathbb{P}^{2n+1}$ along a linear subvariety of dimension $n$.
    
\item [{4.2}] If 
    \begin{equation}
0\longrightarrow \mathcal{O}_{\mathbb{P}^n}\xlongrightarrow {s}{\mathcal{O}_{\mathbb{P}^n}(1) \oplus T_{\mathbb{P}^n}(-1)}\longrightarrow F\longrightarrow 0
\end{equation}
 is an short exact sequence of vector bundles, where \lq$s$\rq\medspace is a nowhere vanishing section, then $\mathbb{P}_{\mathbb{P}^n}(F)$ is the blow-up of the smooth quadric in $\mathbb{P}^{2n}$ along a linear subvariety of dimension $n-1$.
 \item 
 Let $ n\geq 3$. The following statements hold:
     
\item[{5.1}] $\mathbb{P}_{\mathbb{P}^n}(\mathcal{O}_{\mathbb{P}^n}(1) \oplus \Omega_{\mathbb{P}^n}(2)) $ is the blow-up of $\textnormal{Gr}(2,n+2)$ along $\textnormal{Gr}(2,n+1)$.    

\item[{5.2}] If \lq$s$\rq\medspace is a nowhere vanishing section of $ \mathcal{O}_{\mathbb{P}^n}(1)\oplus \Omega_{\mathbb{P}^n}(2)$ and the vector bundle $E$ is defined by the exact sequence 

$$ 0\longrightarrow \mathcal{O}_{\mathbb{P}^n}(1) \longrightarrow \mathcal{O}_{\mathbb{P}^n}(1) \oplus \Omega_{\mathbb{P}^n}(2) \longrightarrow E \longrightarrow 0  ,$$ then $ \mathbb{P}_{\mathbb{P}^n}(E)$ is the blow-up of the $ H \cap \textrm{Gr}(2,n+2)$ along $ H \cap \textnormal{Gr}(2,n+1)$, where $ H \cap \textnormal{Gr}(2,n+2)$ is a smooth hyperplane section of $ \textnormal{Gr}(2,n+2)$ under the Plücker embedding, such that $ H\cap \textnormal{Gr}(2,n+1)$ is also smooth. For example, this gives smooth blow-up structure on $\mathbb{P}_{\mathbb{P}^{n}}(\mathcal{O}_{\mathbb{P}^{n}}(1)\oplus \mathcal{N}_{\mathbb{P}^n}(1))$, where $n$ is odd and $\mathcal{N}_{\mathbb{P}^n}$ is a null-correlation bundle on $\mathbb{P}^n.$
 
\end{enumerate}

\hspace{10pt}
In \S 4, we provide examples of projective bundles over projective spaces that are the blow-ups of singular hypersurfaces in projective spaces along linear subvarieties. This allows us to give the following generalization/analogue of the results in \cite{Va} in higher dimensions:

\textbf{Theorem C:} (Corollary \ref{theorem:theorem shivam})
Let $d, n \geq 2$ be integers, and let 
 $$V= \{\underline{x}\in \mathbb{P}^{2n+1} \,|\, x_{0}^{d-1}x_{1}+ x_{2}^{d-1}x_{3}+\ldots+ x_{2n}^{d-1}x_{2n+1}=0\}$$ 
$$ L_{0} = \{ \underline{x} \in \mathbb{P}^{2n+1} \medspace | \medspace x_{0} = x_{2} = \ldots = x_{2n}=0 \}.$$
There exists a nonempty open set $U_{n,d}$ in $\mathbb{P}^{2n+1}$ such that, for each point $\sigma\in U_{n,d}$, we have a rank $n$ vector bundle $E_{n,d,\sigma}$ over $\mathbb{P}^n$ and a hyperplane $H_\sigma$ in $\mathbb{P}^{2n+1}$ not containing $L_0.$ The linear system $|\mathcal{O}_{\mathbb{P}(E_{n,d,\sigma})}(1)|$ defines a morphism $\mathbb{P}_{\mathbb{P}^n}(E_{n,d,\sigma})\to \mathbb{P}^{2n+1}$, which identifies $\mathbb{P}_{\mathbb{P}^n}(E_{n,d,\sigma})$ as the blow-up of $V\cap H_\sigma$ along $L_0\cap H_\sigma$.

\hspace{10pt} Rooftop flips were introduced in \cite{LF}, where they used the GIT quotient to construct rooftop flips. In \S{6}, we provide a simple and geometric construction of rooftop flips using basic manipulations with projective bundles. This approach offers a simplified and more lucid version of the construction presented in \cite{LF}. One advantage of our construction is that we can also characterize when our rooftop flip is a flip or a flop.

\textbf{Theorem D:} (Theorem \ref{theorem:theorem drum})
Under the same hypothesis and notations as in Theorem A, let $W_{\pm}=\mathbb{P}_{Y_{\pm}}(\mathcal{O}\oplus \mathcal{E}_{\pm}(L_{\pm}))$, $W=\mathbb{P}_Y(\mathcal{O}\oplus \mathcal{L}_{+}\otimes \mathcal{L}_{-}),$ and $W_0=$ normalization of the projective cone over the embedding, $i:Y\xhookrightarrow{p_-\times p_+}Y_-\times Y_+\hookrightarrow \mathbb{P}(H^0(Y_-,L_-)\otimes H^0(Y_+,L_+)).$ Then, there is a rooftop flip $\psi$ modeled by $Y$, as in \cite[Definition 3.2]{LF}, with $W, W_{\pm}, W_0$ as above. If dim$Y_->$ dim$Y_+$, $\psi$ is a flip. If dim$Y_-=$ dim$Y_+$, $\psi$ is a flop.

\hspace{10pt}If $E$ is a globally generated vector bundle on $\mathbb{P}^n$ with $c_1(E)\leq n$, then $\mathbb{P}_{\mathbb{P}^n}(E)$ turns out to be a Fano manifold of Picard rank $2$. Consequently, the contraction of the other extremal ray always exists in characteristic $0$, making it an interesting object of study. This was explored in \cite{MOC}, for the case where the
rank of $E$ is 2. If $c_1(E)\leq 1$, $E$ is either trivial or a direct sum of a trivial bundle with $\mathcal{O}_{\mathbb{P}^n}(1)$ or $T_{\mathbb{P}^n}(-1)$, where $T_{\mathbb{P}^n}$ is the tangent bundle of $\mathbb{P}^n$. It is straightforward to describe the other extremal contraction and the nef and pseudoeffective cone of the projectivization of such bundles. In \cite{SU}, globally generated vector bundles on $\mathbb{P}^n$ with first Chern class $2$ are classified:

\begin{manualtheorem}{($\ast$)}\label{theorem:theorem cl}
 Let $E$ be a globally generated vector bundle of rank $r$ on $\mathbb{P}^n$. If $c_{1}(E) = 2 $ then one of the following holds:
\begin{itemize}
\item[(1)]$ E \cong \mathcal{O}_{\mathbb{P}^n}(2) \bigoplus \mathcal{O}_{\mathbb{P}^n}^{r-1}$
\item[(2)] $ E \cong \mathcal{O}_{\mathbb{P}^n}(1) \bigoplus \mathcal{O}_{\mathbb{P}^n}(1) \bigoplus \mathcal{O}_{\mathbb{P}^n}^{r-2}$
\item[(3)] $0\longrightarrow \mathcal{O}_{\mathbb{P}^n} \longrightarrow \mathcal{O}_{\mathbb{P}^n}(1) \bigoplus T_{\mathbb{P}^n}(-1)\oplus\mathcal{O}_{\mathbb{P}^n}^{r-n} \longrightarrow E \longrightarrow 0 $, with $ r\geq n$.
\item[(4)]$n=3$ and $E \cong \Omega_{\mathbb{P}^3}(2) \bigoplus \mathcal{O}_{\mathbb{P}^3}^{r-3}$, or $E \cong \mathcal{N}_{\mathbb{P}^3}(1) \bigoplus \mathcal{O}_{\mathbb{P}^3}^{r-2}$, where $ \mathcal{N}_{\mathbb{P}^3}$ is a null-correlation bundle. 
\item[(5)]$ 0 \longrightarrow \mathcal{O}_{\mathbb{P}^n}(-1) \bigoplus \mathcal{O}_{\mathbb{P}^n}(-1) \longrightarrow \mathcal{O}_{\mathbb{P}^n}^{r+2} \longrightarrow E \longrightarrow 0 $, with $ r\geq n$.
\item[(6)]$0\longrightarrow \mathcal{O}_{\mathbb{P}^n}(-2) \longrightarrow  \mathcal{O}_{\mathbb{P}^n}^{r+1} \longrightarrow E \longrightarrow 0 $, with $r\geq n$.
\end{itemize}
\end{manualtheorem}
 
\hspace{10pt}We study the other extremal contraction of their projectivizations. If $n=1$, $E$ is a direct sum of a trivial bundle with either   $\mathcal{O}_{\mathbb{P}^n}(1)^{\oplus 2}$ or $\mathcal{O}_{\mathbb{P}^n}(2)$. These two cases are analyzed in Corollary \ref{theorem:theorem sm} and Case \ref{case:case 1} of \S 7 respectively. Therefore, we assume $n\geq 2$ in other contexts. In \S $7$, we classify when these bundles have a smooth blow-up structure.

\textbf{Theorem E:} (Theorem \ref{theorem:theorem sbu})
Let $n\geq 2$. If $E$ is a globally generated vector bundle on $\mathbb{P}^n$ of rank greater than or equal to 2 and $c_{1}(E) = 2$, then $\mathbb{P}_{\mathbb{P}^n}(E)$ has a smooth blow-up structure if and only if one of the following holds:
\begin{itemize}
   
\item[$(i)$] $E\cong \mathcal{O}_{\mathbb{P}^{n}}(1)\oplus T_{\mathbb{P}^n}(-1)$. In this case, $\mathbb{P}_{\mathbb{P}^n}(E)$  is the blow-up of a smooth quadric in $\mathbb{P}^{2n+1}$ along a linear subvariety of dimension $n$.
\item[$(ii)$] {There is a short exact sequence $0\longrightarrow\mathcal{O}_{\mathbb{P}^{n}}\longrightarrow\mathcal{O}_{\mathbb{P}^{n}}(1)\oplus T_{\mathbb{P}^{n}}(-1)\longrightarrow E\longrightarrow 0$}. In this case, $\mathbb{P}_{\mathbb{P}^n}(E)$  is the blow-up of a smooth quadric in $\mathbb{P}^{2n}$ along a linear subvariety of dimension $n-1$.
\item[$(iii)$] If $n = 2$, there is a short exact sequence $0\longrightarrow \mathcal{O}_{\mathbb{P}^{n}}(-1)^{2}\xlongrightarrow{s}\mathcal{O}_{\mathbb{P}^{n}}^{r+2}\longrightarrow E\longrightarrow 0$, and $E$ does not contain a trivial line bundle as a direct summand. In this case, $2\leq r\leq 4$, and $\mathbb{P}_{\mathbb{P}^n}(E)$ is the blow-up of $\mathbb{P}^{r+1}$ along $V$, where
\begin{equation*}
V = \begin{cases}
\textnormal{twisted cubic}, & \quad \textnormal{if}\quad r = 2, \\
Z(X_{0}X_{3} - X_{1}X_{2}, X_{1}X_{4} - X_{2}X_{3}, X_{0}X_{4} - X_{2}^{2})\hookrightarrow \mathbb{P}^{4}, & \quad\textnormal{if}\quad r = 3, \\
\textnormal{image of Segre embedding}\, \mathbb{P}^{1}\times\mathbb{P}^{2}\hookrightarrow\mathbb{P}^{5}, & \quad \textnormal{if}\quad r = 4.
\end{cases}
\end{equation*} 
\end{itemize}

\hspace{10pt} The computation of nef and pseudoeffective cones of projective bundles is an active area of research. Several papers, such as  \cite{MOC}, \cite{MS}, \cite{Mi}, have contributed to this. In \S $7$, Theorem \ref{theorem:theorem cone}, we compute the nef and pseudoeffective cones of the projectivizations of the vector bundles in Theorem \ref{theorem:theorem cl}.

\section{Notations, conventions and preliminary lemmas}

Throughout this paper, we work over the field $k=\mathbb{C}$ of complex numbers for simplicity, though most of the proofs remain valid over any algebraically closed field. A \textit{variety} is an integral, separated scheme of finite type over $k$.

\hspace{10pt} For each $ n\in \mathbb{N}$, $ \mathbb{P}^n$ denotes the projective $n$-space over $k$, $Q_n$ denotes a smooth quadric hypersurface in $ \mathbb{P}^{n+1}$ . We have the tautological subbundle $\mathcal{O}_{\mathbb{P}^n}(-1)\subset\mathcal{O}_{\mathbb{P}^n}^{n+1}$, on dualizing, we obtain a surjection $\mathcal{O}_{\mathbb{P}^n}^{n+1}\to \mathcal{O}_{\mathbb{P}^n}(1)$. We call this the \textit{tautological surjection}, which induces an isomorphism on the vector spaces of global sections. In coordinates, this is given by the ordered $(n+1)$-tuple of sections of $\mathcal{O}_{\mathbb{P}^n}(1): (x_0,x_1,...,x_n).$

For odd $ n\geq 3$, we define  \textit{null-correlation bundle} on $ \mathbb{P}^n$ to be a vector bundle $ \mathcal{N}_{\mathbb{P}^n}$ fitting into a sequence 
$$ 0\longrightarrow \mathcal{O}_{\mathbb{P}^n} \xlongrightarrow{s} \Omega_{\mathbb{P}^n}(2) \longrightarrow \mathcal{N}_{\mathbb{P}^n}(1) \longrightarrow 0,$$ where \lq$s$\rq\medspace is a nowhere vanishing section of $ \Omega_{\mathbb{P}^n}(2)$. This definition is dual to the one given in {\cite[section 4.2]{OSS}}.

Let Gr($r,n)$, OG($r,n)$, OG$_+(r,n)$, SG($r,n)$, Gr$_{w}(r, 2m+1)$ be the Grassmanian, orthogonal Grassmanian, symplectic Grassmanian, and odd symplectic Grassmanian, respectively, as in \cite{Pa} or \cite[Section 4]{CRC}. Writing $k^{m+2}=k^{m+1} \oplus k$, we see that Gr$(r, m+1)$ and Gr$(r+1, m+1)$ are naturally subvarieties of Gr$(r+1, m+2)$ for $m\geq 3$ and $1\leq r\leq m-1$. Given an orthogonal decomposition $k^{2m+1}=k^{2m} \oplus k$, we see that OG$_+(m, 2m)$ is naturally a subvariety of OG$(m, 2m+1)$ for $m\geq 3$. Finally, given a skew-form $w$ of maximal rank on $k^{2m+1}$, and writing $k^{2m+1}=k^{2m} \oplus k$, where the direct summand $k$ is the kernel of $w$, we see that SG$(r, 2m)$, SG$(r+1, 2m)$ are naturally subvarieties of Gr$_w(m, 2m+1)$ for $m\geq 2$ and $1\leq r \leq m-1$.

\hspace{10pt} If $E$ is a vector bundle over $ \mathbb{P}^n$, let rk $E$ denote the rank of $E$, and $c_1(E)$, the first Chern class of $E$, be the unique integer $d$ such that det $E=\mathcal{O}_{\mathbb{P}^{n}}(d)$. Clearly, this is the usual definition of the first Chern class if we identify $\textnormal{A}^1(\mathbb{P}^n)$ with $\mathbb{Z}$ by taking the class of a hyperplane as the generator.

\hspace{10pt}We follow the convention for projective bundles as in {\cite [Chapter 2, Section 7]{Ha}}.

\begin{lemma}
    If $n\geq 2$ and $E$ is a globally generated vector bundle on $\mathbb{P}^n$ of rank $r$ with $c_1(E)\leq n$, then $\mathbb{P}_{\mathbb{P}^n}(E)$ is a Fano variety.
\end{lemma}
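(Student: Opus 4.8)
The plan is to show that the anticanonical class of $X := \mathbb{P}_{\mathbb{P}^n}(E)$ is ample by computing it explicitly and then exhibiting it as the sum of a relatively ample nef class and the pullback of an ample class on the base. Write $\pi\colon X\to\mathbb{P}^n$ for the projection, $\xi=c_1(\mathcal{O}_X(1))$ for the tautological class in the convention of \cite{Ha}, $H$ for the hyperplane class on $\mathbb{P}^n$, and $h=\pi^*H$. Let $r$ denote the rank of $E$ and $d=c_1(E)$.

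First I would compute the canonical class. From the relative form of the Euler sequence (the bundle version of {\cite[II.8.13]{Ha}}),
$$0\longrightarrow \Omega_{X/\mathbb{P}^n}\longrightarrow (\pi^*E)(-1)\longrightarrow \mathcal{O}_X\longrightarrow 0,$$
taking determinants gives $\omega_{X/\mathbb{P}^n}\cong\mathcal{O}_X(-r)\otimes\pi^*\det E$. Combining this with $\omega_{\mathbb{P}^n}\cong\mathcal{O}_{\mathbb{P}^n}(-(n+1))$ yields
$$-K_X=r\xi-\pi^*\!\left(K_{\mathbb{P}^n}+\det E\right)=r\xi+(n+1-d)\,h.$$
Since $d=c_1(E)\leq n$ by hypothesis, the coefficient $n+1-d$ is at least $1$, so $(n+1-d)H$ is an ample class on $\mathbb{P}^n$. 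This is the one place the hypothesis is used, and keeping track of Hartshorne's quotient convention in the relative canonical formula is the only genuinely delicate point.

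Next I would record two positivity properties of $\xi$. Because $E$ is globally generated, a surjection $\mathcal{O}_{\mathbb{P}^n}^{N}\twoheadrightarrow E$ pulls back and composes with the tautological surjection $\pi^*E\twoheadrightarrow\mathcal{O}_X(1)$ to present $\mathcal{O}_X(1)$ as a quotient of $\mathcal{O}_X^{N}$; hence $\mathcal{O}_X(1)$ is globally generated and $\xi$ is nef. Moreover $\mathcal{O}_X(1)$ is $\pi$-ample by construction of the projective bundle.

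It then remains to prove the elementary claim that if $L$ is a line bundle on $X$ which is nef and $\pi$-ample and $A$ is an ample class on $\mathbb{P}^n$, then $L+\pi^*A$ is ample; I would verify this through Kleiman's criterion, testing on $\overline{NE}(X)\setminus\{0\}$. For a curve class $C$ contracted by $\pi$ we have $\pi^*A\cdot C=0$ while $L\cdot C>0$ by relative ampleness, so $(L+\pi^*A)\cdot C>0$; for a class $C$ not contracted by $\pi$, the class $\pi_*C$ is nonzero and effective, so $A\cdot\pi_*C>0$ and $L\cdot C\geq 0$, again giving $(L+\pi^*A)\cdot C>0$. Applying the claim with $L=r\xi$ (nef and $\pi$-ample) and $A=(n+1-d)H$ shows that $-K_X$ is ample, so $X$ is Fano. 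Once the formula $-K_X=r\xi+(n+1-d)h$ is in hand, the remaining positivity argument is routine, and no explicit identification of the second extremal ray is needed.
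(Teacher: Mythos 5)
Your proof is correct and follows essentially the same route as the paper: both compute $-K_X=r\,\xi+(n+1-d)h$ from the projective-bundle canonical formula, observe that $\xi$ is nef because $E$ is globally generated and that $n+1-d\geq 1$, and conclude ampleness. The only cosmetic difference is that the paper finishes by noting this class lies in the interior of the $2$-dimensional nef cone, whereas you verify Kleiman's criterion directly using the $\pi$-ampleness of $\mathcal{O}_X(1)$; both are routine closings of the same argument.
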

\begin{proof}
If $H$ is pullback of $\mathcal{O}_{\mathbb{P}^{n}}(1)$ to $\mathbb{P}_{\mathbb{P}^n}(E)$. Then $H$ is nef. Since $E$ is globally generated, $\mathcal{O}_{\mathbb{P}(E)}(1)$ is also globally generated, hence nef. If $d=c_1(E)\leq n$, by the formula for the canonical divisor of a projective bundle (see \cite{Re}, p. 349), we have $-K_{\mathbb{P}(E)}=(n+1-d)H+r\medspace\mathcal{O}_{\mathbb{P}(E)}(1)$, which lies in the interior of the nef cone of $\mathbb{P}_{\mathbb{P}^n}(E)$ and is therefore, ample.
\end{proof}

\hspace{10pt} A proper map of varieties $p:X\to Y$ is called a \textit{contraction} if $p_*\mathcal{O}_X=\mathcal{O}_Y.$ An \textit{extremal contraction} of a smooth projective variety is a contraction of a $K$-negative extremal ray of the Mori cone. By the above lemma,
$\mathbb{P}_{\mathbb{P}^n}(E)$ is a smooth Fano variety of Picard rank $2$, so it has an extremal contraction in addition to the projective bundle contraction. We refer to this contraction as the \textit{other contraction} throughout the paper.

\hspace{10pt}If $Z$ is a subvariety of a variety $X$, we denote the blow up of $X$ along $Z$ by Bl$_Z(X)$. By a \textit{smooth blow-up}, we mean blow-up of a smooth variety along a smooth subvariety. If $p$ is a smooth blow-up, Ex$(p)$ denotes the exceptional divisor of $p$.

The following lemma is a relative version of the well-known fact in \cite[Proposition 9.11]{EH}: If $r,d \geq 1$, and $L\hookrightarrow \mathbb{P}^{r+d}$ is a linear subspace of dimension $r-1$, then $\textnormal{Bl}_\textnormal{L} \mathbb{P}^{r+d} \cong \mathbb{P}_{\mathbb{P}^d}(\mathcal{O}_{\mathbb{P}^{d}}^{r} \oplus \mathcal{O}_{\mathbb{P}^{d}}(1))$.

\begin{lemma}\label{lemma:lemma relative}
Let $ r \geq 1$, $X$ be a smooth projective variety, and $E$ a vector bundle of rank greater than equal to 2 over $X$. Identify $ X \times \mathbb{P}^{r-1} = \mathbb{P}_{X}(\mathcal{O}_{X}^{r}) $ as a projective subbundle of $ \mathbb{P}_{X}(\mathcal{O}_{X}^{r}\oplus E)$ via the surjection $\mathcal{O}_{X}^{r} \oplus E \twoheadrightarrow 
 \mathcal{O}_{X}^{r}$. Then  $ \textnormal{Bl}_{{X\times \mathbb{P}^{r-1}}}( \mathbb{P}_{X}(\mathcal{O}_{X}^{r}\oplus E)) \cong \mathbb{P}_{\mathbb{P}(E)}(\mathcal{O}_{\mathbb{P}(E)}^{r} \oplus \mathcal{O}_{\mathbb{P}(E)}(1))$, with the exceptional divisor $ \mathbb{P}(E) \times \mathbb{P}^{r-1}= \mathbb{P}_{\mathbb{P}(E)}(\mathcal{O}_{\mathbb{P}(E)}^{r}) \hookrightarrow \mathbb{P}_{\mathbb{P}(E)}(\mathcal{O}_{\mathbb{P}(E)}^{r} \oplus \mathcal{O}_{\mathbb{P}(E)}(1)).$  Under the blow-up map $ \mathbb{P}_{Y}(\mathcal{O}_{Y}^{r}\oplus \mathcal{O}_{Y}(1)) \longrightarrow \mathbb{P}_{X}(\mathcal{O}_X^{r}\oplus E)$, the line bundle $\mathcal{O}_{\mathbb{P}_{X}(\mathcal{O}_X^{r}\oplus E)}(1)$ pulls back to $\mathcal{O}_{\mathbb{P}_{Y}(\mathcal{O}_Y^{r}\oplus \mathcal{O}_Y(1) )}(1)$, where  $Y=\mathbb{P}(E).$
\end{lemma}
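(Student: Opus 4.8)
The plan is to build an explicit morphism $f\colon\mathcal{Q}\to\mathcal{P}$, where $\mathcal{Q}=\mathbb{P}_Y(\mathcal{O}_Y^{r}\oplus\mathcal{O}_Y(1))$ with $Y=\mathbb{P}(E)$ and $\mathcal{P}=\mathbb{P}_X(\mathcal{O}_X^{r}\oplus E)$, and then to recognize $f$ as the blow-up along $Z=X\times\mathbb{P}^{r-1}=\mathbb{P}_X(\mathcal{O}_X^{r})$. Write $q\colon\mathcal{Q}\to Y$, $\pi\colon Y\to X$ and $g=\pi\circ q\colon\mathcal{Q}\to X$. On $Y$ we have the tautological surjection $\pi^{*}E\twoheadrightarrow\mathcal{O}_Y(1)$; pulling it back by $q$ and combining it with the identity on the trivial summand gives a surjection $g^{*}(\mathcal{O}_X^{r}\oplus E)=\mathcal{O}_{\mathcal{Q}}^{r}\oplus q^{*}\pi^{*}E\twoheadrightarrow q^{*}(\mathcal{O}_Y^{r}\oplus\mathcal{O}_Y(1))$. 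Composing with the tautological quotient $q^{*}(\mathcal{O}_Y^{r}\oplus\mathcal{O}_Y(1))\twoheadrightarrow\mathcal{O}_{\mathcal{Q}}(1)$ of $\mathcal{Q}$ produces a surjection $g^{*}(\mathcal{O}_X^{r}\oplus E)\twoheadrightarrow\mathcal{O}_{\mathcal{Q}}(1)$. By the universal property of $\mathcal{P}$ (see \cite{Ha}) this is precisely the datum of a morphism $f\colon\mathcal{Q}\to\mathcal{P}$ over $X$ with $f^{*}\mathcal{O}_{\mathcal{P}}(1)=\mathcal{O}_{\mathcal{Q}}(1)$, which already yields the last assertion of the lemma about the pullback of $\mathcal{O}(1)$.

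Next I would identify the ideal of the center. The subbundle $Z=\mathbb{P}_X(\mathcal{O}_X^{r})$ is the vanishing locus of the canonical section $\sigma\in H^{0}(\mathcal{P},E^{\vee}\otimes\mathcal{O}_{\mathcal{P}}(1))$ obtained by restricting the tautological quotient $(\mathcal{O}_X^{r}\oplus E)|_{\mathcal{P}}\twoheadrightarrow\mathcal{O}_{\mathcal{P}}(1)$ to the summand $E$. Since $\mathrm{rk}\,E\ge 2$, the locus $Z$ has codimension $\mathrm{rk}\,E$, so $\sigma$ is a regular section and $\mathcal{I}_Z$ is the image of $\sigma\colon E\otimes\mathcal{O}_{\mathcal{P}}(-1)\to\mathcal{O}_{\mathcal{P}}$; it is exactly here that the hypothesis $\mathrm{rk}\,E\ge 2$ enters, guaranteeing that $Z$ is a genuine blow-up center rather than a divisor. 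Pulling $\sigma$ back along $f$, the factorization above shows that $f^{*}\sigma$ is the composite of the surjection $q^{*}\pi^{*}E\twoheadrightarrow q^{*}\mathcal{O}_Y(1)$ with the map $\tau\colon q^{*}\mathcal{O}_Y(1)\to\mathcal{O}_{\mathcal{Q}}(1)$ induced by the $\mathcal{O}_Y(1)$-summand of the tautological quotient of $\mathcal{Q}$. As $\tau$ is a section of the line bundle $\mathcal{O}_{\mathcal{Q}}(1)\otimes q^{*}\mathcal{O}_Y(-1)$ whose zero locus is precisely the subbundle $D=\mathbb{P}_Y(\mathcal{O}_Y^{r})=\mathbb{P}(E)\times\mathbb{P}^{r-1}$, and the first map is surjective, the inverse image ideal $f^{-1}\mathcal{I}_Z\cdot\mathcal{O}_{\mathcal{Q}}$ equals $\mathcal{O}_{\mathcal{Q}}(-D)$, which is invertible. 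By the universal property of blow-ups, $f$ factors as $\mathcal{Q}\xrightarrow{h}\mathrm{Bl}_Z\mathcal{P}\to\mathcal{P}$, and $D$ is the candidate exceptional divisor.

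Finally I would prove that $h$ is an isomorphism. The morphism $f$ is an isomorphism over $\mathcal{P}\setminus Z$: there the $E$-component of the tautological quotient is a surjection $E\twoheadrightarrow\mathcal{O}_{\mathcal{P}}(1)$, recovering a point of $Y=\mathbb{P}(E)$, while the residual map $\mathcal{O}_X^{r}\to\mathcal{O}_{\mathcal{P}}(1)$ is exactly a point of the affine chart $\mathcal{Q}\setminus D$ of a fiber of $q$, which yields an explicit inverse. Hence $h$ is a proper birational morphism of smooth varieties, an isomorphism away from the exceptional loci. To upgrade this to an isomorphism everywhere, I would restrict to the exceptional divisors: both $D\to Z$ and $\mathrm{Ex}(\mathrm{Bl}_Z\mathcal{P})=\mathbb{P}(\mathcal{N}_{Z/\mathcal{P}})\to Z$ are $\mathbb{P}^{\mathrm{rk}\,E-1}$-bundles, and a computation of $\mathcal{N}_{Z/\mathcal{P}}$ from the normal bundle sequence of the subbundle $\mathbb{P}_X(\mathcal{O}_X^{r})\subset\mathbb{P}_X(\mathcal{O}_X^{r}\oplus E)$ identifies $h$ on fibers with a linear isomorphism, so $h$ is quasi-finite; being proper and birational onto a normal variety, it is an isomorphism by Zariski's main theorem. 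The main obstacle is exactly this last step, namely checking that $h$ does not contract the fibers of $D\to Z$, for which the normal-bundle bookkeeping in the chosen projective-bundle convention must be carried out carefully. One may instead bypass that computation either by verifying the universal property of the blow-up for $f$ directly, or by passing to an open cover of $X$ on which $E$ trivializes and invoking fiberwise the absolute statement $\mathrm{Bl}_{\mathbb{P}^{r-1}}\mathbb{P}^{r+d}\cong\mathbb{P}_{\mathbb{P}^{d}}(\mathcal{O}^{r}\oplus\mathcal{O}(1))$ of \cite{EH}.
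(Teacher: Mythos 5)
Your proposal is correct, but it reaches the conclusion by a genuinely different route than the paper. The paper builds the morphism $\tilde\pi\colon \mathbb{P}_{\mathbb{P}(E)}(\mathcal{O}^{r}\oplus\mathcal{O}_{\mathbb{P}(E)}(1))\to\mathbb{P}_X(\mathcal{O}_X^r\oplus E)$ by functoriality (pushing forward $\mathcal{O}_{\mathbb{P}(E)}^r\oplus\mathcal{O}_{\mathbb{P}(E)}(1)$ to $\mathcal{O}_X^r\oplus E$) and then invokes the blow-up recognition criterion of Ein--Shepherd-Barron, reducing the verification of its fiber conditions to the classical absolute statement $\mathrm{Bl}_{\mathbb{P}^{r-1}}\mathbb{P}^{r+d}\cong\mathbb{P}_{\mathbb{P}^d}(\mathcal{O}^r\oplus\mathcal{O}(1))$ on each fiber over $X$. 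You instead construct the same morphism via the universal property of the projective bundle (which gives the $\mathcal{O}(1)$-pullback statement for free, exactly as in the paper), identify $\mathcal{I}_Z$ as the image of the regular section $\sigma\in H^0(\mathcal{P},p^*E^\vee\otimes\mathcal{O}_{\mathcal{P}}(1))$, check that the inverse image ideal is the invertible sheaf $\mathcal{O}_{\mathcal{Q}}(-D)$, and conclude via the universal property of blow-ups plus Zariski's main theorem. Your computation of $f^{*}\sigma$ as a surjection followed by the section $\tau$ cutting out $D$ is correct and is the key point; the remaining normal-bundle bookkeeping you flag as the main obstacle does go through (with Hartshorne's convention, $\mathrm{Ex}(\mathrm{Bl}_Z\mathcal{P})=\mathbb{P}(\mathcal{I}_Z/\mathcal{I}_Z^2)=\mathbb{P}(p^*E\otimes\mathcal{O}_{\mathcal{P}}(-1)|_Z)=\mathbb{P}(E)\times\mathbb{P}^{r-1}$, matching $D$), and your second fallback, trivializing $E$ on an open cover of $X$ and checking the isomorphism there, is legitimate since being an isomorphism is local on the target and blow-up commutes with restriction to opens; that fallback is in spirit the same reduction to the absolute case that the paper performs fiberwise. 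What each approach buys: the paper's is shorter because ESB does the recognition work; yours is more self-contained, avoids ESB entirely, and makes the identification of the exceptional divisor and of the ideal of the center explicit rather than implicit.
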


\begin{proof}
Let $ \pi: \mathbb{P}(E) \longrightarrow X$ be the projection. We have $ \pi_{\ast}(\mathcal{O}_{\mathbb{P}(E)}^{r} \oplus \mathcal{O}_{\mathbb{P}(E)}(1))= \mathcal{O}_{X}^{r}\oplus E$ and $ \pi_{\ast}\mathcal{O}_{\mathbb{P}(E)}^{r} = \mathcal{O}_{X}^{r}$  This gives the following commutative diagram:  
\begin{center}
\begin{tikzcd}
\mathbb{P}(E)\times \mathbb{P}^{r-1} \arrow[r, "\pi \times id"] \arrow[d, hook]
& X\times \mathbb{P}^{r-1} \arrow[d, hook ] \\
\mathbb{P}_{\mathbb{P}(E)}(\mathcal{O}_{\mathbb{P}(E)}^r \oplus \mathcal{O}_{\mathbb{P}(E)}(1)) \arrow[r, "\tilde{\pi}" ] \arrow[d, ]
& |[, rotate=0]|  \mathbb{P}_{X}(\mathcal{O}_{X}^{r}\oplus E) \arrow[d, ]\\
\mathbb{P}(E) \arrow[r, "\pi"] & X.
\end{tikzcd}
\end{center}

Under $\tilde{\pi}$, the line bundle $\mathcal{O}_{\mathbb{P}(\mathcal{O}_X^{r}\oplus E)}(1)$ pulls back to $\mathcal{O}_{\mathbb{P}(\mathcal{O}_Y^{r}\oplus \mathcal{O}_Y(1) )}(1)$, where $Y=\mathbb{P}(E).$ We want to apply  {\cite[Theorem 1.1]{ESB}}, to show that $ \tilde{\pi}$ is the blow-up along $ X\times \mathbb{P}^{r-1}$. It suffices to show for $ z \in \mathbb{P}_{X}(\mathcal{O}_{X}^{r} \oplus E)$ that:

$(i)$ $ \tilde{\pi}^{-1}(z)$ is a point for $ z \notin X \times \mathbb{P}^{r-1}$.

$ (ii)$ $ \tilde{\pi}^{-1}(z) \subseteq \mathbb{P}(E) \times \mathbb{P}^{r-1}$ for  $ z \in X \times \mathbb{P}^{r-1}$.

It suffices to check conditions $(i)$ and $ (ii)$ over each point of $X$. Over each point of $X$, $\tilde{\pi} $ is a blow-up of a projective space along a linear subvariety, and thus $(i)$ and $ (ii)$ follows. Note that $(ii)$ implies the top square in the commutative diagram is a fibre square, so the statement about the exceptional divisor follows.    
\end{proof}

\vspace{10pt}

\begin{lemma}\label{theorem:theorem key}
Let $E$ be a vector bundle over $\mathbb{P}^{n}$, and let  $\mathcal{O}_{\mathbb{P}^n}^{m+1} \oplus \mathcal{O}_{\mathbb{P}^n}(1) \xlongrightarrow{\phi} E
$ be a surjection such that $\phi(\mathcal{O}_{\mathbb{P}^n}(1)) \neq 0$. Then there exist a morphism $ \mathbb{P}_{\mathbb{P}^n}(\mathcal{O}_{\mathbb{P}^n}^{m+1} \oplus \mathcal{O}_{\mathbb{P}^n}(1))\xlongrightarrow{p}\mathbb{P}^{n+m+1}$, which is blow-up along an $m-$dimensional linear subspace $L$. Then morphism $\pi:\mathbb{P}_{\mathbb{P}^n}(E) \hookrightarrow \mathbb{P}_{\mathbb{P}^n}(\mathcal{O}_{\mathbb{P}^n}^{m+1} \oplus \mathcal{O}_{\mathbb{P}^n}(1)) \xlongrightarrow{p}\mathbb{P}^{n+m+1} $ is the blow-up of $ \pi (\mathbb{P}_{\mathbb{P}^n}(E))$ along $ \pi(\mathbb{P}_{\mathbb{P}^n}(E)) \cap L $.

\end{lemma}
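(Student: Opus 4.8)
The plan is to reduce everything to the blow-up structure on the ambient bundle and then recognize $\mathbb{P}_{\mathbb{P}^n}(E)$ as a strict transform. First I would produce the morphism $p$: applying the quoted fact \cite[Proposition 9.11]{EH} with $d=n$ and $r=m+1$ identifies $\tilde{P}:=\mathbb{P}_{\mathbb{P}^n}(\mathcal{O}_{\mathbb{P}^n}^{m+1}\oplus\mathcal{O}_{\mathbb{P}^n}(1))$ with $\mathrm{Bl}_L\,\mathbb{P}^{n+m+1}$, where $L\cong\mathbb{P}^m$ is a linear subspace of dimension $m=r-1$. Let $p$ be the blow-down and $E_p$ its exceptional divisor; recall that $p$ is an isomorphism over $U:=\mathbb{P}^{n+m+1}\setminus L$ while $E_p=p^{-1}(L)$. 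Comparing fibre dimensions over $\mathbb{P}^n$ shows that $E_p$ is exactly the trivial sub-projective-bundle $\mathbb{P}_{\mathbb{P}^n}(\mathcal{O}_{\mathbb{P}^n}^{m+1})$, which arises (in Hartshorne's convention) from the projection $q:\mathcal{O}_{\mathbb{P}^n}^{m+1}\oplus\mathcal{O}_{\mathbb{P}^n}(1)\twoheadrightarrow\mathcal{O}_{\mathbb{P}^n}^{m+1}$, whose kernel is $\ker q=\mathcal{O}_{\mathbb{P}^n}(1)$.

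Next, the surjection $\phi$ gives the closed immersion $W:=\mathbb{P}_{\mathbb{P}^n}(E)\hookrightarrow\tilde{P}$ compatibly with $\mathcal{O}(1)$, so $\pi=p|_W$ is defined. The key point -- and the place where the hypothesis $\phi(\mathcal{O}_{\mathbb{P}^n}(1))\neq0$ is used -- is that $W\not\subseteq E_p$. Indeed, $W\subseteq E_p$ would force the quotient $\phi$ to factor through $q$, i.e. $\phi$ would vanish on $\ker q=\mathcal{O}_{\mathbb{P}^n}(1)$, contrary to hypothesis. Since $W$ is irreducible and $W\not\subseteq E_p$, the open set $W\setminus E_p=W\cap p^{-1}(U)$ is dense in $W$; because $p$ is an isomorphism over $U$, it maps isomorphically onto $S\setminus L$, where $S:=\pi(W)=p(W)$. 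In particular $S\not\subseteq L$ (otherwise $W\subseteq p^{-1}(L)=E_p$), and $\pi:W\to S$ is birational.

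I would then identify $W$ as the strict transform of $S$ in $\tilde{P}$. From the isomorphism over $U$ one gets the set-theoretic equality $p^{-1}(S\setminus L)=W\setminus E_p$ (both are the copy of $S\cap U$ sitting inside $p^{-1}(U)\cong U$), so by irreducibility the strict transform $\overline{p^{-1}(S\setminus L)}$ equals $\overline{W\setminus E_p}=W$. Now I invoke the standard behaviour of strict transforms under blow-up (\cite[Chapter II, Corollary 7.15]{Ha}): the strict transform of a closed subscheme $S\subseteq\mathbb{P}^{n+m+1}$ is canonically the blow-up of $S$ along the inverse-image ideal sheaf $\mathcal{I}_L\cdot\mathcal{O}_S$, which is precisely the ideal of the scheme-theoretic intersection $S\cap L$. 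Hence $\pi:W\to S$ is identified with $\mathrm{Bl}_{S\cap L}(S)$, with exceptional divisor $W\cap E_p$, as claimed.

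The main obstacle I anticipate is bookkeeping rather than conceptual: verifying the equality $p^{-1}(S\setminus L)=W\setminus E_p$ cleanly, and confirming that the ideal $\mathcal{I}_L\cdot\mathcal{O}_S$ produced by Corollary 7.15 is genuinely the scheme-theoretic intersection structure on $S\cap L$ (so that the statement holds on the nose, not merely up to nilpotents). A secondary point to spell out is the convention-dependent equivalence $W\subseteq E_p\iff\phi|_{\mathcal{O}_{\mathbb{P}^n}(1)}=0$, where Hartshorne's quotient convention for $\mathbb{P}(\cdot)$ and the compatibility of the tautological $\mathcal{O}(1)$'s under $\phi$ must be used carefully. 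Notably no smoothness of $S$ is required, which matters because in the applications $S$ is typically a singular hypersurface. Alternatively, to stay parallel with Lemma \ref{lemma:lemma relative} one could bypass the strict-transform theorem and apply the fibrewise criterion \cite[Theorem 1.1]{ESB} directly, checking that $\pi^{-1}(s)$ is a single point for $s\in S\setminus L$ and $\pi^{-1}(s)\subseteq W\cap E_p$ for $s\in S\cap L$.
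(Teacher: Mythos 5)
Your proposal is correct and follows essentially the same route as the paper: identify the exceptional divisor of $p$ as $\mathbb{P}_{\mathbb{P}^n}(\mathcal{O}_{\mathbb{P}^n}^{m+1})$, use the hypothesis $\phi(\mathcal{O}_{\mathbb{P}^n}(1))\neq 0$ to conclude that $\mathbb{P}_{\mathbb{P}^n}(E)$ is not contained in it, and then recognize $\mathbb{P}_{\mathbb{P}^n}(E)$ as the strict transform of its image, hence the blow-up of that image along its intersection with $L$. The only difference is that you spell out the density and strict-transform bookkeeping (and note the alternative via the Ein--Shepherd-Barron criterion) that the paper leaves implicit.
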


\begin{proof}
By Lemma \ref{lemma:lemma relative}, with $X$ a point, the exceptional divisor of $p$, i.e., Ex(p) = $\mathbb{P}_{\mathbb{P}^n}( \mathcal{O}_{\mathbb{P}^n}^{m+1}) \hookrightarrow\mathbb{P}_{\mathbb{P}^n}(\mathcal{O}_{\mathbb{P}^n}^{m+1} \oplus \mathcal{O}_{\mathbb{P}^n}(1)) $. Since $ \phi(\mathcal{O}_{\mathbb{P}^n}(1)) \neq 0$, $\phi $ does not factor through $ \mathcal{O}_{\mathbb{P}^n}^{m+1} \oplus \mathcal{O}_{\mathbb{P}^n}(1) \longrightarrow \mathcal{O}_{\mathbb{P}^n}^{m+1}$. This implies that $\mathbb{P}_{\mathbb{P}^n}(E)$ is not contained in Ex(p). Therefore, $ \pi $ identifies $ \mathbb{P}_{\mathbb{P}^n}(E)$ with the strict transform of $ \pi(\mathbb{P}_{\mathbb{P}^n}(E))$ under the blow-up $p$, i.e., the blow-up of $\pi(\mathbb{P}_{\mathbb{P}^n}(E))$ along $ \pi(\mathbb{P}_{\mathbb{P}^n}(E)) \cap L$.
\end{proof}
\hspace{10pt} We shall need the following observation.
\begin{lemma} \label{theorem:sumtriv}
Let $E$ be a vector bundle over $\mathbb{P}^{n}$, and let $ \phi: \mathcal{O}_{\mathbb{P}^n}^{N+1} \longrightarrow E $ be a surjection, inducing $ \pi: \mathbb{P}_{\mathbb{P}^n}(E)\hookrightarrow \mathbb{P}^{n} \times \mathbb{P}^{N} \longrightarrow \mathbb{P}^{N}$. Define
$\tilde{\pi}: \mathbb{P}_{\mathbb{P}^n}(\mathcal{O}_{\mathbb{P}^n} \oplus E) \hookrightarrow \mathbb{P}^n \times \mathbb{P}^{N+1} \longrightarrow \mathbb{P}^{N+1}$ as the map induced by the surjection $\textnormal{Id}\oplus \phi: \mathcal{O}_{\mathbb{P}^n}^{N+2} \longrightarrow \mathcal{O}_{\mathbb{P}^n}\oplus E$. We have $\emph{Im}\, \tilde{\pi} = C (\emph{Im} \,\pi)$, and $ \tilde{\pi}^{-1}([t: \underline{x}])\cong \pi^{-1}([\underline{x}])$ for all $[t:\underline{x}]\in\mathbb{P}^{N+1}$ with $\underline{x}\neq 0$, and $ \tilde{\pi}^{-1}([1: \underline{0}])\cong\mathbb{P}^n$ . Here, for a variety $Z$ in $\mathbb{P}^N$, $C(Z)$ is the variety in $\mathbb{P}^{N+1}$ defined as the cone over $Z$ with vertex $[1:0:0...:0].$
 \end{lemma}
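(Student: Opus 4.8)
The plan is to reduce everything to two explicit formulas for $\pi$ and $\tilde\pi$ on points, obtained by unwinding the convention of \cite[Chapter 2, Section 7]{Ha}. Over a closed point $x\in\mathbb{P}^n$, a point of $\mathbb{P}_{\mathbb{P}^n}(E)$ lying over $x$ is a one-dimensional quotient $q\colon E_x\to k$ (up to scalar), and $\phi$ sends it to the point of $\mathbb{P}^N$ given by the composite functional $q\circ\phi_x\colon k^{N+1}\to k$; thus $\pi(x,[q])=[q\circ\phi_x]$. Since $\phi_x$ is surjective, $q\circ\phi_x=0$ iff $q=0$, so this is well defined. Running the same computation for $\textnormal{Id}\oplus\phi$, a point over $x$ of $\mathbb{P}_{\mathbb{P}^n}(\mathcal{O}\oplus E)$ is a one-dimensional quotient $(c,q)\colon k\oplus E_x\to k$ with $(c,q)\neq 0$, and one gets $\tilde\pi(x,[(c,q)])=[\,c : q\circ\phi_x\,]$, where the first coordinate records the extra $\mathcal{O}$-summand, i.e. the direction of the cone vertex $v=[1:0:\cdots:0]$. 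Getting these two formulas right, including the placement of the $\mathcal{O}$-summand, is the one genuinely convention-sensitive step; everything else follows formally.

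With the formulas in hand the statements about $\textnormal{Im}\,\tilde\pi$ and the fiber over $v$ are immediate. If $\tilde\pi(x,[(c,q)])=v$ then $q\circ\phi_x=0$, hence $q=0$ and $c\neq 0$; so $\tilde\pi^{-1}(v)$ is exactly the section $\Sigma_0=\{(x,[(1,0)])\}\cong\mathbb{P}^n$ attached to the sub-line-bundle $\mathcal{O}\hookrightarrow\mathcal{O}\oplus E$, which proves the last assertion. For the image, when $q\neq 0$ we have $q\circ\phi_x\neq 0$, so $[q\circ\phi_x]\in\textnormal{Im}\,\pi$ and $[c:q\circ\phi_x]$ runs, as $c$ varies, over the whole line joining $v$ to $[q\circ\phi_x]$; when $q=0$ the image is $v$ itself. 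Hence $\textnormal{Im}\,\tilde\pi$ is the union of $v$ with all lines through $v$ and points of $\textnormal{Im}\,\pi$, which is precisely $C(\textnormal{Im}\,\pi)$.

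The only step needing care is the fiber isomorphism $\tilde\pi^{-1}([t:\underline{x}])\cong\pi^{-1}([\underline{x}])$ for $\underline{x}\neq 0$, since the formulas give an evident bijection that must be promoted to an isomorphism of varieties; this is the main obstacle, and I would resolve it structurally rather than by coordinates. The quotient $\mathcal{O}\oplus E\twoheadrightarrow E$ induces a morphism $\rho\colon U\to\mathbb{P}(E)$ on $U:=\mathbb{P}_{\mathbb{P}^n}(\mathcal{O}\oplus E)\setminus\Sigma_0$ (exactly the locus $q\neq 0$), realizing $U$ as an affine-line bundle over $\mathbb{P}(E)$, while linear projection from the vertex realizes $\mathbb{P}^{N+1}\setminus\{v\}$ as an affine-line bundle $pr$ over $\mathbb{P}^N$. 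The formulas show $\tilde\pi(U)\subseteq\mathbb{P}^{N+1}\setminus\{v\}$, that $pr\circ\tilde\pi=\pi\circ\rho$ on $U$, and that $\tilde\pi$ carries each $\rho$-fiber $\{(x,[(c,q)]):c\in k\}$ isomorphically onto the $pr$-fiber over $\pi(x,[q])$ (the map $c\mapsto[c:q\circ\phi_x]$ is affine-linear of degree one). Thus $(\rho,\tilde\pi)$ identifies $U$ with the fiber product $\mathbb{P}(E)\times_{\mathbb{P}^N}(\mathbb{P}^{N+1}\setminus\{v\})$, and restricting to a single point $p=[t:\underline{x}]$ with $pr(p)=[\underline{x}]$ yields $\tilde\pi^{-1}(p)\cong\pi^{-1}([\underline{x}])\times_{\{[\underline{x}]\}}\{p\}=\pi^{-1}([\underline{x}])$, as required. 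Beyond this isomorphism the only other place demanding attention is the preliminary bookkeeping with the convention of \cite{Ha}, ensuring $\pi$ and $\tilde\pi$ are genuine morphisms and that the vertex lies in the $\mathcal{O}$-direction; this is routine but must be done carefully.
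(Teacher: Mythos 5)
Your proof is correct, and the pointwise formulas $\pi(x,[q])=[q\circ\phi_x]$ and $\tilde\pi(x,[(c,q)])=[c:q\circ\phi_x]$ that you start from agree with the paper's description of the maps (the paper phrases the same data dually, identifying $E_{\underline{\alpha}}^{*}$ with a subspace of $k^{N+1}$ via $\phi^{*}$ and describing fibers by the incidence condition $\underline{x}\in E_{\underline{\alpha}}^{*}$). Where you genuinely diverge is in how the fiber bijection is promoted to an isomorphism of varieties. The paper's device is to note that each fiber of $\pi$ or $\tilde\pi$ lies in a single slice $\mathbb{P}^n\times\{\mathrm{pt}\}$ and is therefore carried isomorphically by the projection $f$ to $\mathbb{P}^n$ onto its image; the images $f\tilde\pi^{-1}([t:\underline{x}])$ and $f\pi^{-1}([\underline{x}])$ are then literally the same closed subset $\{\underline{\alpha}\mid \underline{x}\in E_{\underline{\alpha}}^{*}\}$ of $\mathbb{P}^n$, which settles the isomorphism in one line. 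You instead globalize: you exhibit $\mathbb{P}_{\mathbb{P}^n}(\mathcal{O}\oplus E)\setminus\Sigma_0$ as the fiber product $\mathbb{P}(E)\times_{\mathbb{P}^N}(\mathbb{P}^{N+1}\setminus\{v\})$ of the two affine-line bundles $\rho$ and $pr$, and read off the fiber isomorphism by base change. Both arguments are sound; the paper's is shorter and sidesteps the (routine but nonzero) verification that a fiberwise degree-one affine-linear map of $\mathbb{A}^1$-bundles over a common base is an isomorphism, while yours yields the stronger structural statement that $\tilde\pi$ away from the section is literally a base change of $\pi$, which makes the compatibility of the two maps transparent and extends immediately to $\mathcal{O}^{r}\oplus E$ (as used in Remark \ref{remark:remark flop}).
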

 
\begin{proof}
 Let $ \mathbb{P}^n \times \mathbb{P}^{N+1} \xlongrightarrow{f} \mathbb{P}^n $  be the projection map. For any $\alpha\in\mathbb{P}^n$, we identify $E_{\underline{\alpha}}^*$ as a subspace of $k^{N+1}$ via $\phi^*$.  For any $[t: \underline{x}] \in \mathbb{P}^{N+1}$, we have
 $$\tilde{\pi}^{-1}([t :\underline{x}]) \cong f \tilde{\pi}^{-1}( [t:\underline{x}])= \{ \underline{\alpha}\in \mathbb{P}^n\, |\, k\oplus E_{\underline{\alpha}}^{\ast} \ni (t,\underline{x}) \} $$ 
 
 \begin{equation*}
=\{ \underline{\alpha} \in \mathbb{P}^n \medspace | \medspace E_{\underline{\alpha}} ^{\ast} \ni \underline{x} \} = 
\begin{cases}
f(\pi^{-1}([\underline{x}]))\cong\pi^{-1}([\underline{x}]) , & \quad \textnormal{if}\quad \underline{x} \neq 0, \\
\mathbb{P}^{n}, & \quad\textnormal{if}\quad \underline{x} = 0.
\end{cases}
\end{equation*}
So, if $\underline{x}\neq 0$
$$ \tilde{\pi}^{-1}[t:\underline{x}] \neq  \varnothing \Leftrightarrow \pi^{-1}([\underline{x}]) \neq \varnothing  \Leftrightarrow [\underline{x}] \in \text{Im}\,\pi  \Leftrightarrow [t: \underline{x}] \in C (\text{Im}\, \pi).$$
Hence, we conclude that $ \text{Im} \,\tilde{\pi} = C(\text{Im}\, \pi)$.
\end{proof}

\begin{lemma}\label{theorem:theorem E}
Let $X$ be a smooth projective variety, and let $ Z\hookrightarrow X$ be a smooth subvariety of codimension greater than equal to 2 in $X$. Let $ \Tilde{X}\longrightarrow X $ denote the blow-up of $X$ along  $Z$, with $ E\hookrightarrow \tilde{X}$ the exceptional divisor. Then $ \mathcal{O}_{\Tilde{X}}(E)$ is not big.
\end{lemma}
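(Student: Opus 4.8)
The plan is to prove the sharper statement that
\[
h^0\bigl(\tilde X,\mathcal O_{\tilde X}(mE)\bigr)=1\qquad\text{for every }m\ge 0,
\]
and then observe that a line bundle whose spaces of sections stay bounded cannot be big. Recall that $L$ is big precisely when its Iitaka dimension equals $\dim\tilde X$, equivalently when $h^0(\tilde X,\mathcal O_{\tilde X}(mL))$ grows like $m^{\dim\tilde X}$; since $Z$ has codimension $\ge 2$ we have $\dim\tilde X=\dim X\ge 2$, so a constant sequence $h^0(mE)=1$ gives Iitaka dimension $0<\dim\tilde X$, whence $\mathcal O_{\tilde X}(E)$ is not big. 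Thus everything reduces to the cohomological computation.

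For that computation, the key point is the identification $\pi_*\mathcal O_{\tilde X}(mE)=\mathcal O_X$, where $\pi:\tilde X\to X$ is the blow-up. First I would note that there is always at least one section, namely the $m$-th power $s_E^{\,m}$ of the tautological section cutting out $E$, so $h^0\ge 1$ and $\mathcal O_X=\pi_*\mathcal O_{\tilde X}\hookrightarrow\pi_*\mathcal O_{\tilde X}(mE)$. For the reverse inequality, since $\pi$ is birational we identify $K(\tilde X)=K(X)$, and a global section of $\mathcal O_{\tilde X}(mE)$ is a rational function $f$ with $\operatorname{div}_{\tilde X}(f)+mE\ge 0$. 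Over the isomorphism locus $\tilde X\setminus E\cong X\setminus Z$ the divisor $mE$ is trivial, so $f$ is a regular function on $X\setminus Z$. Because $X$ is smooth (hence normal, so $S_2$) and $Z\subset X$ has codimension $\ge 2$, Hartogs' extension theorem forces $f$ to extend to a regular function on all of $X$; as $X$ is proper and integral, $f\in H^0(X,\mathcal O_X)=k$. Conversely every constant clearly lies in $H^0(\tilde X,\mathcal O_{\tilde X}(mE))$ via $s_E^{\,m}$, so the space of sections is exactly the constants and $h^0(mE)=1$, independent of $m$.

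The main (and essentially only) subtlety is the extension step: it is exactly here that the hypothesis $\operatorname{codim}_X Z\ge 2$ enters, guaranteeing that regular functions across the contracted locus $\pi(E)=Z$ extend by normality, so that no genuinely new sections appear as $m$ grows. Once $\pi_*\mathcal O_{\tilde X}(mE)=\mathcal O_X$ is established, the projection formula (or simply $H^0(\tilde X,\mathcal O_{\tilde X}(mE))=H^0(X,\pi_*\mathcal O_{\tilde X}(mE))$) closes the argument and the failure of bigness follows immediately from the bounded growth of $h^0$. I do not expect any serious obstacle beyond correctly invoking normality and codimension $\ge 2$; in particular no intersection-theoretic estimate of $E^{\dim\tilde X}$ is needed, which would be inconclusive since $E$ is not nef.
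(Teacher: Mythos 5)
Your proposal is correct and follows essentially the same route as the paper: both reduce the claim to the identity $h^0(\tilde X,\mathcal O_{\tilde X}(mE))=h^0(X,\mathcal O_X)=1$ for all $m\geq 0$, which the paper obtains by citing \cite[Lemma 7.11]{De} and which you reprove directly via the Hartogs/normality extension argument across the codimension $\geq 2$ locus. The only difference is that you supply the proof of the cited lemma rather than invoking it.
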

\begin{proof}
By \cite[Lemma 7.11]{De}, we have $ h^{0}(\Tilde{X}, \mathcal{O}_{\tilde{X}}(mE))= h^{0}(X, \mathcal{O}_{X})=1$ for all integers $ m\geq 0$. This implies  that $ \mathcal{O}_{\Tilde{X}}(E)$ cannot be big.
\end{proof}

\hspace{10pt}For a smooth projective variety $X$, let {Nef}$(X)$ and $\overline{\textnormal{Eff}}^{1}(X)$ denote the nef and pseudoeffective cones of $X$, respectively. For $a,b\in \textnormal{N}^{1}(X)_{\mathbb{R}}$, let $<a,b>$ denote the cone generated by $a,b$.

\hspace{10pt}Let $ n\geq 2$, and let $E$ be a globally generated vector bundle on $ \mathbb{P}^n$ of rank $ r(\geq 2)$. Consider the projection $ \mathbb{P}_{\mathbb{P}^{n}}(E)\xlongrightarrow{p} \mathbb{P}^n $, and let $H= p^{\ast}(\mathcal{O}_{\mathbb{P}^n}(1)).$  Then $H$ is nef but not big, so one ray in $ \overline{\textnormal{Eff}}^{1}(\mathbb{P}_{\mathbb{P}^{n}}(E))$ is generated by $H$. Since $ \mathcal{O}_{\mathbb{P}(E)}(1)$ is globally generated, it is nef. Therefore, the other extremal ray of $ \overline{\textnormal{Eff}}^{1}(\mathbb{P}(E))$ will be generated by $ \mathcal{O}_{\mathbb{P}(E)}(1)- c H$ for some real number $c\geq 0$.

\begin{lemma}\label{theorem:theorem osum}
In above notation, suppose $ \overline{\textnormal{Eff}}^{1}(\mathbb{P}_{\mathbb{P}^{n}}(E))=<H, \mathcal{O}_{\mathbb{P}(E)}(1)- c H >$. Then $ \overline{\textnormal{Eff}}^{1}(\mathbb{P}_{\mathbb{P}^{n}}(\mathcal{O}_{\mathbb{P}^n} \oplus E))=<H, \mathcal{O}_{\mathbb{P}(\mathcal{O}_{\mathbb{P}^n} \oplus E)}(1)- cH>$. Here, by abuse of notations, $H$ also denotes the pullback of $\mathcal{O}_{\mathbb{P}^n}(1) $ on $ \mathbb{P}_{\mathbb{P}^{n}}(\mathcal{O}_{\mathbb{P}^n} \oplus E).$
\end{lemma}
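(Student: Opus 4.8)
The plan is to show that adding a trivial summand does not change the slope $c$ of the second extremal ray. Write $\xi=\mathcal{O}_{\mathbb{P}(E)}(1)$ and $\xi'=\mathcal{O}_{\mathbb{P}(\mathcal{O}_{\mathbb{P}^n}\oplus E)}(1)$. As in the discussion preceding the statement, $\overline{\textnormal{Eff}}^{1}(\mathbb{P}_{\mathbb{P}^n}(\mathcal{O}_{\mathbb{P}^n}\oplus E))$ is two-dimensional with one ray spanned by $H$ (again nef but not big, since $H$ is pulled back from $\mathbb{P}^n$), so it suffices to prove that $c'=\sup\{t\ge 0:\ \xi'-tH\in\overline{\textnormal{Eff}}^{1}\}$ equals $c$. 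The first structural input I would record is that the exact sequence $0\to\mathcal{O}_{\mathbb{P}^n}\to\mathcal{O}_{\mathbb{P}^n}\oplus E\to E\to 0$ exhibits $D:=\mathbb{P}(E)$ as an effective divisor in $|\xi'|$: the composite $\mathcal{O}_{\mathbb{P}(\mathcal{O}\oplus E)}\hookrightarrow p'^{*}(\mathcal{O}_{\mathbb{P}^n}\oplus E)\twoheadrightarrow\xi'$ is a section of $\xi'$ whose zero locus is $\mathbb{P}(E)$. Consequently the restriction $N^{1}(\mathbb{P}(\mathcal{O}\oplus E))_{\mathbb{R}}\to N^{1}(D)_{\mathbb{R}}$ is an isomorphism sending $\xi'\mapsto\xi$ and $H\mapsto H$. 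The complementary surjection $\mathcal{O}_{\mathbb{P}^n}\oplus E\to\mathcal{O}_{\mathbb{P}^n}$ gives a section $\sigma\cong\mathbb{P}^n$ of codimension $r\ge 2$, disjoint from $D$.

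For the inequality $c'\le c$ I would restrict to $D$. If $\xi'-tH$ is pseudoeffective, write it as a limit of classes of effective $\mathbb{Q}$-divisors $\Delta_m$ and restrict each to $D$. A component of $\Delta_m$ equal to $D$ contributes the normal-bundle class $\xi'|_{D}=\xi$, which is nef and hence pseudoeffective, while every other prime component meets $D$ properly and restricts to a genuine effective divisor on $D$; thus each $[\Delta_m]|_{D}$ is pseudoeffective, and passing to the limit shows $\xi-tH=(\xi'-tH)|_{D}$ is pseudoeffective on $\mathbb{P}(E)$, forcing $t\le c$.

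For the reverse inequality $c'\ge c$ I would use the relative projection $\rho:\mathbb{P}(\mathcal{O}\oplus E)\dashrightarrow\mathbb{P}(E)$ away from $\sigma$, which on each fibre is the linear projection $\mathbb{P}^{r}\dashrightarrow\mathbb{P}^{r-1}$ from the point $[\mathcal{O}]$. Blowing up $\sigma$ resolves $\rho$ into a morphism $q:Z=\textnormal{Bl}_{\sigma}\mathbb{P}(\mathcal{O}\oplus E)\to\mathbb{P}(E)$, with blow-down $\beta:Z\to\mathbb{P}(\mathcal{O}\oplus E)$ and exceptional divisor $F$. Since $\sigma\cap D=\varnothing$, the strict transform $\widetilde{D}$ is isomorphic to $D$ and maps isomorphically onto $\mathbb{P}(E)$ under $q$. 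I would then observe $\beta^{*}H=q^{*}H$ and, because $D$ avoids $\sigma$, $\beta^{*}\xi'=[\widetilde{D}]$; writing $q^{*}\xi=\beta^{*}(a\xi'+bH)+e[F]$ and restricting to $\widetilde{D}$, where $[F]$ vanishes and both $q$ and $\beta$ are isomorphisms, pins down $a=1$, $b=0$. Hence $\beta_{*}q^{*}\xi=\xi'$ and $\beta_{*}q^{*}H=H$. Pulling back the pseudoeffective class $\xi-cH$ along the surjective morphism $q$ and pushing forward along the birational morphism $\beta$ then gives that $\beta_{*}q^{*}(\xi-cH)=\xi'-cH$ is pseudoeffective, i.e. $c'\ge c$.

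Combining the two inequalities yields $c'=c$ and hence the asserted description of $\overline{\textnormal{Eff}}^{1}(\mathbb{P}_{\mathbb{P}^n}(\mathcal{O}_{\mathbb{P}^n}\oplus E))$. I expect the reverse inequality to be the main obstacle: one must identify the pushed-forward class precisely, and the cleanest route I see is the blow-up/projection computation giving $\beta_{*}q^{*}\xi=\xi'$. The delicate point in the easy inequality is that restriction of a pseudoeffective class to the \emph{specific} divisor $D$ need not a priori be pseudoeffective, which is why recognizing the self-intersection contribution $\xi'|_{D}=\xi$ as a \emph{nef} class is essential.
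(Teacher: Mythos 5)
Your argument is correct, but it follows a genuinely different route from the paper's. The paper works entirely cohomologically: it identifies $H^{0}(\mathbb{P}(E),\mathcal{O}_{\mathbb{P}(E)}(a)\otimes H^{-b})$ with $H^{0}(\mathbb{P}^n,\mathrm{Sym}^{a}(E)(-b))$, uses the splitting $\mathrm{Sym}^{da}(\mathcal{O}\oplus E)=\bigoplus_{i=0}^{da}\mathrm{Sym}^{i}(E)$ to show that $a\,\mathcal{O}_{\mathbb{P}(\mathcal{O}\oplus E)}(1)-bH$ has no sections in any multiple when $b/a>c$ (hence is not big) and has sections when $b/a<c$ (hence is big), and concludes by the density of big classes in the pseudoeffective cone. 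You instead argue geometrically: for $c'\le c$ you restrict to the divisor $D=\mathbb{P}(E)\in|\xi'|$, correctly isolating the self-intersection term $a_mD|_D=a_m\xi$ as a nef class so that the restriction of each approximating effective divisor stays pseudoeffective; for $c'\ge c$ you resolve the projection from the disjoint section $\sigma=\mathbb{P}(\mathcal{O})$ by blowing it up and compute $\beta_*q^*\xi=\xi'$, $\beta_*q^*H=H$ (the identification $Z\cong\mathbb{P}_{\mathbb{P}(E)}(\mathcal{O}\oplus\mathcal{O}_{\mathbb{P}(E)}(1))$ is exactly the paper's Lemma \ref{lemma:lemma relative}, which also confirms your coefficient computation $a=1$, $b=0$ directly). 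Both proofs are complete; the paper's is shorter given that the $H^0$ computation is reused elsewhere in \S 7 and it additionally records which classes are effective rather than merely pseudoeffective, while yours is more robust in that it does not rely on the splitting of symmetric powers of a direct sum and would adapt, for instance, to quotients or twists where that splitting is unavailable. The one point you should make explicit if you write this up is the standard fact that pullback under a surjective morphism and pushforward under a birational morphism both preserve pseudoeffectivity, which you invoke in the second half.
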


\begin{proof}
    For $ a,b \in \mathbb{N},$ note that
    $$ H^{0}(\mathbb{P}_{\mathbb{P}^{n}}(E), \mathcal{O}_{\mathbb{P}(E)}(1)^{a}\otimes H^{-b})= H^{0}(\mathbb{P}_{\mathbb{P}^{n}}(E), p^{\ast} \mathcal{O}_{\mathbb{P}^n}(-b) \otimes \mathcal{O}_{\mathbb{P}(E)}(a))$$
\hspace{193pt} $ = H^{0}(\mathbb{P}^n, p_{\ast}(p^{\ast}\mathcal{O}_{\mathbb{P}^{n}}(-b) \otimes \mathcal{O}_{\mathbb{P}(E)}(a) ))$

\vspace{2pt}
\hspace{193pt} $ = H^{0}(\mathbb{P}^n, p_{\ast}(\mathcal{O}_{\mathbb{P}(E)}(a))(-b))$
\vspace{2pt}

\hspace{193pt} $= H^{0}(\mathbb{P}^n, \textrm{Sym}^{a}(E)(-b)).$

Similarly, we have 

$$ H^{0}(\mathbb{P}(\mathcal{O}_{\mathbb{P}^n} \oplus E), \mathcal{O}_{\mathbb{P}(\mathcal{O}_{\mathbb{P}^{n}}\oplus E)}(1)^{a}\otimes H^{-b})= H^{0}(\mathbb{P}^n, \textrm{Sym}^{a}(\mathcal{O}_{\mathbb{P}^n}\oplus E)(-b)) .$$

Let $a,b\in \mathbb{N}$ with $ \frac{b}{a}>c$, so  for all $ d\in \mathbb{N}$, $ d \big(a\cdot \mathcal{O}_{\mathbb{P}(E)}(1)- b H \big) \notin \overline{\textnormal{Eff}}^{1}(\mathbb{P}_{\mathbb{P}^{n}}(E))$. Hence, we have 
$H^{0}(\mathbb{P}_{\mathbb{P}^{n}}(E), \mathcal{O}_{\mathbb{P}(E)}(1)^{da}\otimes H^{-db})=0$, so $H^{0}(\mathbb{P}^n, \textrm{Sym}^{da}(E)(-db))=0 .$

Note that

$$ \textrm{Sym}^{da}(\mathcal{O}_{\mathbb{P}^n}\oplus E) = \bigoplus_{i=0}^{da} \textrm{Sym}^{i}(E),$$ so we get 
$$H^{0}(\mathbb{P}(\mathcal{O}_{\mathbb{P}^n} \oplus E), \mathcal{O}_{\mathbb{P}(\mathcal{O}_{\mathbb{P}^{n}}\oplus E)}(1)^{da}\otimes H^{-db})= H^{0}(\mathbb{P}^n, \textrm{Sym}^{da}(\mathcal{O}_{\mathbb{P}^n}\oplus E)(-d b)) $$
\hspace{232pt}$= \bigoplus_{i=0}^{da} H^{0}(\mathbb{P}^n, \textrm{Sym}^{i}(E)(-db))$
    
\vspace{2pt}
    \hspace{232pt}$ = \bigoplus_{i=0}^{da} H^{0}(\mathbb{P}_{\mathbb{P}^{n}}(E), \mathcal{O}_{\mathbb{P}(E)}(1)^{i}\otimes H^{-db})$
    
\vspace{2pt}
    \hspace{232pt}$ =0$

as $i \cdot \big( \mathcal{O}_{\mathbb{P}(E)}(1) - db \cdot H \big)$ is not effective, either $ i=0$ or for $ \frac{db}{i} \geq \frac{db}{da}= \frac{b}{a}>c.$ So $ a\mathcal{O}_{\mathbb{P}(\mathcal{O}_{\mathbb{P}^n}\oplus E)}(1) - b H$  is not big whenever $ \frac{b}{a}>c$. Hence,
    $$ \overline{\textnormal{Eff}}^{1}(\mathbb{P}_{\mathbb{P}^{n}}(\mathcal{O}_{\mathbb{P}^n} \oplus E)) \subset \langle H, \mathcal{O}_{\mathbb{P}(\mathcal{O}_{\mathbb{P}^n} \oplus E)}(1)- cH \rangle $$If $c=0$, then the lemma follows. So now assume $c>0$ and let $ a,b \in \mathbb{N} $ with $ \frac{b}{a}< c$.  As before, for all  $d \in \mathbb{N} $ we have

 \hspace{15pt}   $H^{0}(\mathbb{P}_{\mathbb{P}^{n}}(\mathcal{O}_{\mathbb{P}^n} \oplus E), \mathcal{O}_{\mathbb{P}(\mathcal{O}_{\mathbb{P}^{n}}\oplus E)}(1)^{da}\otimes H^{-db})=\bigoplus_{i=0}^{da} H^{0}(\mathbb{P}_{\mathbb{P}^{n}}(E), \mathcal{O}_{\mathbb{P}(E)}(1)^{i}\otimes H^{-db})$. 
 
 So we have 
$$ \textrm{dim}\, \medspace H^{0}(\mathbb{P}_{\mathbb{P}^{n}}(\mathcal{O}_{\mathbb{P}^n} \oplus E), \mathcal{O}_{\mathbb{P}(\mathcal{O}_{\mathbb{P}^{n}}\oplus E)}(1)^{da}\otimes H^{-db}) \geq \textrm{dim}\,  \medspace H^{0}(\mathbb{P}_{\mathbb{P}^{n}}(E), \mathcal{O}_{{\mathbb{P}}(E)}(1)^{da}\otimes H^{-db})>0$$
for $d$ sufficiently large, as $ a \cdot \mathcal{O}_{\mathbb{P}(E)}(1) - b \cdot H$ is big for $ \frac{b}{a} <c$. Hence, $a \cdot \mathcal{O}_{\mathbb{P}(\mathcal{O}_{\mathbb{P}^n} \oplus E)}(1) - b \cdot H$ is pseudoeffective whenever $ \frac{b}{a} <c$. This shows that  $$ \overline{\textnormal{Eff}}^{1}(\mathbb{P}_{\mathbb{P}^{n}}(\mathcal{O}_{\mathbb{P}^n} \oplus E)) \supset \langle H, \mathcal{O}_{\mathbb{P}(\mathcal{O}_{\mathbb{P}^n} \oplus E)}(1)- cH \rangle .$$ 
Hence, 
$$  \overline{\textnormal{Eff}}^{1}(\mathbb{P}_{\mathbb{P}^{n}}(\mathcal{O}_{\mathbb{P}^n} \oplus E)) = \langle H, \mathcal{O}_{\mathbb{P}(\mathcal{O}_{\mathbb{P}^n} \oplus E)}(1)- cH \rangle.$$
\end{proof}

\section{Projective bundle as smooth blow up}

 We follow the definition of drum and the notations as in {\cite [Section 2.3]{LF}}. Let $V_{\pm}=H^0(Y_{\pm},L_{\pm})$, and $V=V_+\oplus V_-$. By definition, $X$ is a subvariety of $\mathbb{P}(V)$. Here is the main theorem of this section.

 \begin{theorem}\label{theorem:theorem drum}
Let $X$ be a smooth drum constructed upon the triple $ (Y, \mathcal{L}_{-}, \mathcal{L}_{+})$. Define the vector bundles $ \mathcal{E}_{\pm}$ on $ Y_{\pm}$  by $ \mathcal{E}_{\pm}= (p_{\pm})_*(\mathcal{L}_{\mp}).$ As in \textnormal{{\cite [Remark 2.13]{LF}}}, we have embeddings $Y_{\pm}\hookrightarrow X $. We have 

$(i)$ $ \mathbb{P}_{Y_{-}}(L_{-} \oplus \mathcal{E}_{-}) \cong \textnormal{Bl}_{Y_{+}} (X)$

$(ii)$ Suppose $s$ is a nowhere vanishing of $ L_{-}\oplus \mathcal{E}_{-}$, and define the vector bundle $F$ on $ Y_{-}$ by the exact sequence 

\begin{equation*}
0 \longrightarrow  \mathcal{O}_{Y_{-}} \xlongrightarrow{s} L_{-} \oplus \mathcal{E}_{-} \longrightarrow F \longrightarrow 0.
\end{equation*}

Then there is a hyperplane $H_s$ in $\mathbb{P}(V)$ such that $H_{s} \cap X$, $ H_{s}\cap Y_{+}$ are smooth, $Y_{+}\not\subset H_s$ and $ \mathbb{P}_{Y_{-}}(F) \cong \emph{Bl}_{H_{s}\cap Y_{+}}(H_{s} \cap X).$  
       
 \end{theorem}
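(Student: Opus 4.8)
The plan is to realize $\mathbb{P}_{Y_-}(L_-\oplus\mathcal{E}_-)$ as a morphism to $\mathbb{P}(V)$ given by a complete linear system, and then to apply the blow-up criterion \cite[Theorem 1.1]{ESB} exactly as in the proofs of Lemma~\ref{lemma:lemma relative} and Lemma~\ref{theorem:theorem key}. Write $P=\mathbb{P}_{Y_-}(L_-\oplus\mathcal{E}_-)$, let $q\colon P\to Y_-$ be the projection, and $\mathcal{O}_P(1)$ the tautological quotient. Since $q_*\mathcal{O}_P(1)=L_-\oplus\mathcal{E}_-$, one gets
$$H^0(P,\mathcal{O}_P(1))=H^0(Y_-,L_-)\oplus H^0(Y_-,\mathcal{E}_-)=V_-\oplus V_+=V,$$
where $H^0(Y_-,\mathcal{E}_-)=H^0(Y,\mathcal{L}_+)=H^0(Y_+,L_+)=V_+$ by the projection formula applied to $\mathcal{E}_-=(p_-)_*\mathcal{L}_+$ together with the fact that the drum contraction $p_+$ has projective-space fibres. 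One checks that $L_-\oplus\mathcal{E}_-$ is globally generated, so $\mathcal{O}_P(1)$ is base-point free and defines a morphism $\phi\colon P\to\mathbb{P}(V)$.

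For $(i)$ I would study $\phi$ fibrewise over $Y_-$. The sub-bundle $\mathbb{P}_{Y_-}(\mathcal{E}_-)\hookrightarrow P$ is precisely the common base locus of the sections coming from $V_-$; the map $\phi$ sends it into $\mathbb{P}(V_+)$ and, by the description of $Y_+\hookrightarrow X$ in \cite[Remark 2.13]{LF}, contracts it onto $Y_+$ through the drum projection $p_+$. Off this sub-bundle the linear projection of $\mathbb{P}(V)$ away from $\mathbb{P}(V_+)$ recovers $q$, so $\phi$ is injective there; verifying this over each point of $Y_-$ reduces to the blow-up of a projective space along a linear subspace, which is the content of Lemma~\ref{theorem:theorem key}. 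Thus the hypotheses of \cite[Theorem 1.1]{ESB} are met, and $\phi$ is the blow-up of $\mathrm{Im}\,\phi$ along $Y_+$. It then remains to identify $\mathrm{Im}\,\phi$ with the drum $X$, using that $X$ is the closure in $\mathbb{P}(V)$ of the graph of the rational projection $X\dashrightarrow Y_-$ (equivalently, its $\mathbb{C}^*$-description); this identification is the technical core of $(i)$.

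For $(ii)$, a nowhere-vanishing section $s$ of $L_-\oplus\mathcal{E}_-$ is a subbundle inclusion $\mathcal{O}_{Y_-}\hookrightarrow L_-\oplus\mathcal{E}_-$ with locally free quotient $F$, so $\mathbb{P}_{Y_-}(F)\hookrightarrow P$ is a smooth divisor, namely the zero scheme of the corresponding section $\tilde{s}\in H^0(P,\mathcal{O}_P(1))=V$. Regarding $s$ as a linear form on $V$ produces the hyperplane $H_s\subset\mathbb{P}(V)$ with $\phi^{-1}(H_s)=\mathbb{P}_{Y_-}(F)$, hence $\mathbb{P}_{Y_-}(F)=\phi^{-1}(H_s\cap X)$ scheme-theoretically. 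After checking $Y_+\not\subset H_s$, the divisor $H_s\cap X$ does not contain the centre $Y_+$, so its total transform equals its strict transform, giving $\mathbb{P}_{Y_-}(F)=\widetilde{H_s\cap X}\cong\mathrm{Bl}_{H_s\cap Y_+}(H_s\cap X)$ by the standard compatibility of strict transforms with blowing up along the scheme $Y_+\cap(H_s\cap X)=H_s\cap Y_+$.

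The hard part is the smoothness of $H_s\cap X$ and of the centre $H_s\cap Y_+$; the remaining claims of $(ii)$ follow formally once these are known. Set $c=\mathrm{codim}_X Y_+$. Away from $Y_+$ the map $\phi$ is an isomorphism, so $(H_s\cap X)\setminus Y_+$ is smooth and any singularities lie over $H_s\cap Y_+$. For $p\in H_s$ one has $\phi^{-1}(p)\subset\phi^{-1}(H_s)=\mathbb{P}_{Y_-}(F)$, so over $H_s\cap Y_+$ the smooth divisor $\mathbb{P}_{Y_-}(F)$ contains the full exceptional $\mathbb{P}^{\,c-1}$-fibres, and the restriction of $\phi$ contracts a $\mathbb{P}^{\,c-1}$-bundle over $H_s\cap Y_+$ with fibrewise normal bundle $\mathcal{O}(-1)$. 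I would then invoke the Fujiki--Nakano smooth-contraction criterion (the converse to \cite[Theorem 1.1]{ESB}) to conclude simultaneously that $H_s\cap Y_+$ and $H_s\cap X$ are smooth and that $\phi$ realizes their blow-up. Establishing smoothness of the hyperplane section $H_s\cap Y_+\subset\mathbb{P}(V_+)$ is the delicate step, which I expect to handle either by a Bertini argument exploiting the choice of $s$ or by identifying $H_s\cap Y_+$ with the $p_+$-image of the smooth zero scheme $Z(\tilde{s})\cap\mathbb{P}_{Y_-}(\mathcal{E}_-)$; this is where I anticipate the genuine difficulty.
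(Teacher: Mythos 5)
Your strategy for part (i) — mapping $P=\mathbb{P}_{Y_-}(L_-\oplus\mathcal{E}_-)$ to $\mathbb{P}(V)$ by the complete linear system of $\mathcal{O}_P(1)$ and invoking \cite[Theorem 1.1]{ESB} — is genuinely different from the paper's and is not wrong in conception, but the step you defer as ``the technical core'' (identifying $\mathrm{Im}\,\phi$ with the drum $X$) is exactly where the work lies. The paper sidesteps it: by Lemma \ref{lemma:lemma relative} the blow-up of $P$ along $Y_-$ is $\mathbb{P}_Y(\mathcal{L}_-\oplus\mathcal{L}_+)$, whose Mori cone is simplicial with exactly three elementary contractions by \cite{ORCW}; matching exceptional divisors then forces $P\cong\widetilde{Y_-}=\mathrm{Bl}_{Y_+}(X)$ with no need to analyze fibres of $\phi$ or to verify the hypotheses of \cite[Theorem 1.1]{ESB} at all. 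To complete your route you would have to prove the identification $Y\cong\mathbb{P}_{Y_-}(\mathcal{E}_-)$ with $\mathcal{O}_{\mathbb{P}(\mathcal{E}_-)}(1)=\mathcal{L}_+$ anyway, which is the same input the paper uses.

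The genuine gap is in part (ii), and you have located it yourself: the smoothness of $H_s\cap X$ and of $H_s\cap Y_+$. Your proposed tool cannot deliver it. The Fujiki--Nakano blow-down criterion takes the smoothness of the centre (equivalently, of the exceptional locus as a $\mathbb{P}^{c-1}$-bundle over a smooth base) as a \emph{hypothesis} and produces the smooth blow-down as output; it cannot ``conclude simultaneously'' that $H_s\cap Y_+$ is smooth. Your two fallbacks also do not close the argument: Bertini is unavailable because $s$ is an arbitrary nowhere vanishing section, not a general one; and the identification of $H_s\cap Y_+$ with the $p_+$-image of $Z(\tilde s)\cap\mathbb{P}_{Y_-}(\mathcal{E}_-)$ is circular, since that intersection equals $p_+^{-1}(H_s\cap Y_+)$ (the $L_-$-component of $s$ restricts to zero on $\mathbb{P}_{Y_-}(\mathcal{E}_-)$), and the two divisors $Z(\tilde s)$ and $\mathbb{P}_{Y_-}(\mathcal{E}_-)$ need not meet transversally over the locus where $s_+$ vanishes, so its smoothness is exactly what is in question. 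The paper's resolution is different in kind: it first shows $H_s\cap X$ is normal (regular in codimension one plus $S_2$, then Serre's criterion), observes that $\pi_-|_D$ is a divisorial extremal contraction, computes the length of the contracted ray by adjunction to be $k_+=\textnormal{codim}(Y_+,X)-1$, and then applies \cite[Theorem 5.2]{AO}, which \emph{outputs} that the contraction is a smooth blow-up, hence that both $H_s\cap X$ and $H_s\cap Y_+$ are smooth. Without this length computation, or some equivalent substitute, your argument does not close.
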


    \begin{proof}
$(i)$ Let $\mathcal{E}= \mathcal{L}_{-}\oplus \mathcal{L}_{+}$, a vector bundle of rank $2$ on $Y$. By the proof of \cite[Lemma 4.4]{ORCW}, $ \phi $ is the blow-up along $ Y_{+} \bigsqcup Y_{-}$. Let $ \widetilde{Y_{\mp}} \cong \textnormal{Bl}_{Y_{\pm}}(X).$ So we have morphisms $ \mathbb{P}(\mathcal{E}) \xlongrightarrow{q_{\pm}} \widetilde{Y_{\pm}}\xlongrightarrow{\pi_{\pm}} X$, such that $ q_{\pm}$ blows down $E_{\pm}$ to subvariety of $ \widetilde{Y_{\pm}}$ isomorphic to $ Y_{\pm}$. As in {\cite [Section 4]{ORCW}}, $ \overline{\textrm{NE}}(\mathbb{P}(\mathcal{E}))$ is simplicial, so $ \mathbb{P}(\mathcal{E})$ has exactly three elementary contractions: $ \mathbb{P}(\mathcal{E} )\xlongrightarrow{q_{+}} \widetilde{Y_{+}}$, $ \mathbb{P}(\mathcal{E}) \xlongrightarrow{q_{-}} \widetilde{Y_{-}},$ and $ \mathbb{P}(\mathcal{E}) \xlongrightarrow{\pi} Y.$ It suffices to show that $ \mathbb{P}(\mathcal{E})$ is a smooth blow-up of $ \mathbb{P}_{Y_{-}}(L_{-}\oplus \mathcal{E_{-}})$ with exceptional divisor $E_{-},$ as that will imply $ \mathbb{P}_{Y}(L_{-}\oplus \mathcal{E}_{-}) \cong \widetilde{Y_{-}}.$  

If $\mathcal{E}^{'}$ is a vector bundle on $Y_{-}$ such that $ Y \cong \mathbb{P}_{Y_{-}}(\mathcal{E}^{'})$ over $Y_{-}$, then $ \mathcal{L}_{+}= \mathcal{O}_{\mathbb{P}(\mathcal{E^{'}})}(1) \otimes (p_{-})^{\ast} M$ for some line bundle $M \in \textrm{Pic}(Y_{-})$, as $\textrm{deg}(\mathcal{L}_{\mid_{F_{-}}})=1$ for a fiber $ F_{-}$ of $p_{-}$. Thus, 
$$ \mathcal{E}_{-}= (p_{-})_{\ast}(\mathcal{L}_{+})= (p_{-})_{\ast}\mathcal{O}_{\mathbb{P}(\mathcal{E^{'}})}(1) \otimes M= \mathcal{E}^{'} \otimes M. $$ So, $ \mathbb{P}_{Y_{-}}(\mathcal{E_{-}})= Y.$ If $\mathcal{O}_{\mathbb{P}(\mathcal{E_-})}(1)= \mathcal{L}_{+}\otimes(p_{-})^{\ast} M$, then applying $(p_-)_*$ we get $\mathcal{E_-}=\mathcal{E_-}\otimes M$. Taking determinant bundles, we see that some positive power of $M$ is trivial. Since the Picard group of $Y_-$ is generated by $L_-$, $M$ must be a power of the very ample line bundle $L_-$, and since some positive power of $M$ is trivial, $M$ must be trivial. Thus, $\mathcal{O}_{\mathbb{P}(\mathcal{E_-})}(1)= \mathcal{L}_{+}.$ We have the embedding $$ Y_{-}= \mathbb{P}_{Y_{-}}(\mathcal{O}_{Y}) \hookrightarrow \mathbb{P}_{Y_{-}}(\mathcal{O}_{Y}\oplus \mathcal{E}_{-}(L^{-1})) = \mathbb{P}_{Y_{-}}(L_{-}\oplus \mathcal{E}_{-}).$$ By Lemma \ref{lemma:lemma relative}, $$ \textnormal{Bl}_{Y_{-}}(\mathbb{P}_{Y_{-}}(L_{-} \oplus \mathcal{E}_{-}))= \mathbb{P}_{\mathbb{P}(\mathcal{E}_{-}(L_{-}^{-1}))}(\mathcal{O} \oplus \mathcal{O}_{\mathbb{P}(\mathcal{E}_{-}(L_{-}^{-1}))}(1)) $$ $$ \hspace{70pt}=\mathbb{P}_{\mathbb{P}(\mathcal{E_{-}})}( \mathcal{O}\oplus \mathcal{O}_{\mathbb{P}(\mathcal{E_{-}})}(1)(\mathcal{L}^{-1})) = \mathbb{P}_{Y}(\mathcal{L}_{-} \oplus \mathcal{L}_{+})= \mathbb{P}_{Y}(\mathcal{E}),$$ and the exceptional divisor is $$ \mathbb{P}_{Y}(\mathcal{O})\hookrightarrow \mathbb{P}_{Y}(\mathcal{O}\oplus \mathcal{L}_{+}(\mathcal{L}_{-}^{-1})),$$
i.e.,
$$ \mathbb{P}_{Y}(\mathcal{L}_{-}) \hookrightarrow \mathbb{P}_{Y}(\mathcal{L}_{-}\oplus \mathcal{L}_{+})$$
   i.e., 
   $$E_{-}\hookrightarrow \mathbb{P}(\mathcal{E)}.$$  
It follows that the blow-up map $ \mathbb{P}(\mathcal{E}) \longrightarrow \mathbb{P}_{Y_{-}}(L_{-}\oplus \mathcal{E}_{-})$ is just the map $ q_{-}.$ Thus, $ \widetilde{Y_{-}} = \mathbb{P}_{Y_{-}}(L_{-}\oplus \mathcal{E}_{-}).$ So,
\begin{equation*}
\textnormal{Bl}_{Y_{+}} (X)\cong \mathbb{P}_{Y_{-}}(L_{-}\oplus \mathcal{E}_{-}).  
\end{equation*}

so $(i)$ is proved.

$(ii)$ Let $\mathcal{G}_{-}= L_{-}\oplus \mathcal{E}_{-}.$ By Lemma \ref{lemma:lemma relative}, under the blow-up map
\begin{equation*}
\mathbb{P}_{\mathbb{P}(\mathcal{E}_{-}(\mathcal{L}_{-}^{-1}))}(\mathcal{O}\oplus \mathcal{O}_{\mathbb{P}(\mathcal{E}_{-}(\mathcal{L}_{-}^{-1}))}(1)) \longrightarrow \mathbb{P}_{Y_{-}}(\mathcal{O}\oplus \mathcal{E}_{-}(L_{-}^{-1})),
 \end{equation*}

$\mathcal{O}(1)$ pulls back to $ \mathcal{O}(1)$. Since $ L_{-}$ pulls back to $ \mathcal{L}_{-}$ under the map $ Y \xlongrightarrow{p_-} Y_{-}$, it follows that under the blow-up map 
\begin{equation*}
\mathbb{P}_{Y}(\mathcal{L}_{-}\oplus \mathcal{L}_{+}) \xlongrightarrow{q_-} \mathbb{P}_{Y_{-}}(\mathcal{G}_{-}),
\end{equation*}

$ \mathcal{O}(1)$ pulls back to $ \mathcal{O}(1).$ Hence, 
\begin{equation*}
(q_-)^{\ast}(\pi_{-})^{\ast}\mathcal{O}_{X}(1)= (q_-)^{\ast}\mathcal{O}_{\mathbb{P}(\mathcal{G}_{-})}(1), 
\end{equation*}

which implies, $ (\pi_{-})^{\ast}\mathcal{O}_{X}(1)= \mathcal{O}_{\mathbb{P}(\mathcal{G}_{-})}(1)$.

  Since $F$ fits into an exact sequence 

\begin{equation*}
    0 \longrightarrow \mathcal{O}_{Y_{-}}\xlongrightarrow{s}\mathcal{G}_{-} \longrightarrow F \longrightarrow 0,     
\end{equation*}

the divisor $D:= \mathbb{P}_{Y}(F)$ is an effective divisor in $ \mathbb{P}(\mathcal{G}_{-})$ in the linear system $ | \mathcal{O}_{\mathbb{P}(\mathcal{G}_{-})}(1)|.$ Note that 
\begin{equation*}
H^{0}(Y_{-}, \mathcal{G}_{-}) =  H^{0}(\mathbb{P}(\mathcal{G}_{-}), \mathcal{O}_{\mathbb{P}(\mathcal{G}_-)}(1)) = H^{0}(X,\mathcal{O}_{X}(1))=V. 
\end{equation*}
A non zero section $s \in H^{0}(Y_{-}, \mathcal{G}_{-})$ corresponding to a hyperplane $H_{s}$ in $ \mathbb{P}(V)$ and $ D= \pi_{-}^{-1}(H_{s} \cap X)$. Thus, $H_{s} \cap X=\pi_{-}(D)$ is irreducible.

Write $s= (s_{-},s_{+}),$ where $ s_{-}\in V_{-}= H^{0}(Y_{-},L_{-}),$ $ s_{+} \in V_{+}= H^{0}(Y_{+},L_{+})= H^{0}(Y_{-}, \mathcal{E}_{-}).$  If $s_{+}=0,$ then since $s$ is nowhere vanishing, we must have $s_-$ is nowhere vanishing, implying that $L_{-}$ is trivial, which is a contradiction. Hence, $s_{+}\neq 0.$ Therefore,
$$ Y_{+} \cap H_{s}= \big \{ y_{+}\in Y_{+} \hspace{2pt} | \hspace{2pt} s_{+}(y_{+}) = 0  \big \} \neq  Y_{+}.$$ So, $ Y_{+} \not \subset H_{s}. $

Since $\pi_{-}$ is an isomorphism away from $Y_{+},$ and $\pi_{-}^{-1}(H_{s} \cap X)= D = \mathbb{P}_{Y_{-}}(F)$ is smooth, $ H_{s} \cap X$ is smooth away from $H_{s} \cap Y_{+}$. Moreover, codim$ (H_{s} \cap Y_{+}, H_{s}\cap X) = $
 codim$(Y_{+},X)  \geq 2$, since  $Y_{+} \not \subset H_{s}$. Thus, $ H_{s} \cap X$ is regular in codimension 1. As, $ H_{s}\cap X $ is a hyperplane section of $X$, it is generically reduced, so $ H_{s}\cap X$ is reduced, and is $S_{2}$. By Serre's criterion of normality, $ H_{s}\cap X$ is a normal variety. Let $ \pi_{1}= {\pi_{-}}{\mid_{D}}:D \longrightarrow H_{s}\cap X$. We have that $ \pi_{1}$ is an isomorphism away from $ H_{s}\cap Y_{+},$ and over $ H_{s}\cap Y_{+}$, $ \pi_{1}$ is a $ \mathbb{P}^{k_+}$- bundle, where $ k_{+}= \textrm{dim} Y - \textrm{dim}Y_{+}.$

Therefore, $\pi_{1}$ is the contraction of an extremal ray $R$ of $D$. The dimension of Ex$(\pi_{1})$ is given by Ex$(\pi_{1}) = \textrm{dim}(H_{s} \cap Y_{+}) + k_{+}
= \textrm{dim} \hspace{2pt}Y_{+} -1+k_{+} = \textrm{dim} \hspace{2pt} Y-1  =  \textrm{dim} \hspace{2pt}\mathbb{P}_{Y_{-}}(F)-1 = \textrm{dim} \hspace{2pt}D -1.$
So, $ \pi_{1}$ is divisorial contraction. Let $ y_{1},y_{2}\in Y_{+}$ be such that $ y_{1}\in H_{s}$,  $ y_{2}\neq H_{s}.$ Let $C_{i}$ be a line in $ \pi_{-}^{-1}(y_{i}) \cong \mathbb{P}^{k_{+}}.$ The length of $R$ is given by
$$ \ell(R) = -K_{D}\cdot C_{1}= -(K_{\mathbb{P}(\mathcal{G}_{-})}+ D)\cdot C_{1} \hspace{20pt} \textrm{(by Adjunction Formula)} $$
$$\hspace{28pt} = (K_{\mathbb{P}(\mathcal{G}_{-})}+D) \cdot C_{2} \hspace{30pt} (\textrm{as}\hspace{3pt} C_{1} \equiv C_{2})$$
$$\hspace{35pt} = -K_{\mathbb{P}(\mathcal{G_{-}})} \cdot C_{2} \hspace{30pt} (\textrm{since}\hspace{3pt} C_{2} \cap D  =\varnothing )$$
$$\hspace{10pt} =k_{+}
\textrm{ (as} \hspace{2pt}\mathbb{P}(\mathcal{G}_{-}) \xlongrightarrow{\pi_{-}} \textrm{X is smooth blow-up of a smooth variety}$$ 
$$ \hspace{100pt}\textrm{along a smooth subvariety of codimension} \hspace{3pt}k_{+}+1 ). $$
By {\cite [Theorem 5.2]{AO}}, $ \pi_{1}$ is blow-up of a smooth variety along a smooth subvariety. Thus, $ H_{s}\cap X$ and $H_{s} \cap Y_{+}$ are smooth, and $ \mathbb{P}_{Y_{-}}(F)\cong \textnormal{Bl}_{H_{s}\cap Y_{+}}(H_{s}\cap X).$ 
\end{proof}

\hspace{10pt} Of course, by switching $+$ and $-$ in Theorem \ref{theorem:theorem drum}, we obtain a symmetric and valid theorem. Also, note that if rk$(\mathcal{E}_-)\geq\, $dim$Y_-$, then by \cite[Theorem 2.8]{Ot}, a general section of $L_-\oplus\mathcal{E}_-$ is nowhere vanishing. Therefore, Theorem \ref{theorem:theorem drum}$(ii)$, indeed provides examples of projective bundles with smooth blow-up structures.
 
\hspace{10pt}To provide explicit applications of Theorem \ref{theorem:theorem drum}, we need to describe the drum $X$. In {\cite [Propositions 1.8,1.9,1.10,1.12]{Pa}}, the drum is computed for certain  rational homogenous varieties $Y$ that have two projective bundle structures. See also {\cite [Examples 2.14,2.16]{LF}}. For these $Y$, $Y_{\pm}$ can be obtained from the first table in \cite[Section $4$]{CRC}. This leads to the following corollary:  

\begin{corollary}\label{theorem:theorem grassmanian}
 \begin{enumerate}
 \item {For 1 $\leq r \leq m-1$ } we have: 
 \item [{1.1}] The blow-up of \emph{Gr}$(r+1, m+2)$ along \emph{Gr}$(r, m+1)$ is a projective bundle over \emph{Gr}$(r+1, m+1)$.
 \item [{1.2}] The blow-up of \emph{Gr}$(r+1, m+2)$ along \emph{Gr}$(r+1, m+1)$ is a projective bundle over \emph{Gr}$(r, m+1)$.
\item {For $m\geq3$, blow-up of \emph{OG}$(m, 2m+1)$ along \emph{OG}$_{+}(m, 2m)(\cong$ \emph{OG}$(m-1, 2m-1)$) is a projective bundle over \emph{OG}$(m-1, 2m-1)$}. 
\item Let $m\geq 2$, $1\leq r \leq m-1$, $w$ a skew-form on $k^{2m+1}$ of maximal rank. We have:
\item [{3.1}] The blow-up of \emph{Gr}$_{w}(r+1, 2m+1)$ along \emph{SG}$(r, 2m)$ is a projective bundle over \emph{SG}$(r+1, 2m)$.
\item[{3.2}] The blow-up of \emph{Gr}$_{w}(r+1, 2m+1)$ along \emph{SG}$(r+1, 2m)$ is a projective bundle over \emph{SG}$(r+1, 2m)$.
\end{enumerate}
 
 \end{corollary}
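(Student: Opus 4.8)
The plan is to recognize that Corollary \ref{theorem:theorem grassmanian} is a direct translation of Theorem \ref{theorem:theorem drum}$(i)$ and its symmetric counterpart, once we identify the relevant drums $X$ and the induced projective bundle structures $Y_\pm$. The main conceptual input is that each listed homogeneous variety $Y$ carries two projective bundle structures $p_\pm\colon Y\to Y_\pm$, and that the associated smooth drum $X$ and the embeddings $Y_\pm\hookrightarrow X$ have already been computed in the literature cited in the paragraph preceding the corollary. So the real work is bookkeeping: matching each case (1.1, 1.2, 2, 3.1, 3.2) to a concrete triple $(Y,\mathcal{L}_-,\mathcal{L}_+)$ and reading off $X$, $Y_+$, $Y_-$ from the tables.

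First I would, for each item, specify the underlying variety $Y$ with its two contractions. For case (1), $Y$ is a flag variety $\mathrm{Fl}(r,r+1;m+2)$ whose two $\mathbb{P}$-bundle projections land on $\mathrm{Gr}(r,m+2)$ and $\mathrm{Gr}(r+1,m+2)$; the drum built on the appropriate line bundles is $\mathrm{Gr}(r+1,m+2)$, with $Y_+\cong\mathrm{Gr}(r,m+1)$ and $Y_-\cong\mathrm{Gr}(r+1,m+1)$ (using the decomposition $k^{m+2}=k^{m+1}\oplus k$ fixed in \S2). For case (2), $Y$ is the relevant flag in type $B_m$, the drum is $\mathrm{OG}(m,2m+1)$, and the two boundary varieties are both isomorphic to $\mathrm{OG}(m-1,2m-1)$. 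For case (3), $Y$ is the odd-symplectic flag variety, the drum is $\mathrm{Gr}_w(r+1,2m+1)$, and the boundaries are $\mathrm{SG}(r,2m)$ and $\mathrm{SG}(r+1,2m)$. In each instance I would cite \cite[Propositions 1.8, 1.9, 1.10, 1.12]{Pa} together with the first table in \cite[Section 4]{CRC} to justify that these are precisely the drum and the $Y_\pm$ arising from that triple, and that $X$ is smooth so Theorem \ref{theorem:theorem drum} applies.

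Having identified the data, the argument is then immediate: Theorem \ref{theorem:theorem drum}$(i)$ gives $\mathbb{P}_{Y_-}(L_-\oplus\mathcal{E}_-)\cong\mathrm{Bl}_{Y_+}(X)$, which says exactly that the blow-up of $X$ along $Y_+$ is a projective bundle over $Y_-$; substituting the identifications yields statements 1.1, 2, and 3.1. Applying the symmetric version noted in the remark after the theorem (switching $+$ and $-$) gives $\mathbb{P}_{Y_+}(L_+\oplus\mathcal{E}_+)\cong\mathrm{Bl}_{Y_-}(X)$, which produces statements 1.2 and 3.2. Note that in case (2) the two boundaries coincide up to isomorphism, so the symmetric version does not yield a genuinely new statement, consistent with only one assertion being listed.

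The main obstacle I anticipate is purely organizational rather than geometric: correctly matching the indexing conventions of the three source tables (the flag-variety notation in \cite{Pa}, the $Y_\pm$ column in \cite[Section 4]{CRC}, and the ranges $1\le r\le m-1$, $m\ge 3$, $m\ge 2$ imposed in \S2) so that the dimensions and the Grassmannian/symplectic indices line up exactly, and verifying in each case that the drum $X$ really is smooth so that Theorem \ref{theorem:theorem drum} is applicable. Once the dictionary is fixed, no further computation is needed—the corollary is a direct specialization of the main theorem, so I would present it essentially as a table-lookup followed by a one-line invocation of Theorem \ref{theorem:theorem drum} and its symmetric form.
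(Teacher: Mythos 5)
Your proposal follows essentially the same route as the paper, which likewise offers no computation for this corollary: it simply cites \cite[Propositions 1.8, 1.9, 1.10, 1.12]{Pa} and the first table of \cite[Section 4]{CRC} to identify the drum $X$ and the boundary varieties $Y_\pm$ for each case, and then invokes Theorem \ref{theorem:theorem drum}$(i)$ together with its $\pm$-swapped version, exactly as you do. One small indexing slip in your case (1): the variety carrying the two projective bundle structures should be the flag variety $\mathrm{Fl}(r,r+1;m+1)$, whose two contractions land on $\mathrm{Gr}(r,m+1)$ and $\mathrm{Gr}(r+1,m+1)$ (consistent with the $Y_\pm$ and the drum $\mathrm{Gr}(r+1,m+2)$ that you correctly list), not $\mathrm{Fl}(r,r+1;m+2)$.
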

 Here the embeddings between Grassmanians are explained in $\S 2$.
 
\hspace{10pt} When $Y_{-}$ is a projective space and $Y$ rational homogeneous, we can also identify the vector bundle $\mathcal{E}_{-}$ on Y$_{-}$.
\begin{corollary}
    
\label{theorem:theorem r}\begin{enumerate}
\item  The following statements hold: 

$i)$ The projective bundle $\mathbb{P}_{\mathbb{P}^n}(\mathcal{O}_{\mathbb{P}^n}(1) \oplus T_{\mathbb{P}^n}(-1))$ is the blow- up of a smooth quadric in $\mathbb{P}^{2n+1}$ along a linear subvariety of dimension $n$.
    
 $ii)$ If 
\begin{equation}
0\longrightarrow \mathcal{O}_{\mathbb{P}^n}\xlongrightarrow {s}{\mathcal{O}_{\mathbb{P}^n}(1) \oplus T_{\mathbb{P}^n}(-1)}\longrightarrow F\longrightarrow 0
\end{equation}
 is an short exact sequence of vector bundles, where $s$ is a nowhere vanishing section, then $\mathbb{P}_{\mathbb{P}^n}(F)$ is the blow up of the smooth quadric in $\mathbb{P}^{2n}$ along a linear subvariety of dimension $n-1$.
 \item 
 Let $ n\geq 3$. The following statements hold.
     
(i) $\mathbb{P}_{\mathbb{P}^n}(\mathcal{O}_{\mathbb{P}^n}(1) \oplus \Omega_{\mathbb{P}^n}(2)) $ is the blow-up of $\emph{Gr}(2,n+2)$ along $\emph{Gr}(2,n+1)$.    

(ii) If $s$ is a nowhere vanishing section of $ \mathcal{O}_{\mathbb{P}^n}(1)\oplus \Omega_{\mathbb{P}^n}(2)$ and the vector bundle $E$ is defined by the exact sequence 

$$ 0\longrightarrow \mathcal{O}_{\mathbb{P}^n}(1) \longrightarrow \mathcal{O}_{\mathbb{P}^n}(1) \oplus \Omega_{\mathbb{P}^n}(2) \longrightarrow E \longrightarrow 0  ,$$ then $ \mathbb{P}_{\mathbb{P}^n}(E)$ is the blow-up of the $ H \cap \emph{Gr}(2,n+2)$ along $ H \cap \emph{Gr}(2,n+1)$, where $ H \cap \emph{Gr}(2,n+2)$ is a smooth hyperplane section of $ \emph{Gr}(2,n+2)$ under the Plücker embedding, such that $ H\cap \emph{Gr}(2,n+1)$ is also smooth. For example, this gives smooth blow-up structure on $\mathbb{P}_{\mathbb{P}^{n}}(\mathcal{O}_{\mathbb{P}^{n}}(1)\oplus \mathcal{N}_{\mathbb{P}^n}(1))$, where $n$ is odd and $\mathcal{N}_{\mathbb{P}^n}$ is a null-correlation bundle on $\mathbb{P}^n.$
 
\end{enumerate}
\end{corollary}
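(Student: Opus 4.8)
The plan is to exhibit each of the four assertions as a direct instance of Theorem \ref{theorem:theorem drum}, so that the only real work is to pin down, for a suitable drum $X$, the roof $Y$ together with its two projective bundle structures $p_{\pm}:Y\to Y_{\pm}$, the embedded subvariety $Y_{+}\hookrightarrow X$, and the bundle $\mathcal{E}_{-}=(p_{-})_{*}(\mathcal{L}_{+})$ on $Y_{-}=\mathbb{P}^{n}$. Once these are identified, part $(i)$ of each statement is Theorem \ref{theorem:theorem drum}$(i)$ and part $(ii)$ is Theorem \ref{theorem:theorem drum}$(ii)$; in particular the smoothness of the relevant hyperplane sections is not something to prove by hand, since it is already furnished by Theorem \ref{theorem:theorem drum}$(ii)$. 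The drums themselves are read off from the homogeneous computations of \cite{Pa} and \cite{LF}, as in the discussion preceding Corollary \ref{theorem:theorem grassmanian}.

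For the first statement I would take the roof to be $Y=\mathbb{P}_{\mathbb{P}^{n}}(T_{\mathbb{P}^{n}})\cong \mathrm{Fl}(1,n;n+1)$, whose two projective bundle structures realize $Y_{-}=\mathbb{P}^{n}$ and $Y_{+}=(\mathbb{P}^{n})^{\vee}\cong\mathbb{P}^{n}$; the associated drum is the smooth quadric $X=Q^{2n}\subset\mathbb{P}^{2n+1}$, with $Y_{+}$ embedded as a maximal linear subspace $\mathbb{P}^{n}$. Since $\mathbb{P}(\mathcal{E}_{-})\cong Y\cong\mathbb{P}(T_{\mathbb{P}^{n}})$, the bundle $\mathcal{E}_{-}$ agrees with $T_{\mathbb{P}^{n}}$ up to a twist; I would fix the twist to be $T_{\mathbb{P}^{n}}(-1)$ using the identity $\mathcal{O}_{\mathbb{P}(\mathcal{E}_{-})}(1)=\mathcal{L}_{+}$ from the proof of Theorem \ref{theorem:theorem drum}$(i)$, together with the normalisation $V=H^{0}(X,\mathcal{O}_{X}(1))=H^{0}(\mathbb{P}^{n},\mathcal{O}(1)\oplus\mathcal{E}_{-})$ established there, which forces $h^{0}(\mathcal{E}_{-})=n+1=h^{0}(T_{\mathbb{P}^{n}}(-1))$. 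Theorem \ref{theorem:theorem drum}$(i)$ then gives $(i)$. For $(ii)$, a smooth hyperplane section $H_{s}\cap Q^{2n}$ of a smooth quadric is the smooth quadric $Q^{2n-1}\subset\mathbb{P}^{2n}$, and $H_{s}\cap Y_{+}$ is a hyperplane $\mathbb{P}^{n-1}$ in $Y_{+}=\mathbb{P}^{n}$ (using $Y_{+}\not\subset H_{s}$), so Theorem \ref{theorem:theorem drum}$(ii)$ yields the claimed blow-up.

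For the second statement ($n\geq 3$) I would take $Y=\mathrm{Fl}(1,2;n+1)\cong\mathbb{P}_{\mathbb{P}^{n}}(\Omega_{\mathbb{P}^{n}})$, the incidence variety of a point inside a line in $\mathbb{P}^{n}$, whose structures give $Y_{-}=\mathbb{P}^{n}$ and $Y_{+}=\mathrm{Gr}(2,n+1)$; the drum is $X=\mathrm{Gr}(2,n+2)$ in its Pl\"ucker embedding, with $Y_{+}=\mathrm{Gr}(2,n+1)\hookrightarrow\mathrm{Gr}(2,n+2)$. As before $\mathbb{P}(\mathcal{E}_{-})\cong\mathbb{P}(\Omega_{\mathbb{P}^{n}})$ identifies $\mathcal{E}_{-}$ with $\Omega_{\mathbb{P}^{n}}$ up to a twist, and the twist is pinned to $\Omega_{\mathbb{P}^{n}}(2)$ by the same $\mathcal{O}_{\mathbb{P}(\mathcal{E}_{-})}(1)=\mathcal{L}_{+}$ normalisation, consistent with $h^{0}(\Omega_{\mathbb{P}^{n}}(2))=\binom{n+1}{2}=h^{0}(\mathrm{Gr}(2,n+1),\mathcal{O}(1))$. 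Theorem \ref{theorem:theorem drum}$(i)$ gives $(i)$, and Theorem \ref{theorem:theorem drum}$(ii)$ gives $(ii)$, the hyperplane sections being automatically smooth. For the null-correlation example I would, when $n$ is odd, choose the nowhere vanishing section $s=(0,s_{+})$ of $\mathcal{O}_{\mathbb{P}^{n}}(1)\oplus\Omega_{\mathbb{P}^{n}}(2)$ with $s_{+}$ a nowhere vanishing section of $\Omega_{\mathbb{P}^{n}}(2)$ (which exists exactly when $n$ is odd); then the cokernel $E$ splits as $\mathcal{O}_{\mathbb{P}^{n}}(1)\oplus\mathcal{N}_{\mathbb{P}^{n}}(1)$, since modding $\Omega_{\mathbb{P}^{n}}(2)$ out by the image of $s_{+}$ gives $\mathcal{N}_{\mathbb{P}^{n}}(1)$ by the defining sequence of the null-correlation bundle in \S 2.

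I expect the main obstacle to be the precise identification of $\mathcal{E}_{-}$: the projective bundle structure $Y\to\mathbb{P}^{n}$ only determines $\mathcal{E}_{-}$ up to tensoring by a line bundle, so the crux is to fix the correct twist. I would do this by combining the relation $\mathcal{O}_{\mathbb{P}(\mathcal{E}_{-})}(1)=\mathcal{L}_{+}$ established inside the proof of Theorem \ref{theorem:theorem drum}$(i)$ with the fact that $\mathcal{O}_{X}(1)$ pulls back to $\mathcal{O}_{\mathbb{P}(\mathcal{E}_{-})}(1)$, and then checking the twist against the global section counts above; this is where the homogeneous descriptions of $X$, $Y_{\pm}$ and $\mathcal{L}_{\pm}$ in \cite{Pa}, \cite{CRC} and \cite{LF} do the decisive work. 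Everything else, namely the blow-up structures and the smoothness of the hyperplane sections, is inherited formally from Theorem \ref{theorem:theorem drum}.
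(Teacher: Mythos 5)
Your proposal is correct and follows essentially the same route as the paper: identify the roofs $\mathbb{P}_{\mathbb{P}^n}(T_{\mathbb{P}^n})$ and $\mathbb{P}_{\mathbb{P}^n}(\Omega_{\mathbb{P}^n})$ with drums $Q_{2n}$ and $\mathrm{Gr}(2,n+2)$ from \cite{LF} and \cite{Pa}, pin down the twist of $\mathcal{E}_-$ via the section count $h^0(Y_-,\mathcal{E}_-)=h^0(Y_+,L_+)$, and then invoke Theorem \ref{theorem:theorem drum}. The only point the paper makes explicit that you leave implicit is that the section count determines the twist uniquely, i.e.\ the Euler-sequence computation showing $h^0(T_{\mathbb{P}^n}(i))$ (resp.\ $h^0(\Omega_{\mathbb{P}^n}(i))$) is strictly smaller for lower twists and strictly larger for higher ones.
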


\begin{proof}
\begin{enumerate}
\item {As in \cite[Example 2.16]{LF}}, $Y = \mathbb{P}_{\mathbb{P}^{n}}(T_{\mathbb{P}^{n}})$ has two projective bundle structures over $\mathbb{P}^{n}$, and the drum constructed upon it is $Q_{2n}$. Since $Y\cong \mathbb{P}_{\mathbb{P}^n}(\mathcal{E}_-)$ over $Y_-=\mathbb{P}^n$, we have $\mathcal{E}_{-} = T_{\mathbb{P}^{n}}(j)$ for some $j \in \mathbb{Z}$. We also have:
\begin{equation*}
h^{0}(\mathbb{P}^{n}, T_{\mathbb{P}^{n}}(j)) = h^{0}(Y_{-}, \mathcal{E}_{-}) = h^{0}(Y_{+}, \mathcal{L}_{+}) = n+1.    
\end{equation*}
By Euler's exact sequence, we have the short exact sequence 
\begin{equation*}
 0\longrightarrow \mathcal{O}_{\mathbb{P}^{n}}(i)\longrightarrow \mathcal{O}_{\mathbb{P}^{n}}(i+1)^{n+1}\longrightarrow T_{\mathbb{P}^{n}}(i) \longrightarrow 0. 
\end{equation*}
From this one easily get:
\begin{equation*}
 h^{0}(\mathbb{P}^{n}, T_{\mathbb{P}^{n}}(i)) = 0 \quad \textnormal{if}\quad i<-1,\qquad h^{0}(\mathbb{P}^{n}, T_{\mathbb{P}^{n}}(-1)) = n+1.  
\end{equation*}
For $i\geq 0$, we have $\mathcal{O}_{\mathbb{P}^{n}}(i+1)$ very ample and $T_{\mathbb{P}^{n}}(-1)$ is globally generated. Thus
\begin{equation*}
h^{0}(\mathbb{P}^{n}, T_{\mathbb{P}^{n}}(i)) = h^{0}(\mathbb{P}^{n}, T_{\mathbb{P}^{n}}(-1)\otimes \mathcal{O}_{\mathbb{P}^{n}}(i+1)) > h^{0}(\mathbb{P}^{n}, T_{\mathbb{P}^{n}}(-1)) = n+1.   
\end{equation*}
So, $j = -1$. Now Theorem \ref{theorem:theorem drum} gives (1). 
\\
Note that $Y_{-}\hookrightarrow X$ is a linear subspace here as $\mathcal{O}_{X}(1)_{{|}_{{Y_{-}}}} = \mathcal{O}_{{Y}_{-}}(1) = \mathcal{O}_{\mathbb{P}^{n}}(1)$. 
\item{As in \cite[Examples 3.11]{BSV}, let Y = $\mathbb{P}_{\mathbb{P}^{n}}(\Omega_{\mathbb{P}^{n}})$ be the partial flag variety with the two projective bundle structure over $\mathbb{P}^{n}$, Gr$(2, n+1)$, respectively.} We have $Y = A_{n}(1, 2)$ in the notation of \cite[Remark 3.3]{OCR}. By \cite[Proposition 1.9]{Pa}, the drum $X$ constructed upon it is Gr$(2, n+2)$. Thus, $\mathcal{E}_{-} = \Omega_{\mathbb{P}^{n}}(j)$ for some $j \in \mathbb{Z}$. Euler's exact sequence gives a short exact sequence:

\begin{equation*}
0\longrightarrow \Omega(i)\longrightarrow\mathcal{O}(i-1)^{n+1}\longrightarrow \mathcal{O}(i)\longrightarrow 0.
\end{equation*}
By Bott vanishing (\cite{OSS}, page 8),
$h^{1}(\mathbb{P}^{n}, \Omega(i)) = 0 \text{ for } i\geq 1$.    
Hence, 
\begin{equation*}
 h^{0}(\mathbb{P}^{n},\Omega(i)) = 0 \quad \textnormal{if}\quad i\leq 1,\qquad h^{0}(\mathbb{P}^{n}, \Omega(2)) = \frac{n^{2}+n}{2},  
\end{equation*}

For $i>2$, by a similar argument as in the proof of part (1) , and using the fact that $\Omega(2)$ is globally generated by Theorem \ref{theorem:theorem cl}, we get 

\begin{equation*}
 h^{0}(\mathbb{P}^{n}, \Omega(i))> \frac{n^{2}+n}{2}.   
\end{equation*}
Since 
\begin{equation*}
h^{0}(\mathbb{P}^{n}, \Omega(j)) = h^{0}(\mathbb{P}^{n}, \mathcal{E}_{-}) = h^{0}(Y_{+}, L_{+}) = \frac{n^{2}+n}{2}, 
\end{equation*}
we get $j = 2$. Now the rest of the proof follows Theorem \ref{theorem:theorem drum}.
\end{enumerate}    
\end{proof}
\begin{remark}
Note that $1.ii)$ of Corollary \ref{theorem:theorem r}, generalizes Theorem 1 in \textnormal{\cite{Va}} to higher dimensions.
\end{remark}
\begin{remark}
    
For $n=2$ Corollary \ref{theorem:theorem r}, $(ii)$ is still true, but this is essentially corollary $(i)$, as the perfect pairing $ \Omega_{\mathbb{P}^2} \times \Omega_{\mathbb{P}^2} \longrightarrow \Omega_{\mathbb{P}^{2}}^{2}= \mathcal{O}_{\mathbb{P}^2}(-3) $ shows that $ T_{\mathbb{P}^{2}} = \Omega_{\mathbb{P}^2}^{\ast}= \Omega_{\mathbb{P}^2}(3)$.  

\end{remark}

\section{Projective bundle as a blow-up of a singular variety along a linear subspace}

Consider the following setup. Let $d, n \geq 2$ be integers and define 
\begin{equation*}
 V= \{\underline{x}\in \mathbb{P}^{2n+1} \,|\, x_{0}^{d-1}x_{1}+ x_{2}^{d-1}x_{3}+\ldots+ x_{2n}^{d-1}x_{2n+1}=0\},
\end{equation*}
\begin{equation*}
 L_{0} = \{ \underline{x} \in \mathbb{P}^{2n+1} \medspace | \medspace x_{0} = x_{2} = \ldots = x_{2n}=0 \}.
 \end{equation*}

\hspace{10pt}Here, $L_0$ is an $n-$dimensional linear subspace contained in $V$. The variety $V$ is a smooth quadric if $ d=2 $, and \textnormal{Sing(V)} = $L_{0}$, if $ d\geq 3$.

\hspace{10pt} For $ r\geq 0$, identify  $\mathbb{P}^{2n+1}$ as the linear subvariety of $\mathbb{P}^{2n+1+r}$ consisting of points with the last $r$ homogeneous coordinates $0$. Let $M$ be the ($r-1$)-dimensional linear subspace of $\mathbb{P}^{2n+1+r}$ consisting of points with the first $2n+1$ homogeneous coordinates equal to $0$. For a subvariety $W$ of $\mathbb{P}^{2n+1}$, denote by $C(W)$ the cone over $W$ with vertex $M$. Therefore, 
$ C(L_{0})$ is a linear subspace of dimension $n+r$ contained in  $C(V)$. 

In what follows, $(\alpha_0,\alpha_1,...,\alpha_n)$ are the homogeneous coordinate of $\mathbb{P}^n$, $(x_0,x_1,...,x_{2n+1+r})$ is the homogeneous coordinates of $\mathbb{P}^{2n+1+r}$.

\hspace{10pt}Consider the  surjection of vector bundles:
\begin{equation*}
\mathcal{O}_{\mathbb{P}^{n}}^{n+1}\xlongrightarrow[(\alpha_{0}^{d-1},\ldots,\alpha_{n}^{d-1})]{\psi} \mathcal{O}_{\mathbb{P}^{n}}(d-1).
\end{equation*}

\hspace{10pt}Let $ F= \textnormal{Ker} \medspace \psi$, which is a rank $n$ vector bundle over $ \mathbb{P}^n$. Hence, we have the following exact sequence: 

\begin{equation}
0\longrightarrow F\longrightarrow\mathcal{O}_{\mathbb{P}^{n}}^{n+1}\xlongrightarrow{\psi} \mathcal{O}_{\mathbb{P}^{n}}(d-1)\longrightarrow 0.
\end{equation}

\hspace{10pt}Taking the dual of the eq.(8), we have the following sequence:
\begin{equation}
0 \longrightarrow \mathcal{O}_{\mathbb{P}^n}(-d+1) \longrightarrow \mathcal{O}_{\mathbb{P}^n}^{n+1} \longrightarrow F^{\ast} \longrightarrow 0.
\end{equation}
\hspace{10pt}For example, if we set $d=2$, in eq.(9), then Euler's exact sequence shows that $F^* = T_{\mathbb{P}^n}(-1)$.

\hspace{10pt}We have an isomorphism 

\begin{equation*}
k^{2n+2+r}\xlongrightarrow{\phi} k^{r}\oplus k^{n+1}\oplus H^{0}(\mathbb{P}^{n}, \mathcal{O}_{\mathbb{P}^{n}}^{r+n+1}\oplus \mathcal{O}_{\mathbb{P}^{n}}(1))  
\end{equation*}
\begin{equation*}
(x_{0},\ldots,x_{2n+1+r})\longmapsto ((x_{2n+2},\ldots,x_{2n+1+r}),(x_{1},\ldots,x_{2n+1}),\sum_{i=0}^{n}x_{2i}\alpha_{i}).     
\end{equation*}
This yields 
\begin{equation*}
p: \mathbb{P}_{\mathbb{P}^{n}}(\mathcal{O}_{\mathbb{P}^{n}}^{r+n+1}\oplus\mathcal{O}_{\mathbb{P}^{n}}(1))\longrightarrow \mathbb{P}(H^{0}(\mathbb{P}^{n}, \mathcal{O}_{\mathbb{P}^{n}}^{r+n+1}\oplus \mathcal{O}_{\mathbb{P}^{n}}(1))),    
\end{equation*}
which is the blow-up along $C(L_{0})$.

For $ r\geq 0$  we have a surjection: 
\begin{equation}
 \mathcal{O}_{\mathbb{P}^n}^{r+n+1}\oplus \mathcal{O}_{\mathbb{P}^n}(1)\longrightarrow \mathcal{O}_{\mathbb{P}^n}^r\oplus F^{\ast}\oplus \mathcal{O}_{\mathbb{P}^n}(1)
\end{equation}
 We have the induced map $ \pi: \mathbb{P}_{\mathbb{P}^n}(\mathcal{O}_{\mathbb{P}^n}^r \oplus F^{\ast} \oplus \mathcal{O}_{\mathbb{P}^n}(1)) \hookrightarrow \mathbb{P}_{\mathbb{P}^n}(\mathcal{O}_{\mathbb{P}^n}^{r+n+1} \oplus \mathcal{O}_{\mathbb{P}^n}(1)) \xlongrightarrow{p} \mathbb{P}^{2n+1+r}$.

\hspace{10pt}Now we can prove the result.

\begin{theorem}\label{theorem:theorem conee}
The map $\pi$  is the blow-up of $C(V)$ along $C(L_{0})$.
\end{theorem}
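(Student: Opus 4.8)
The plan is to obtain the blow-up structure directly from Lemma \ref{theorem:theorem key} and then to identify the image of $\pi$ with $C(V)$; throughout I write $\mathcal{O}$ for $\mathcal{O}_{\mathbb{P}^n}$ and set $E=\mathcal{O}^r\oplus F^\ast\oplus\mathcal{O}(1)$. The surjection $\mathcal{O}^{r+n+1}\oplus\mathcal{O}(1)\twoheadrightarrow E$ defining $\pi$ is the identity on the $\mathcal{O}(1)$-summand, so it satisfies the hypothesis $\phi(\mathcal{O}(1))\neq0$ of Lemma \ref{theorem:theorem key} with $m=r+n$. That lemma then applies verbatim and shows that $\pi$ is the blow-up of $\pi(\mathbb{P}_{\mathbb{P}^n}(E))$ along $\pi(\mathbb{P}_{\mathbb{P}^n}(E))\cap L$, where $L$ is the center of $p$, namely the linear space $C(L_0)$ identified in the setup. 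Since $C(L_0)\subseteq C(V)$, it therefore suffices to prove that $\pi(\mathbb{P}_{\mathbb{P}^n}(E))=C(V)$: the intersection will then be exactly $C(L_0)$, and the theorem will follow.

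To identify the image, let $\xi=\mathcal{O}_{\mathbb{P}(E)}(1)$, so that $\pi$ is the map associated to $\xi$ and the sections that $\phi$ identifies with the homogeneous coordinates $x_0,\dots,x_{2n+1+r}$. Tracing a coordinate through $\phi$ and the defining surjection, the even coordinate $x_{2i}$ pulls back to the section $\alpha_i\in H^0(\mathcal{O}(1))\subset H^0(E)$, the odd coordinate $x_{2i+1}$ pulls back to $\bar e_i\in H^0(F^\ast)\subset H^0(E)$ (the image of the $i$-th standard generator under $\mathcal{O}^{n+1}\twoheadrightarrow F^\ast$), and $x_{2n+2},\dots,x_{2n+1+r}$ pull back to a basis of $H^0(\mathcal{O}^r)$. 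I would then show that the polynomial $G=\sum_{i=0}^n x_{2i}^{d-1}x_{2i+1}$ cutting out $C(V)$ pulls back to zero on $\mathbb{P}(E)$, which forces $\pi(\mathbb{P}_{\mathbb{P}^n}(E))\subseteq C(V)$.

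The crux is this vanishing, which I would verify inside $H^0(\mathbb{P}(E),\xi^{\otimes d})=H^0(\mathbb{P}^n,\operatorname{Sym}^d E)$. By the previous paragraph, $\pi^\ast G=\sum_{i=0}^n\alpha_i^{d-1}\bar e_i$; since $\alpha_i^{d-1}\in\operatorname{Sym}^{d-1}\mathcal{O}(1)=\mathcal{O}(d-1)$ and $\bar e_i\in F^\ast$, this element lies in the direct summand $\mathcal{O}(d-1)\otimes F^\ast=F^\ast(d-1)$ of $\operatorname{Sym}^d E$. On the other hand, dualizing the defining sequence of $F$ yields $0\to\mathcal{O}(-d+1)\xrightarrow{(\alpha_0^{d-1},\dots,\alpha_n^{d-1})}\mathcal{O}^{n+1}\xrightarrow{q}F^\ast\to0$, and exactness makes the composite $q\circ(\alpha_i^{d-1})_i$ zero; evaluating on the generator gives precisely $\sum_{i=0}^n\alpha_i^{d-1}\bar e_i=0$ in $H^0(F^\ast(d-1))$. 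As $F^\ast(d-1)$ is a direct summand of $\operatorname{Sym}^d E$, this vanishing persists, so $\pi^\ast G=0$.

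Finally I would conclude by dimension. Lemma \ref{theorem:theorem key} gives that $\pi$ is birational onto its image, so $\dim\pi(\mathbb{P}_{\mathbb{P}^n}(E))=\dim\mathbb{P}_{\mathbb{P}^n}(E)=2n+r$, while $C(V)$ is an irreducible hypersurface of dimension $2n+r$ (being the zero locus of the irreducible polynomial $G$, which is linear in $x_1$ with leading coefficient $x_0^{d-1}$ coprime to its $x_1$-free part). An irreducible closed subvariety contained in an irreducible variety of equal dimension must equal it, so $\pi(\mathbb{P}_{\mathbb{P}^n}(E))=C(V)$, and with the first paragraph this exhibits $\pi$ as $\operatorname{Bl}_{C(L_0)}C(V)$. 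The step I expect to be most delicate is the third paragraph: one must track the coordinate sections correctly through $\phi$ and the surjection and then recognize $\pi^\ast G$ as the image of the section $\sum_i\alpha_i^{d-1}\bar e_i$ annihilated by the defining relation of $F^\ast$ — this matching of the exact sequence for $F^\ast$ with the monomial shape of the equation of $V$ carries all the geometric content.
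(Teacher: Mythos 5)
Your proposal is correct, and it follows the paper's overall skeleton: both reduce the theorem, via Lemma \ref{theorem:theorem key} applied to the surjection $\mathcal{O}_{\mathbb{P}^n}^{r+n+1}\oplus\mathcal{O}_{\mathbb{P}^n}(1)\twoheadrightarrow\mathcal{O}_{\mathbb{P}^n}^{r}\oplus F^{\ast}\oplus\mathcal{O}_{\mathbb{P}^n}(1)$, to the single claim $\operatorname{Im}\pi=C(V)$. Where you diverge is in how that image is computed. The paper argues pointwise: it describes the fibre $\pi^{-1}([\underline{x}])$ as the set of $\underline{\alpha}\in\mathbb{P}^n$ for which $((x_{2n+2},\dots),(x_1,\dots,x_{2n+1}),(x_0,\dots,x_{2n}))$ lies in $k^r\oplus F_{\underline{\alpha}}\oplus\mathcal{O}(-1)_{\underline{\alpha}}$, and unwinds this to the condition $x_{2i}=\lambda\alpha_i$ together with $\sum\alpha_i^{d-1}x_{2i+1}=0$, which is equivalent to $\sum x_{2i}^{d-1}x_{2i+1}=0$; this yields both inclusions simultaneously and stays entirely at the level of linear algebra in the fibres. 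You instead prove the inclusion $\operatorname{Im}\pi\subseteq C(V)$ sheaf-theoretically, recognizing $\pi^{\ast}G$ as the image of $\sum_i\alpha_i^{d-1}\bar{e}_i$ in the summand $F^{\ast}(d-1)$ of $\operatorname{Sym}^d E$ and killing it with the dualized defining sequence of $F$, and then recover equality from irreducibility of $G$ plus a dimension count (using that Lemma \ref{theorem:theorem key} makes $\pi$ birational onto its image). Both arguments are sound; the paper's is more elementary and self-contained, while yours isolates more cleanly the structural reason the equation of $V$ vanishes on $\mathbb{P}(E)$ — namely that the monomial shape of $G$ is exactly the defining relation of $F^{\ast}$ — at the modest cost of having to verify irreducibility of $G$ for the reverse inclusion.
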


\begin{proof}
Clearly, $\mathcal{O}_{\mathbb{P}^n}^{r+n+1} \oplus \mathcal{O}_{\mathbb{P}^n}(1) \xlongrightarrow {\psi} \mathcal{O}_{\mathbb{P}^n}^{r}\oplus F^{\ast} \oplus \mathcal{O}_{\mathbb{P}^n}(1)$ satisfies $ \psi({\mathcal{O}_{\mathbb{P}^n}(1)})= \mathcal{O}_{\mathbb{P}^n}(1) \neq 0 $. By Lemma \ref{theorem:theorem key}, it suffices to show $\text{Im}\,\pi  = C(V)$.

The description of the map $p$ gives:
$$ \text{Im}(\pi) = \{ [\underline{x}] \in \mathbb{P}^{2n+1+r} \medspace| \medspace  \exists\text{ } \underline{\alpha}\in \mathbb{P}^n,\, \textrm{such that} $$ 
$$ k^{r} \oplus F_{\underline{\alpha}}\oplus \mathcal{O}(-1)_{\underline{\alpha}}    \ni (( x_{2n+2},\ldots,x_{2n+1+r}),(x_{1},\ldots,x_{2n+1}),(x_{0},\ldots,x_{2n}))\}.$$
Therefore, 
\begin{equation*}
[\underline{x}]\in \text{Im}\,\pi\Leftrightarrow \text{there exist}\, [\underline{\alpha}] \in \mathbb{P}^n\, \textnormal{and}\, \lambda\in k \, \textnormal{such \,that}
\end{equation*}

\begin{equation*}
x_{2i}= \lambda \alpha_{i},\, \text{for}\medspace 0 \leq i \leq n, \,\,\,\sum_{i=0}^{n} \alpha_{2i}^{d-1} x_{2i+1}=0
\end{equation*}

\begin{equation*}
\Leftrightarrow \sum_{i=0}^{n} x_{2i}^{d-1}x_{2i+1} =0 \Leftrightarrow [\underline{x}] \in C(V).
\end{equation*}   
\end{proof}

Since $ H^{1}(\mathbb{P}^n, \mathcal{O}_{\mathbb{P}^n}(-d+1)) =0$ and $H^{0}( \mathbb{P}^n, \mathcal{O}_{\mathbb{P}^n}(-d+1))$, eq.(9) shows that $H^{0}(\mathbb{P}^n, F^{\ast}) = H^{0}(\mathbb{P}^n, \mathcal{O}_{\mathbb{P}^n}^{n+1}).$
Thus, via $\phi$, $H^{0}(\mathbb{P}^n, F^{\ast} \oplus \mathcal{O}_{\mathbb{{P}}^n}(1) )= H^{0}(\mathbb{P}^n, \mathcal{O}_{\mathbb{P}^n}^{n+1} \oplus \mathcal{O}_{\mathbb{{P}}^n}(1) )=k^{2(n+1)}$. For $\underline{x}\in k^{2n+1}$, let $s_{\underline{x}}$ be the corresponding section of $F^{\ast} \oplus \mathcal{O}_{\mathbb{{P}}^n}(1)$. Note that $ s_{\underline{x}}$ vanishes at $ \underline{\alpha} \in \mathbb{P}^n$ if and only if $\sum_{i=0}^{n} x_{2i} \alpha_{i}=0$
and $ (x_{1},x_{3},\ldots,x_{2n+1})\in\text{Span} \{(\alpha_{0}^{d-1}, \alpha_{1}^{d-1},\ldots, \alpha_{n}^{d-1})\}$.
Therefore, $s_{\underline{x}} $ is nowhere vanishing if and only if 
\begin{equation}
\sum_{i = 0}^{n} x_{2i}y_{2i+1}\neq 0 \quad \forall \, y_{1}, y_{3},\ldots,y_{2n+1}\in k, \, \text{where}\, y_{2i+1}^{d-1} = x_{2i+1}\,\text{for all}\, i. 
\end{equation} Eq.(11) implies that a general section of $F^{\ast} \oplus \mathcal{O}_{\mathbb{{P}}^n}(1)$ is nowhere vanishing. This also follows from \cite[Theorem 2.8]{Ot}, as $F^{\ast} \oplus \mathcal{O}_{\mathbb{{P}}^n}(1)$ is globally generated and has rank $n+1>n$.

\hspace{10pt} Now we want to study the projectivization of the quotient of $ \mathcal{O}_{\mathbb{P}^n}(1)\oplus F^{\ast}$ by a trivial subbundle. 

\begin{theorem}\label{theorem:theorem sing}
Let $\underline{x}\in k^{2n+2}\setminus0$ be such that $s=s_{\underline{x}}$ is a nowhere vanishing section of $ \mathcal{O}_{\mathbb{P}^n}(1)\oplus F^{\ast}$. Let $E$ be the rank $n$ vector bundle over $\mathbb{P}^n$ defined by the following exact sequence:
\begin{equation*}
0\longrightarrow \mathcal{O}_{\mathbb{P}^n}\xlongrightarrow{s} \mathcal{O}_{\mathbb{P}^n}(1)\oplus F^{\ast} \longrightarrow E \longrightarrow 0.
\end{equation*}
Thus, we can regard $ \mathbb{P}_{\mathbb{P}^n}(E)$ as a codimension $1$ subvariety of $\mathbb{P}_{\mathbb{P}^n}(\mathcal{O}_{\mathbb{P}^n}(1)\oplus F^{\ast})$.
  The hyperplane $H_{\underline{x}}:= \{\underline{y} \in \mathbb{P}^{2n+1} \medspace | \medspace \sum_{i=0}^{2n+1} y_{i} x_{i}=0 \} \not \supset L_{0} $ and $ \pi \mid_{\mathbb{P}_{\mathbb{P}^n}(E)}$ identifies $ \mathbb{P}_{\mathbb{P}^n}(E)$ as the blow-up of $ V \cap H_{\underline{x}}$ along $L_{0} \cap H_{\underline{x}}$.
\end{theorem}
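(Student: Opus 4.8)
The plan is to derive both assertions from Lemma~\ref{theorem:theorem key}, applied to a suitable surjection onto $E$, together with the image computation already performed in Theorem~\ref{theorem:theorem conee} specialized to $r=0$.

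First I would settle that $H_{\underline{x}}\not\supset L_0$. Since $L_0$ is the vanishing locus of the even coordinates, $H_{\underline{x}}\supseteq L_0$ holds exactly when the coefficients of the odd coordinates vanish, i.e. when $x_1=x_3=\cdots=x_{2n+1}=0$. If this were the case, then by the vanishing criterion for $s_{\underline{x}}$ recorded just before the theorem the $F^{\ast}$-component of $s_{\underline{x}}$ would be the zero section, so $s_{\underline{x}}$ would vanish precisely on the zero locus of the linear form $\sum_{i=0}^{n}x_{2i}\alpha_i$ on $\mathbb{P}^n$; as $n\geq 2$ this locus is nonempty, contradicting that $s_{\underline{x}}$ is nowhere vanishing. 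Hence the odd coordinates do not all vanish and $H_{\underline{x}}\not\supset L_0$.

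Next, for the blow-up structure I would produce a surjection to which Lemma~\ref{theorem:theorem key} applies. Composing the surjection $\mathcal{O}_{\mathbb{P}^n}^{n+1}\oplus\mathcal{O}_{\mathbb{P}^n}(1)\twoheadrightarrow F^{\ast}\oplus\mathcal{O}_{\mathbb{P}^n}(1)$ (the dual Euler surjection $\mathcal{O}_{\mathbb{P}^n}^{n+1}\twoheadrightarrow F^{\ast}$ on the first factor and the identity on the second) with the defining quotient $F^{\ast}\oplus\mathcal{O}_{\mathbb{P}^n}(1)\twoheadrightarrow E$ yields a surjection $\Phi:\mathcal{O}_{\mathbb{P}^n}^{n+1}\oplus\mathcal{O}_{\mathbb{P}^n}(1)\twoheadrightarrow E$. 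I would then check $\Phi(\mathcal{O}_{\mathbb{P}^n}(1))\neq 0$: the composite $\mathcal{O}_{\mathbb{P}^n}(1)\hookrightarrow\mathcal{O}_{\mathbb{P}^n}(1)\oplus F^{\ast}\twoheadrightarrow E$ could vanish only if the summand $\mathcal{O}_{\mathbb{P}^n}(1)$ were contained in the rank-one subsheaf $\mathcal{O}_{\mathbb{P}^n}\cdot s\cong\mathcal{O}_{\mathbb{P}^n}$, which is impossible for degree reasons. Since $\Phi$ factors through $\mathcal{O}_{\mathbb{P}^n}(1)\oplus F^{\ast}$, the embedding it induces agrees with $\mathbb{P}_{\mathbb{P}^n}(E)\hookrightarrow\mathbb{P}_{\mathbb{P}^n}(\mathcal{O}_{\mathbb{P}^n}(1)\oplus F^{\ast})\hookrightarrow\mathbb{P}_{\mathbb{P}^n}(\mathcal{O}_{\mathbb{P}^n}^{n+1}\oplus\mathcal{O}_{\mathbb{P}^n}(1))$, so the resulting map to $\mathbb{P}^{2n+1}$ is exactly $\pi|_{\mathbb{P}_{\mathbb{P}^n}(E)}$ and the center of $p$ is $L_0$. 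Lemma~\ref{theorem:theorem key} then gives that $\pi|_{\mathbb{P}_{\mathbb{P}^n}(E)}$ is the blow-up of $\pi(\mathbb{P}_{\mathbb{P}^n}(E))$ along $\pi(\mathbb{P}_{\mathbb{P}^n}(E))\cap L_0$.

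It remains to identify this image, and this is the step I expect to be the main obstacle. Because $\pi^{\ast}\mathcal{O}_{\mathbb{P}^{2n+1}}(1)$ restricts to $\mathcal{O}_{\mathbb{P}(\mathcal{O}_{\mathbb{P}^n}(1)\oplus F^{\ast})}(1)$ (Lemma~\ref{lemma:lemma relative}), pullback identifies $H^0(\mathbb{P}^{2n+1},\mathcal{O}(1))$ with $H^0(\mathbb{P}^n,\mathcal{O}_{\mathbb{P}^n}(1)\oplus F^{\ast})=k^{2n+2}$ compatibly with $\phi$, and under this identification $s_{\underline{x}}$ is the pullback of the linear form $\ell_{\underline{x}}$ cutting out $H_{\underline{x}}$. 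Consequently $\mathbb{P}_{\mathbb{P}^n}(E)=Z(s_{\underline{x}})=\pi^{-1}(H_{\underline{x}})\cap\mathbb{P}_{\mathbb{P}^n}(\mathcal{O}_{\mathbb{P}^n}(1)\oplus F^{\ast})$, whence $\pi(\mathbb{P}_{\mathbb{P}^n}(E))=V\cap H_{\underline{x}}$, using $\pi(\mathbb{P}_{\mathbb{P}^n}(\mathcal{O}_{\mathbb{P}^n}(1)\oplus F^{\ast}))=V$ from Theorem~\ref{theorem:theorem conee} with $r=0$. Alternatively, one repeats the coordinate computation of Theorem~\ref{theorem:theorem conee} almost verbatim: a point $\underline{y}$ lies in the image iff it lies in some fiber $E^{\ast}_{\underline{\alpha}}$, i.e. iff $\underline{y}\in V$ and in addition the annihilator condition $\langle\underline{y},s(\underline{\alpha})\rangle=0$ holds, the latter being exactly $\underline{y}\in H_{\underline{x}}$. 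The delicate point in either route is verifying that $s_{\underline{x}}$ is the $\pi$-pullback of $\ell_{\underline{x}}$ (equivalently, tracing the annihilator condition through the isomorphism $\phi$); once the image is known, the identity $L_0\subset V$ gives $\pi(\mathbb{P}_{\mathbb{P}^n}(E))\cap L_0=V\cap H_{\underline{x}}\cap L_0=L_0\cap H_{\underline{x}}$, and the blow-up structure is immediate from Lemma~\ref{theorem:theorem key}.
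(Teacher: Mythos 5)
Your proposal is correct and follows essentially the same route as the paper: reduce to the blow-up map $\pi$ coming from Theorem~\ref{theorem:theorem conee} (the paper phrases the strict-transform step directly, you route it through Lemma~\ref{theorem:theorem key}, which amounts to the same thing), and then identify $\mathbb{P}_{\mathbb{P}^n}(E)=\pi^{-1}(H_{\underline{x}})$, which the paper does via a pushout diagram and you do via the pullback of sections or, equivalently, the fibrewise annihilator computation. One small remark: your verification that $H_{\underline{x}}\not\supset L_{0}$ via the odd coordinates (i.e.\ the $F^{\ast}$-component of $s_{\underline{x}}$) is the correct bookkeeping, and is in fact more precise than the one-line justification given in the paper, which refers to the even coordinates.
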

\begin{proof}
Since $\underline{x}$ satisfies eq.(11), not all $x_{2i}$'s can be zero. Thus, $H_{\underline{x}}\not \supset L_{0}$.
By Theorem \ref{theorem:theorem conee}, $  \pi : \mathbb{P}_{\mathbb{P}^{n}}(\mathcal{O}_{\mathbb{P}^n}(1)\oplus F^{\ast}) \longrightarrow V$ is the blow up along $L_0$.
Since $ \mathbb{P}_{\mathbb{P}^n}(E) $ is a codimension 1 subvariety of $\mathbb{P}_{\mathbb{P}^n}(\mathcal{O}_{\mathbb{P}^n}(1)\oplus F^{\ast})$, $ \mathbb{P}_{\mathbb{P}^n}(E)$ is the strict transform of $ \pi(\mathbb{P}_{\mathbb{P}^n}(E))$ under $ \pi$. In other words, $ \pi \mid_{\mathbb{P}_{\mathbb{P}^n}(E)}$ identifies $ \mathbb{P}_{\mathbb{P}^n}(E)$ with the blow-up of $ \pi(\mathbb{P}_{\mathbb{P}^n}(E))$ along $ \pi (\mathbb{P}_{\mathbb{P}^n}(E))\cap L_{0}.$ It remains to show that $ \pi(\mathbb{P}_{\mathbb{P}^n}(E)) = H_{\underline{x}} \cap V$. 
The vector $\underline{x}$ induces a map $ \mathcal{O}_{\mathbb{P}^n} \xlongrightarrow{\underline{x}} \mathcal{O}_{\mathbb{P}^n}^{2n+2}$ such that $ s$ is the composition:
\begin{equation*}
\mathcal{O}_{\mathbb{P}^n}\xlongrightarrow{\underline{x}} \mathcal{O}_{\mathbb{P}^n}^{2n+2}\longrightarrow \mathcal{O}_{\mathbb{P}^n}(1)\oplus F^{\ast}.
\end{equation*}

There is an induced map $\textnormal{Coker}\medspace {\underline{x}} \longrightarrow E$ such that we have a pushout diagram:
 \begin{center}
\begin{tikzcd}
\mathcal{O}_{\mathbb{P}^n}^{2n+2} \arrow[r] \arrow[d,"\psi"]
& \mathcal{O}_{\mathbb{P}^n}(1)\oplus F^{\ast} \arrow[d, "\eta" ] \\
\textnormal{Coker}\, \underline{x} \arrow[r, ]
& |[, rotate=0]| E
\end{tikzcd}
\end{center}
So, $\mathbb{P}_{\mathbb{P}^n}(E)= \mathbb{P}_{\mathbb{P}^n}(\mathcal{O}_{\mathbb{P}^n}(1)\oplus F^{\ast}) \cap \mathbb{P}_{\mathbb{P}^n}(\textnormal {Coker}\, \underline{x}$) in $ \mathbb{P}_{\mathbb{P}^n}(\mathcal{O}_{\mathbb{P}^n}^{2n+2})= \mathbb{P}^n\times \mathbb{P}^{2n+1}$, that is,
$$ \mathbb{P}_{\mathbb{P}^n}(E)= \{(\underline{\alpha},\underline{y})\in \mathbb{P}^{n}\times\mathbb{P}^{2n+1}\,|\;(\underline{\alpha}, \underline{y}) \in \mathbb{P}_{\mathbb{P}^n}(\mathcal{O}_{\mathbb{P}^n}(1)\oplus F^{\ast}),\, \underline{y} \in H_{\underline{x}}\} = \pi^{-1}(H_{\underline{x}}). $$
Thus, $\pi(\mathbb{P}_{\mathbb{P}^n}(E))= H_{\underline{x}}\cap V$. This completes the proof.
\end{proof}
As a corollary, we get the generalization in higher dimension of Theorem 3 in \cite{Va}.
 \begin{remark} Note that Theorems \ref{theorem:theorem conee} (with $r=0$) and \ref{theorem:theorem sing} gives another proof of Theorem \ref{theorem:theorem r} $(1)$.
 \end{remark}
 
 \begin{corollary}\label{theorem:theorem shivam}
  Let $d, n \geq 2$ be integers, and let
\begin{equation*}
V= \{\underline{x}\in \mathbb{P}^{2n+1} \,|\, x_{0}^{d-1}x_{1}+ x_{2}^{d-1}x_{3}+\ldots+ x_{2n}^{d-1}x_{2n+1}=0\}
\end{equation*}
\begin{equation*}
 L_{0} = \{ \underline{x} \in \mathbb{P}^{2n+1} \medspace | \medspace x_{0} = x_{2} = \ldots = x_{2n}=0 \}.
 \end{equation*}
 
 There is a nonempty open set $U_{n,d}$ of $\mathbb{P}^{2n+1}$ such that, for each point $\sigma\in U_{n,d}$ we have a rank $n$ vector bundle $E_{n,d,\sigma}$ over $\mathbb{P}^n$, and a hyperplane $H_\sigma$ in $\mathbb{P}^{2n+1}$ does not contain $L_0.$ The linear system $|\mathcal{O}_{\mathbb{P}(E_{n,d,\sigma})}(1)|$ defines a morphism $\mathbb{P}_{\mathbb{P}^n}(E_{n,d,\sigma})\to \mathbb{P}^{2n+1}$, which identifies $\mathbb{P}_{\mathbb{P}^n}(E_{n,d,\sigma})$ as the blow-up of $V\cap H_\sigma$ along $L_0\cap H_\sigma$.
 \end{corollary}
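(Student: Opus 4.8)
The plan is to read this corollary as the ``existence'' packaging of Theorem \ref{theorem:theorem sing} in the case $r=0$: essentially all of the geometric content is already proved there, and what remains is to manufacture the open set $U_{n,d}$ and to match the stated morphism with the one coming from $\pi$. Concretely, I would first recall from the discussion preceding Theorem \ref{theorem:theorem sing} that $H^0(\mathbb{P}^n,\mathcal{O}_{\mathbb{P}^n}(1)\oplus F^{\ast})\cong k^{2n+2}$, so that $\mathbb{P}^{2n+1}$ is canonically identified with the space $\mathbb{P}(H^0(\mathbb{P}^n,\mathcal{O}_{\mathbb{P}^n}(1)\oplus F^{\ast}))$ of sections of $\mathcal{O}_{\mathbb{P}^n}(1)\oplus F^{\ast}$ taken up to scalar. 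Each $\sigma\in\mathbb{P}^{2n+1}$ then corresponds to a section $s_\sigma$, well defined up to a nonzero constant, and I would define $U_{n,d}$ to be the locus of those $\sigma$ for which $s_\sigma$ is nowhere vanishing.

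Next I would verify that $U_{n,d}$ is open and nonempty. Openness follows from a standard incidence argument: the set $Z=\{(\underline{\alpha},\sigma)\in\mathbb{P}^n\times\mathbb{P}^{2n+1}\mid s_\sigma(\underline{\alpha})=0\}$ is closed, its projection to $\mathbb{P}^{2n+1}$ is closed because $\mathbb{P}^n$ is proper, and $U_{n,d}$ is the complement of this projection. Nonemptiness is exactly the observation recorded in the discussion preceding Theorem \ref{theorem:theorem sing}: since $\mathcal{O}_{\mathbb{P}^n}(1)\oplus F^{\ast}$ is globally generated of rank $n+1>n=\dim\mathbb{P}^n$, a general section is nowhere vanishing, by \cite[Theorem 2.8]{Ot}. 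I would also note that scaling $s_\sigma$ does not change its image $\mathcal{O}_{\mathbb{P}^n}\hookrightarrow\mathcal{O}_{\mathbb{P}^n}(1)\oplus F^{\ast}$, so the quotient bundle $E_{n,d,\sigma}$ defined by the short exact sequence of Theorem \ref{theorem:theorem sing} depends only on $\sigma$ and has rank $n$.

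With $U_{n,d}$ in hand, for each $\sigma\in U_{n,d}$ I would apply Theorem \ref{theorem:theorem sing} verbatim to a representative $\underline{x}$ of $\sigma$: this yields the hyperplane $H_\sigma:=H_{\underline{x}}$ with $L_0\not\subset H_\sigma$, and the statement that $\pi|_{\mathbb{P}_{\mathbb{P}^n}(E_{n,d,\sigma})}$ identifies $\mathbb{P}_{\mathbb{P}^n}(E_{n,d,\sigma})$ with the blow-up of $V\cap H_\sigma$ along $L_0\cap H_\sigma$. The final point is to recognize this morphism as the one attached to the complete linear system $|\mathcal{O}_{\mathbb{P}(E_{n,d,\sigma})}(1)|$: the line bundle $\mathcal{O}_{\mathbb{P}(\mathcal{O}_{\mathbb{P}^n}(1)\oplus F^{\ast})}(1)$ that defines $\pi$ restricts to $\mathcal{O}_{\mathbb{P}(E_{n,d,\sigma})}(1)$, and the map $H^0(\mathbb{P}^n,\mathcal{O}_{\mathbb{P}^n}(1)\oplus F^{\ast})\to H^0(\mathbb{P}^n,E_{n,d,\sigma})$ is surjective (from the defining sequence, using $H^1(\mathbb{P}^n,\mathcal{O}_{\mathbb{P}^n})=0$ since $n\geq 2$), so $\pi|_{\mathbb{P}(E_{n,d,\sigma})}$ is given by the full system. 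Here I expect the one genuinely delicate bookkeeping point: since $h^0(E_{n,d,\sigma})=2n+1$, the complete system maps $\mathbb{P}(E_{n,d,\sigma})$ to $\mathbb{P}^{2n}$, which is precisely the hyperplane $H_\sigma$, and the target $\mathbb{P}^{2n+1}$ in the statement is this map followed by the linear inclusion $H_\sigma\hookrightarrow\mathbb{P}^{2n+1}$. Everything else is formal, so the only real work is the nonemptiness of $U_{n,d}$ together with this identification of the morphism, both of which reduce to facts already established above.
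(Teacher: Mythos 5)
Your proposal is correct and follows essentially the same route as the paper: identify $\mathbb{P}^{2n+1}$ with the projectivized space of sections of $\mathcal{O}_{\mathbb{P}^n}(1)\oplus F^{\ast}$, take $U_{n,d}$ to be the (nonempty, by \cite[Theorem 2.8]{Ot}) open locus of nowhere vanishing sections, and invoke Theorem \ref{theorem:theorem sing}. The extra details you supply — the incidence argument for openness and the verification that $\pi|_{\mathbb{P}(E_{n,d,\sigma})}$ is given by the complete linear system $|\mathcal{O}_{\mathbb{P}(E_{n,d,\sigma})}(1)|$ landing in $H_\sigma\cong\mathbb{P}^{2n}$ — are points the paper leaves implicit, and you handle them correctly.
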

 \begin{proof}
 We have seen $H^{0}(\mathbb{P}^n, \mathcal{O}_{\mathbb{{P}}^n}(1)\oplus F^{\ast}) =k^{2n+2}$. There exists a nonempty open set $U_{n,d}$ of $\mathbb{P}^{2n+1}=\mathbb{P}(H^{0}(\mathbb{P}^n, \mathcal{O}_{\mathbb{{P}}^n}(1)\oplus F^{\ast})^*)$, such that any $\sigma\in U_{n,d}$ corresponds to a nowhere vanishing section $s$ in $H^{0}(\mathbb{P}^n, \mathcal{O}_{\mathbb{{P}}^n}(1)\oplus F^{\ast})$. Define $E_{n,d,\sigma}=E$, $H_\sigma=H_{\underline{x}}$ in the notation of Theorem \ref{theorem:theorem sing}. Now Theorem \ref{theorem:theorem sing} implies the corollary.
 \end{proof}
\vspace{10pt}

\begin{remark}\label{remark:remark cone}
Similarly as in Theorem \ref{theorem:theorem conee}, for all $ r\geq 1$, $ \mathbb{P}_{\mathbb{P}^n}( \mathcal{O}_{\mathbb{P}^n}^{r} \oplus E)$ is the blow-up of the singular quadric C$( Q) \hookrightarrow \mathbb{P}^{2n+r}$ along the linear subvariety $ \mathbb{P}^{n+r-1}\cong C(L) \hookrightarrow C(Q)$.
\end{remark}
\hspace{10pt}Now we can describe the blow-up structure on the projectivization of vector bundles as in $(iii)$ of Theorem \ref{theorem:theorem cl}.
\begin{theorem}\label{theorem:theorem 4.10}
Let $E$ be as in $(3)$ of Theorem \textnormal{\ref{theorem:theorem cl}}. Then $\mathbb{P}_{\mathbb{P}^n}(E)$ is the blow-up of a quadric in $\mathbb{P}^{n+r}$ along a linear subvariety of dimension $r-1$. The quadric is always smooth if $r=n$, and always singular if $r\geq n+2$.
\end{theorem}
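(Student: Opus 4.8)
The plan is to reduce everything to the constructions of \S 4 with $d=2$, where $F^{\ast}=T_{\mathbb{P}^n}(-1)$ and the quadric $V\subset\mathbb{P}^{2n+1}$ is smooth. Writing $E$ as in $(3)$ of Theorem \ref{theorem:theorem cl}, it is the cokernel of a nowhere-vanishing section $s$ of $\mathcal{O}_{\mathbb{P}^n}(1)\oplus T_{\mathbb{P}^n}(-1)\oplus\mathcal{O}_{\mathbb{P}^n}^{r-n}=\mathcal{O}_{\mathbb{P}^n}(1)\oplus F^{\ast}\oplus\mathcal{O}_{\mathbb{P}^n}^{r-n}$. I would split $s=(s',s'')$ according to its projection to the trivial summand, with $s''\in k^{r-n}$ constant, and treat two cases. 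Existence of such nowhere-vanishing sections is guaranteed by \cite[Theorem 2.8]{Ot}, the bundle being globally generated of rank $>n$.

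First I would dispose of the elementary linear algebra. If $s''\neq 0$, then $s$ is a split injection (its image meets the trivial summand in a subline on which the projection is an isomorphism), so $E\cong \mathcal{O}_{\mathbb{P}^n}(1)\oplus T_{\mathbb{P}^n}(-1)\oplus\mathcal{O}_{\mathbb{P}^n}^{r-n-1}$; here Theorem \ref{theorem:theorem conee} (with $r-n-1$ trivial summands) identifies $\mathbb{P}_{\mathbb{P}^n}(E)$ with the blow-up of the cone $C(V)\subset\mathbb{P}^{n+r}$ over the smooth quadric $V\subset\mathbb{P}^{2n+1}$, with vertex $\mathbb{P}^{r-n-2}$, along $C(L_0)\cong\mathbb{P}^{r-1}$. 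If $s''=0$, then $s'$ is a nowhere-vanishing section of $\mathcal{O}_{\mathbb{P}^n}(1)\oplus F^{\ast}$ and $E\cong E_0\oplus\mathcal{O}_{\mathbb{P}^n}^{r-n}$, where $E_0$ is the rank $n$ bundle of Theorem \ref{theorem:theorem sing}; then Remark \ref{remark:remark cone} identifies $\mathbb{P}_{\mathbb{P}^n}(E)$ with the blow-up of the cone $C(Q_0)\subset\mathbb{P}^{n+r}$ over the smooth quadric $Q_0=V\cap H_s\subset\mathbb{P}^{2n}$, with vertex $\mathbb{P}^{r-n-1}$, along a $\mathbb{P}^{r-1}$. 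The degenerate case $r=n$ is exactly Theorem \ref{theorem:theorem sing}, where the cone collapses and $Q_0=V\cap H_s$ is itself a smooth quadric in $\mathbb{P}^{2n}=\mathbb{P}^{n+r}$. In all cases the target is a quadric hypersurface in $\mathbb{P}^{n+r}$ blown up along a linear $\mathbb{P}^{r-1}$, which is the first assertion.

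The heart of the argument is the smoothness dichotomy, which I would settle by computing the rank of the defining quadratic form, equivalently the dimension of the singular locus of each cone (its projectivized radical, which for a cone over a smooth quadric is exactly the vertex). A cone over a smooth quadric of rank $\rho$ inside $\mathbb{P}^{n+r}$, that is in $n+r+1$ homogeneous variables, has radical of dimension $(n+r+1)-\rho$. In the first case $\rho=2n+2$ (the rank of $V$), so the singular locus is $\mathbb{P}^{r-n-2}$; in the second case $\rho=2n+1$ (the rank of the smooth quadric $Q_0\subset\mathbb{P}^{2n}$), so it is $\mathbb{P}^{r-n-1}$. Thus the quadric is smooth precisely when the radical vanishes: in the first case iff $r=n+1$, in the second iff $r=n$. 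Reading off the consequences: when $r=n$ only the (collapsed) second case occurs and the quadric is the smooth $V\cap H_s$; when $r\geq n+2$ both singular loci $\mathbb{P}^{r-n-2}$ and $\mathbb{P}^{r-n-1}$ are nonempty, so the quadric is always singular. At the threshold $r=n+1$ it is smooth in the first case and singular in the second, consistent with the theorem making no claim there.

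The main obstacle I anticipate is not the reduction but this final rank bookkeeping. One must correctly track how the rank of the quadratic form behaves under coning and, should one prefer a uniform treatment via a single hyperplane section of $C(V)\subset\mathbb{P}^{n+r+1}$ rather than the case split, how the rank drops upon restriction to the hyperplane $H_s$ cut out by $s$. In that uniform formulation the nowhere-vanishing hypothesis on $s$ is exactly what controls whether $H_s$ passes through the vertex of $C(V)$, while the self-duality of the hyperbolic form $\sum_i x_{2i}x_{2i+1}$ forces $H_s$ to be non-tangent, pinning the rank of the section down to $2n+2$ or $2n+1$ as above; keeping these two mechanisms straight is where I expect the care to be needed.
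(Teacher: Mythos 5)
Your proposal is correct and follows essentially the same route as the paper's proof: you split the nowhere-vanishing section according to its component in the trivial summand $\mathcal{O}_{\mathbb{P}^n}^{r-n}$, reduce the two resulting cases to Theorem \ref{theorem:theorem conee}, Theorem \ref{theorem:theorem sing} and Remark \ref{remark:remark cone}, and read off smoothness from the dimension of the cone vertex, exactly as the paper does (your rank bookkeeping for the vertices $\mathbb{P}^{r-n-2}$ and $\mathbb{P}^{r-n-1}$, and your remark on the threshold $r=n+1$, are just a more explicit version of the paper's final sentence).
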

\begin{proof}
For $r=n$, Theorem \ref{theorem:theorem r} $(ii)$ completes the proof. So assume $r-n\geq1.$\\
Let $ \tilde{s}$: $\mathcal{O}_{\mathbb{P}^{n}}\xlongrightarrow{(\underline{\lambda}, s)} \mathcal{O}_{\mathbb{P}^n}^{r-n}\oplus \mathcal{O}_{\mathbb{P}^n}(1)\oplus T_{\mathbb{P}^n}(-1)$ be a nowhere vanishing section, where $\underline{\lambda}\in k^r$, considered as a section of $\mathcal{O}_{\mathbb{P}^n}^{r}$, and $s$ is a section of $\mathcal{O}_{\mathbb{P}^n}(1)\oplus T_{\mathbb{P}^n}(-1)$. We consider two cases:

Case(1): $ \underline{\lambda} =0 .$

Then $s$ is nowhere vanishing, so $E = \textnormal{Coker}\medspace \tilde{s} \cong \mathcal{O}_{\mathbb{P}^{n}}^{r-n}\oplus\textnormal{Coker} \,s$. So, $ \mathbb{P}_{\mathbb{P}^n}(E)$ is the blow-up of a singular quadric in $ \mathbb{P}^{n+r}$ along a linear subvariety of dimension  $ r-1$, by Remark \ref{remark:remark cone}.

Case(2): $ \underline{\lambda} \neq 0$, say $ \lambda_{1} \neq 0$.

We have the following exact sequence:

$
\scriptstyle{0\longrightarrow \mathcal{O}_{\mathbb{P}^n}\xlongrightarrow{(\lambda_{1}, s_{1}) = \tilde{s}}\mathcal{O}_{\mathbb{P}^n}^{r-n}\oplus\mathcal{O}_{\mathbb{P}^n}(1)\oplus T_{\mathbb{P}^{n}}(-1) =  \mathcal{O}_{\mathbb{P}^n}\oplus(\mathcal{O}_{\mathbb{P}^n}^{r-n-1}\oplus \mathcal{O}_{\mathbb{P}^n}(1) \oplus T_{\mathbb{P}^n}(-1))\longrightarrow
\mathcal{O}_{\mathbb{P}^n}^{r-n-1}\oplus \mathcal{O}_{\mathbb{P}^n}(1)  \oplus T_{\mathbb{P}^n}(-1)  \longrightarrow 0}.$

\hspace{300pt}{$\scriptstyle (\beta, t)\mapsto t-\frac{\beta}{\lambda_{1}}s_{1}$}

So, $E_{1}= \textnormal{Coker} (\tilde{s})= \mathcal{O}_{\mathbb{P}^n}^{r-n-1}\oplus \mathcal{O}_{\mathbb{P}^n}(1)\oplus T_{\mathbb{P}^n} (-1).$ Therefore, $ \mathbb{P}_{\mathbb{P}^n}(E_{1})$ is the blow-up of a quadric $ Q \hookrightarrow \mathbb{P}^{n+r}$ along a linear subvariety of dimension $ r-1$. The quadric $Q$ is smooth if and only if $ r=n+1$. 
\end{proof}

\section{Projective bundles over projective plane with birational contractions to projective spaces}
We start with a linear algebraic lemma. For positive integers $a,b$, denote the space of $a\times b$ matrices with entries in $k$ by $M_{a\times b}(k).$
\begin{lemma} \label{theorem:linalg}
Let $s\geq1$ be an integer. Let $A,B \in M_{(s+2)\times 3}(k)  $, such that \emph{rk}($uA+vB)=3,\, \text{for all}\; 0\neq(u,v)\in k\times k$,\, \text{and}\, \emph{rk}$ [A:B]= r+2$. Then $ 2\leq r \leq 4$, and there exist $P\in \emph{GL}(r+2,k)$, $ Q \in \emph{GL}(3, k)$ such that $ PAQ = [e_{0}: e_{1}: e_{2}]$ and $PBQ=[e_{r-1}: e_{r}: e_{r+1}]$. In other words, there is a basis $ \{v_{0},v_{1},v_{2} \}$ of $ k^3$ and a basis $ \{ w_{0},w_{1},\dots,w_{s+1}\}$ of $ k^{s+2}$  such that $ A v_{i}=w_{i}$ and $ Bv_{i}=w_{r-1+i}$, for all $0\leq i\leq 2$.  
\end{lemma}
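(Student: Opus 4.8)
The plan is to read the two hypotheses geometrically. Since $A=1\cdot A+0\cdot B$ and $B=0\cdot A+1\cdot B$ occur in the pencil, the rank condition forces $A$ and $B$ to be injective as maps $k^{3}\to k^{s+2}$, with $3$-dimensional column spaces $C_A:=\mathrm{col}(A)$ and $C_B:=\mathrm{col}(B)$. The hypothesis $\mathrm{rk}[A:B]=r+2$ says $\dim(C_A+C_B)=r+2$, so I first restrict attention to $W:=C_A+C_B$ and produce a basis $w_0,\dots,w_{r+1}$ of $W$, completing it later to a basis $w_0,\dots,w_{s+1}$ of $k^{s+2}$ by adjoining arbitrary vectors; identifying $W$ with $k^{r+2}$ then yields the matrix form with $P\in\mathrm{GL}(r+2,k)$.

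The first substantive step is the bound $2\le r\le 4$. From $\dim(C_A+C_B)=r+2$ and $\dim C_A=\dim C_B=3$ I get $\dim(C_A\cap C_B)=4-r$, which forces $1\le r\le 4$. To exclude $r=1$ I note that $r=1$ means $C_A=C_B$, hence $B=AN$ for a unique $N\in\mathrm{End}(k^{3})$; then $uA+vB=A(uI+vN)$, and since $k=\mathbb{C}$ is algebraically closed $N$ has an eigenvalue $\mu$, so $-\mu A+B=A(N-\mu I)$ is singular, contradicting the pencil hypothesis. This is the one place where algebraic closedness is essential.

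The heart of the proof is to construct the two bases realizing $Av_i=w_i$ and $Bv_i=w_{r-1+i}$; equivalently, I must exhibit the pencil in the Kronecker normal form $L_3^{\top}$ (for $r=2$), $L_2^{\top}\oplus L_1^{\top}$ (for $r=3$), or $(L_1^{\top})^{\oplus 3}$ (for $r=4$), the partition of $3$ into $r-1$ parts being unique in each case. I would do this by a direct construction rather than quoting Kronecker's theorem. The case $r=4$ is immediate: $C_A\cap C_B=0$, so for any basis $v_0,v_1,v_2$ of $k^{3}$ the vectors $w_i:=Av_i$ and $w_{3+i}:=Bv_i$ form a basis of $W$. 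For $r=3$, $C_A\cap C_B$ is a line; choosing $v_2,v_0$ with $Av_2=Bv_0$ spanning it (these are independent by the same argument that excluded $r=1$) and completing with an arbitrary $v_1$ gives the required basis after a short independence check.

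I expect the main obstacle to be $r=2$, where $U:=C_A\cap C_B$ is a plane and I must produce a single length-$3$ chain $Av_0=w_0,\ Bv_0=Av_1=w_1,\ Bv_1=Av_2=w_2,\ Bv_2=w_3$. The plan is to define the partial shift $\sigma:=A^{-1}B$ on $S_B:=B^{-1}(U)$ (a $2$-plane, with $\sigma(S_B)=A^{-1}(U)=:S_A$ also a $2$-plane), to prove $S_A\ne S_B$ — otherwise the pencil would restrict to a pencil of isomorphisms $S\to U$ whose normalization $uI+vM$ is singular at an eigenvalue of $M$, again using algebraic closedness — and then to set $v_1$ equal to a generator $z$ of the line $S_A\cap S_B$, with $v_0:=\sigma^{-1}(z)$ and $v_2:=\sigma(z)$. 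The chain relations then hold by construction, and the bulk of the remaining work is the linear independence of $\{v_0,v_1,v_2\}$ and of $\{w_0,w_1,w_2,w_3\}$: every hypothetical relation can be pushed, via $A$ or $B$, into $C_A\cap C_B$ and then contradicted by the full-rank hypothesis on a suitable combination $uA+vB$. Assembling the three cases, reading off $Q=[v_0\,|\,v_1\,|\,v_2]$ and $P$ from the constructed bases, and extending $\{w_j\}$ to a basis of $k^{s+2}$ completes the argument.
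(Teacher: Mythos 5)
Your proposal is correct and follows essentially the same route as the paper: reduce to $s=r$ by restricting to $\mathrm{Im}\,A+\mathrm{Im}\,B$, rule out $r=1$ via an eigenvalue of the endomorphism relating $A$ and $B$, and handle $r=4,3,2$ by the same case analysis on $\dim(\mathrm{Im}\,A\cap\mathrm{Im}\,B)$ — in particular your $r=2$ construction via $\sigma=A^{-1}B$ and the line $S_A\cap S_B$ is exactly the paper's choice of $v_1\in A^{-1}W\cap B^{-1}W$ with $v_0,v_2$ its preimage and image. The only cosmetic differences are the Kronecker normal form framing and that the paper verifies the independence claims by span/dimension counting rather than by pushing relations into $C_A\cap C_B$.
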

     
\begin{proof}
By a change of basis of $k^{s+2}$, we can assume that Im $A+$ Im $B=k^{r+2}\subset k^{s+2}.$ It suffices to find bases of $k^3$ and $k^{r+2}$ so that the condition in the lemma is satisfied. We can extend the basis of $k^{r+2}$ to a basis of $k^{s+2}$, and the condition in the lemma remains satisfied. Thus, without loss of generality, we can assume $s=r.$

Case 1: $r=4$

In this case, $[A:B]$ is a nonsingular $ 6\times 6$ matrix. So $ P= [A:B]^{-1}$ does the job. 

Case 2: $r = 3$

Let Im $A$, Im $B$  be the images of $ k^{3} \xlongrightarrow{A} k^{5}$, $ k^{3} \xlongrightarrow{B} k^{5}$. Since rk $[A:B]= r+2$, we have Im $A + $ Im $B= k^5$. Also, putting $(u,v)=(1,0)$ and $(0,1)$ in the hypothesis, we see that $A$ and $B$ have rank 3, so Im $A$ $\cap$ Im $B$ has dimension 1. Let $ w_{2} \in \textnormal{Im}\,{A}\, \cap\, \textnormal{Im}\,{B} $ be nonzero. Let $ v_{0}, v_{2} \in k^{3}$ be such that $ Av_{2}=w_{2} = Bv_{0}.$ If $ v_{0},v_{2}$ are linearly dependent, say $v_0=av_2$, then $(A-aB)v_2=0$, contradicting our hypothesis that rk($A-aB) =3$. So $ v_{0},v_{2}$ are linearly independent. Let $ v_{1}\in k^{3}$ be such that \{$ v_{0},v_{1}, v_{2}\}$ is  a basis of $k^{3}$. Let $w_{0} = Av_{0}, w_{1} = Av_{1}, w_{3} = Bv_{1}, w_{4} = Bv_{2}$. Then Span$\{w_{0},\ldots,w_{4}\}$ contains Span$\{w_{0}, w_{1}, w_{2}\} = \textrm{Im}\, A,$ and also contains Span$\{w_{2}, w_{3}, w_{4}\} = \textrm{Im}\, B$. Since Im $A$ + Im $B$ = $k^{5}$, $\{w_{0},\ldots,w_{4}\}$ is a basis of $k^{5}$.  

Case 3: $r=2$.

We have $W = \textrm{Im}\,A\cap \textrm{Im}\,B$ has dim$\,2$. Suppose $A^{-1}W = B^{-1}W = S\subseteq V$, then we have  dim $S = \textrm{dim}\,W = 2$ and 

$$A, B: S\longrightarrow W$$ 
are invertible linear transformations. So there exists $\lambda \in k$ such that linear transformation $A+\lambda B: S\longrightarrow W$ is not invertible. Therefore, the matrix $A+\lambda B$ can't have rank 3, as it has a non-empty kernel. Hence, $A^{-1}W \neq B^{-1}W$ and  $A^{-1}W$ and $B^{-1}W$ are distinct dimension 2 subspaces of $k^{3}$, which implies that dim$\,(A^{-1}W\cap B^{-1}W) = 1$,  and $\,(A^{-1}W + B^{-1}W) = k^{3}$. Let $v_{1}\in A^{-1}W\cap B^{-1}W$ be nonzero and $w_{1} = Av_{1}, w_{2} = Bv_{1}$. Note that $w_{1}, w_{2}\in W$ are linearly independent, because $(uA+vB)(v_1)\neq0,\, \text{for all}\; 0\neq(u,v)\in k\times k$. So, $w_{1}, w_{2}$ is a basis of $W$. Therefore, there are $v_{0}, v_{2}\in k^{3}$ with $Av_{2} = w_{2}, Bv_{0} = w_{1}$. Let $w_{0} = Av_{0}, w_{3} = Bv_{2}$. 

Claim: The sets $\{v_{0}, v_{1}, v_{2}\}$ and $\{w_{0},\ldots,w_{3}\}$ are bases of $k^{3}$ and $k^{4}$, respectively.

Proof of Claim : Clearly, $B(\textrm{Span}\{v_{0}, v_{1}\}) = \textrm\{w_{1}, w_{2}\} = W$. So, Span$\{v_{1}, v_{2}\} = A^{-1}W$. Hence, Span$\{v_{0}, v_{1}, v_{2}\}$ contains $A^{-1}W$, $B^{-1}W$, so Span$\{v_{0}, v_{1}, v_{2}\} = k^{3}.$ Thus, $\{v_{0}, v_{1}, v_{2}\}$ is a basis of $V$. Span$\{w_{0},\ldots,w_{3}\}$ contains both Im $A$ and Im $B$, so Span$\{w_{0},\ldots,w_{3}\}=k^4$. Hence, $\{w_{0},\ldots,w_{3}\}$ is a basis of $k^{4}$. Therefore, we are done.

 Case 4: $r=1$

 There is $P\in \textnormal{GL}(3,k)$ such that $A=BP,$ as $A$ and $B$ have the same column space. If $\lambda$ is an eigenvalue of $P$, $A-\lambda B=B(P-\lambda I)$ does not have rank $3$, a contradiction.

 So $2\leq r\leq 4$.
\end{proof}
\hspace{10pt}Now we describe the extremal contractions of the projectivization of the vector bundle in $(v)$ of Theorem \ref{theorem:theorem cl}.

\begin{theorem}\label{theorem:theorem nav}
 Let $E, r$ be as in $(5)$ of Theorem \textnormal{\ref{theorem:theorem cl}}, and let $t$ be the smallest integer such that $E$ has a trivial bundle of rank $r-t$ as a direct summand.  If $n\geq 3$, the other contraction of $\mathbb{P}_{\mathbb{P}^n}(E)$ is of fibre type onto $\mathbb{P}^{r+1}$ with general fibre $\mathbb{P}^{n-2}$. If $n=2$, then $2\leq t\leq 4,$ and the other contraction of $\mathbb{P}_{\mathbb{P}^n}(E)$ is birational onto $\mathbb{P}^{r+1}$. Up to an automorphism of $\mathbb{P}^{r+1}$, the fundamental locus is the cone over $V$, where \begin{equation*}
V = \begin{cases}
\textnormal{twisted cubic}, & \quad \textnormal{if}\quad t = 2, \\
Z(X_{0}X_{3} - X_{1}X_{2}, X_{1}X_{4} - X_{2}X_{3}, X_{0}X_{4} - X_{2}^{2})\hookrightarrow \mathbb{P}^{4}, & \quad\textnormal{if}\quad t = 3, \\
\textnormal{image of Segre embedding}\, \mathbb{P}^{1}\times\mathbb{P}^{2}\hookrightarrow\mathbb{P}^{5}, & \quad \textnormal{if}\quad t = 4.
\end{cases}
\end{equation*} 
$\mathbb{P}_{\mathbb{P}^n}(E)$ is a smooth blow-up if and only if $n=2, r=t$.
\end{theorem}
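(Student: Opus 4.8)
The plan is to realize the other contraction explicitly as the morphism attached to $\mathcal{O}_{\mathbb{P}(E)}(1)$ and to read its fibres off the presentation of $E$. From the defining sequence of case $(5)$ one has $H^0(\mathbb{P}^n,E)=k^{r+2}$ (since $H^0(\mathcal{O}_{\mathbb{P}^n}(-1)^2)=H^1(\mathcal{O}_{\mathbb{P}^n}(-1)^2)=0$ for $n\ge 2$), so $\mathcal{O}_{\mathbb{P}(E)}(1)$ is globally generated and $|\mathcal{O}_{\mathbb{P}(E)}(1)|$ defines a morphism $\pi:\mathbb{P}_{\mathbb{P}^n}(E)\to\mathbb{P}^{r+1}$; being nef but contracting curves, $\mathcal{O}_{\mathbb{P}(E)}(1)$ spans the extremal ray complementary to the bundle ray, so $\pi$ is the other contraction. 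Dualizing $\mathcal{O}^{r+2}\twoheadrightarrow E$ gives $0\to E^*\to\mathcal{O}^{r+2}\xrightarrow{L(\alpha)}\mathcal{O}(1)^2\to 0$, where $L(\alpha)$ is a $2\times(r+2)$ matrix of linear forms, and a point of $\mathbb{P}(E)$ over $\alpha$ is a line $[\underline y]\subset E^*_\alpha$, i.e.\ a solution of $L(\alpha)\,\underline y=0$. Writing $L(\alpha)$ in terms of constant matrices $A,B$ so that $L(\alpha)\,\underline y=N(\underline y)\,\alpha$ with $N(\underline y)=\left(\begin{smallmatrix}\underline y^{\mathsf T}A\\ \underline y^{\mathsf T}B\end{smallmatrix}\right)$ a $2\times(n+1)$ matrix linear in $\underline y$, one finds $\pi^{-1}([\underline y])=\mathbb{P}(\ker N(\underline y))$. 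Since $\operatorname{rk}N(\underline y)\le 2<n+1$, every fibre is nonempty, so $\pi$ is surjective, and a dimension count forces the general fibre to be the linear space $\mathbb{P}^{n-2}$. This yields the fibre-type statement for $n\ge 3$ and birationality for $n=2$.

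For the identification of $V$ when $n=2$, I would first reduce to the essential bundle. Writing $E=E'\oplus\mathcal{O}^{\,r-t}$ with $E'$ of rank $t$ having no trivial summand, Lemma~\ref{theorem:sumtriv} shows $\pi$ is the iterated cone over the analogous morphism $\pi'$ for $E'$, so the fundamental locus of $\pi$ is the cone over that of $\pi'$. Now $E'$ is presented by matrices $A,B\in M_{(t+2)\times 3}(k)$, and the two defining properties translate exactly into the hypotheses of Lemma~\ref{theorem:linalg}: local freeness of $E'$ is equivalent to $\operatorname{rk}(uA+vB)=3$ for all $(u,v)\ne 0$ (both say there is no nonzero $(u,v)$ and nonzero $\alpha$ with $(uA+vB)\alpha=0$), while the absence of a trivial summand is equivalent to $\operatorname{rk}[A:B]=t+2$. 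Hence $2\le t\le 4$ and, in the normal form of the lemma, $N(\underline y)=\left(\begin{smallmatrix} y_0&y_1&y_2\\ y_{t-1}&y_t&y_{t+1}\end{smallmatrix}\right)$. The fundamental locus of $\pi'$ is the degeneracy locus $\{\operatorname{rk}N(\underline y)\le 1\}$, cut out by the $2\times2$ minors; these are precisely the equations of the twisted cubic in $\mathbb{P}^3$ $(t=2)$, of $Z(X_0X_3-X_1X_2,\,X_1X_4-X_2X_3,\,X_0X_4-X_2^2)$ in $\mathbb{P}^4$ $(t=3)$, and of the Segre variety $\mathbb{P}^1\times\mathbb{P}^2\subset\mathbb{P}^5$ $(t=4)$, i.e.\ exactly the $V$ of the statement.

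For the smooth blow-up dichotomy I would argue as follows. A smooth blow-up structure on $\mathbb{P}(E)$ has Picard rank $2$, so its blow-down is an extremal contraction contracting a divisor; since $p$ is never birational for $r\ge2$, this blow-down must be $\pi$. When $n\ge3$, $\pi$ is of fibre type, so no smooth blow-up structure exists. When $n=2$ and $r>t$, the fundamental locus is the cone over $V$, which is singular at its vertex because $V$ is a positive-dimensional, non-linear smooth variety; as the center of a smooth blow-up equals the fundamental locus, no smooth blow-up structure exists. When $n=2$ and $r=t$, the locus $V$ is smooth of codimension $2$ and every positive-dimensional fibre of $\pi$ is a $\mathbb{P}^1$; computing the length of the contracted ray by adjunction (as in the proof of Theorem~\ref{theorem:theorem drum}) gives $\ell(R)=1$, and \cite[Theorem 5.2]{AO} then identifies $\pi$ as the blow-up of $\mathbb{P}^{t+1}$ along the smooth subvariety $V$.

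The main obstacle is this last case: upgrading ``$\pi$ is birational with smooth fundamental locus'' to ``$\pi$ is a smooth blow-up.'' This is exactly where the normal form of Lemma~\ref{theorem:linalg} is indispensable, since it simultaneously bounds $t$, exhibits $V$ explicitly, and guarantees its smoothness, and where the Andreatta--Occhetta contraction criterion is needed. The key preliminary bridge is the verification that the hypotheses of Lemma~\ref{theorem:linalg} are equivalent to local freeness of $E'$ together with the absence of a trivial summand.
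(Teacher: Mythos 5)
Your proposal is correct and follows the paper's argument in all essentials: the same translation of the presentation of $E$ into a pair of matrices $A,B$ and the fibre computation $\pi^{-1}([\underline y])=\mathbb{P}(\ker N(\underline y))$, the same identification of $t$ with $\operatorname{rk}[A:B]-2$, the same appeal to Lemma~\ref{theorem:linalg} for the bound $2\leq t\leq 4$ and the normal form exhibiting $V$ as the degeneracy locus, and the same reduction to the trivial-summand-free case via Lemma~\ref{theorem:sumtriv}. The one point where you genuinely diverge is the key Claim that $\mathbb{P}_{\mathbb{P}^2}(E_1)\to\mathbb{P}^{t+1}$ is the blow-up along $V$: the paper deduces this from \cite[Theorem 1.1 and Proposition 1.3]{ESB} (birational, isomorphism off $V$, $\mathbb{P}^1$-fibres over smooth $V$), whereas you compute $\ell(R)=1$ by adjunction and invoke \cite[Theorem 5.2]{AO}; both routes are valid (the adjunction computation does go through, since $\mathbb{P}(E_1)$ is a complete intersection of two $(1,1)$-divisors in $\mathbb{P}^2\times\mathbb{P}^{t+1}$ and a contracted fibre has bidegree $(1,0)$), and yours has the small merit of reusing the criterion already employed in the proof of Theorem~\ref{theorem:theorem drum}. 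You also spell out the ``only if'' direction of the smooth blow-up dichotomy (singularity of the cone vertex when $r>t$) more explicitly than the paper does, which is a welcome addition rather than a deviation.
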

\begin{proof}
Let $r\geq n \geq 2$ be integers. Let the vector bundle $E$ be as in $(5)$ of Theorem \ref{theorem:theorem cl}:
\begin{equation*}
0\longrightarrow\mathcal{O}_{\mathbb{P}^n}(-1)^{2}\xlongrightarrow{s}\mathcal{O}_{\mathbb{P}^n}^{\,r+2}\longrightarrow E\longrightarrow 0.     
\end{equation*}
After taking the dual, $s$ corresponds to a surjection $\mathcal{O}_{\mathbb{P}^n}^{r+2}\xlongrightarrow{s^{*}}\mathcal{O}_{\mathbb{P}^n}(1)^{2}$. Let $\underline{\alpha} = [\alpha_{0}: \alpha_{1}:\ldots: \alpha_{n}]$ be the homogeneous coordinates in $\mathbb{P}^{n}$. There are matrices $A, B\in M_{(r+2)\times (n+1)}(k)$ such that $s^{*}(\underline{x}) = (\underline{x}^{t}A\underline{\alpha}, \underline{x}^{t}B\underline{\alpha})$. Then the following are equivalent:

\begin{enumerate}
\item [(i)] s$^{*}$ is surjective.
\item [(ii)] For all $\underline{\alpha}\neq 0, A\underline{\alpha}, B\underline{\alpha}$ are linearly independent.
\item [(iii)] For all  $0 \neq (u, v)\in k^{2}$, rk$(uA+vB) = n+1$.
\end{enumerate}

Let $f: \mathbb{P}_{\mathbb{P}^n}(E)\longrightarrow\mathbb{P}^{n}$ be the projection. Then map $\pi: \mathbb{P}_{\mathbb{P}^n}(E)\hookrightarrow\mathbb{P}_{\mathbb{P}^n}(\mathcal{O}_{\mathbb{P}^n}^{r+2})\longrightarrow\mathbb{P}^{r+1}$ has the fibers  
\begin{equation} 
\pi^{-1}([\underline{x}])\cong f\pi^{-1}([\underline{x}]) = \{\underline{\alpha}\in\mathbb{P}^{n}\big{|}\underline{x}^{t}A\underline{\alpha} = \underline{x}^{t}B\underline{\alpha} = 0\}
\end{equation}
$$
\cong 
\begin{cases}
\mathbb{P}^{n-2} , & \quad \textnormal{if}\quad \textrm{rk}\begin{pmatrix}
  \underline{x}^{t}A \\
  \underline{x}^{t}B \\
\end{pmatrix} = 2, \\
\mathbb{P}^{n-1}, & \quad\textnormal{if}\quad \textrm{rk}\begin{pmatrix}
  \underline{x}^{t}A \\
  \underline{x}^{t}B \\
\end{pmatrix}  = 1, \\
\mathbb{P}^{n} , & \quad \textnormal{if}\quad \underline{x}^{t}A=\underline{x}^{t}B=0.

\end{cases}
$$

From now on let us assume  $n=2$, so dim$\,\mathbb{P}_{\mathbb{P}^n}(E) = r+1 = \textrm{dim}\,\mathbb{P}^{\,r+1}$, and $\pi$ is surjective. Thus, $\pi$ is generically finite, and $\pi^{-1}([\underline{x}])\cong \mathbb{P}^{\, 0} = \{\textrm{pt}\}$ for general $[\underline{x}]\in\mathbb{P}^{\,2}$, i.e., $\pi$ is birational. The fundamental locus of $\pi$ is $S = \{\underline{X}\in \mathbb{P}^{\, r+1}\big{|}\:\textrm{rk}\begin{pmatrix}
  X^{t}A \\
  X^{t}B \\
\end{pmatrix}\} \leq 1$\}; an intersection of 3 quadrics, given by the determinants of the $2\times2$ minors of 
$
\begin{pmatrix}
  X^{t}A \\
  X^{t}B \\
\end{pmatrix}.
$ 
 Also, note that
 
 $E = \mathcal{O}_{\mathbb{P}^2}^{r-s} \oplus E_{1} $ for some bundle $E_1 \Longleftrightarrow   (v)  $ is the direct sum of $ 0 \longrightarrow 0 \longrightarrow \mathcal{O}_{\mathbb{P}^2} ^{r-s}   \longrightarrow \mathcal{O}_{\mathbb{P}^2}^{r-s} \longrightarrow 0$

\hspace{100pt} with an sequence $ 0 \longrightarrow \mathcal{O}_{\mathbb{P}^2}(-1)^{2} \longrightarrow \mathcal{O}_{\mathbb{P}^2}^{s+2} \longrightarrow E_{1} \longrightarrow 0 $ 

 \hspace{100pt} ( as the map $ \mathcal{O}_{\mathbb{P}^2} ^{r+2} \longrightarrow E $ in $ (v)$ is the evaluation map 

\hspace{120pt} $ H^{0}( \mathbb{P}^2 , E) \bigotimes \mathcal{O}_{\mathbb{P}^2}\longrightarrow E  $)

 \hspace{77pt} $ \Longleftrightarrow s^{\ast}$ is zero on a trivial bundle of $ \mathcal{O}_{\mathbb{P}^2}^{r+2}$  of rank  $ r-s$

  \hspace{77pt} $ \Longleftrightarrow $ dim $ \{ \underline{x} \in k^{r+2}  \hspace{2pt} |\hspace{2pt} x^{t}[A:B]=0 \} \geq r-s $

  \hspace{77pt} $ \Longleftrightarrow $ rk $ [A:B] \leq s+2 .$

  So $ t= \textrm{rk}\, [A:B]-2$.

By Lemma \ref{theorem:linalg}, $ 2\leq t \leq 4$ and there exists $P\in \textrm{GL($r+2, k$)}$, $Q\in \textrm{GL($3, k$)}$ such that $X^{t}PAQ = (X_{0}, X_{1}, X_{2})$ and  $ X^{t}PBQ = (X_{t-1}, X_{t}, X_{t+1})$. So, there is an automorphism $\phi$ of $\mathbb{P}^{2}$ such that $\phi^{*}E\cong \mathcal{O}_{\mathbb{P}^2}^{r-t}\oplus E_{1}.$ Here $E_{1}$ fits into an exact sequence
\begin{equation*}
0\longrightarrow \mathcal{O}_{\mathbb{P}^2}(-1)^{2}\xlongrightarrow{s_{1}}\mathcal{O}_{\mathbb{P}^2}^{t+2}\longrightarrow E_{1}\longrightarrow 0,
\end{equation*}
where $s_1$ is given by
\begin{equation*}
\mathcal{O}_{\mathbb{P}^2}^{t+2}\xlongrightarrow{s_{1}^{*}}\mathcal{O}_{\mathbb{P}^2}(1)^{2}     
\end{equation*}
defined by 
\begin{equation*}
 \underline{x}\longmapsto (x_{0}\alpha_{0}+x_{1}\alpha_{1}+x_{2}\alpha_{2}, x_{t-1}\alpha_{0}+x_{t}\alpha_{1}+x_{t+1}\alpha_{2}).   
\end{equation*}

\textbf{Claim:}
The induced map $\mathbb{P}_{\mathbb{P}^2}(E_{1})\xlongrightarrow{\pi_{1}} \mathbb{P}^{t+1}$ is the blow-up along $V$, where $V$ is as in the statement of the theorem.

\begin{proof}
This result is essentially the main content of \cite{Ra}. This also follows from  \cite[Theorem 1.1 and Proposition 1.3]{ESB}, as follows. 

Let $Q_{0} = X_{1}X_{t+1} - X_{2}X_{t}$, $Q_{1} = X_{2}X_{t-1} - X_{0}X_{t+1}$, and $Q_{2} = X_{0}X_{t} - X_{1}X_{t-1}$. Thus, $V = Z(Q_{0}, Q_{1}, Q_{2})$. It can be easily seen that $V$ is smooth, and $V$ is the same as in the statement of Theorem \ref{theorem:theorem nav}. By $(12)$ (applied to $E_1$ instead of $E$), we see that $\pi$ is an isomorphism outside $V$, and each fibre of $\pi$ over $V$ is $\mathbb{P}^1$. Now \cite[Theorem 1.1 and Proposition 1.3]{ESB} completes the proof.
\end{proof}

\hspace{10pt}Now, Lemma \ref{theorem:sumtriv}, implies that, up to an automorphism of $\mathbb{P}^{r+1}, \mathbb{P}_{\mathbb{P}^2}(E)\xlongrightarrow{\pi}\mathbb{P}^{r+1}$ is birational with fundamental locus the cone over $V$. 
\end{proof}
\section{Geometric Construction of rooftop flips} The main theorem of this section is the following.
\begin{theorem}

In the same hypothesis and notations of Theorem \ref{theorem:theorem drum}, let $W_{\pm}=\mathbb{P}_{Y_{\pm}}(\mathcal{O}\oplus \mathcal{E}_{\pm}(L_{\pm}))$, $W=\mathbb{P}_Y(\mathcal{O}\oplus \mathcal{L}_{+}\otimes \mathcal{L}_{-}),$ $W_0=$ normalization of projective cone over the embedding $i:Y\xhookrightarrow{p_-\times p_+}Y_-\times Y_+\hookrightarrow \mathbb{P}(H^0(Y_-,L_-)\otimes H^0(Y_+,L_+)).$ Then there is a rooftop flip modeled by $Y$ as in \cite[Definition 3.2]{LF}, with $W, W_{\pm}, W_0$ as above. If dim$Y_-> $dim$Y_+$, $\psi$ is a flip. If dim$Y_-= $dim$Y_+$, $\psi$ is a flop.
 \end{theorem}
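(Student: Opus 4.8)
The plan is to realize the full rooftop diagram $W_-\xleftarrow{}W\xrightarrow{}W_+$, $W_-\xrightarrow{\phi_-}W_0\xleftarrow{\phi_+}W_+$ by identifying $W$ as a common blow-up of $W_\pm$ along the embedded $Y_\pm$ and $W_\pm$ as small resolutions of the cone $W_0$, and then to read off the flip/flop dichotomy from a single intersection number computed across the roof $W$. Throughout I set $k_\pm=\dim Y-\dim Y_\pm$, and I recall from the proof of Theorem \ref{theorem:theorem drum} that $Y=\mathbb{P}_{Y_-}(\mathcal{E}_-)=\mathbb{P}_{Y_+}(\mathcal{E}_+)$, that $p_\mp^{\ast}L_\mp=\mathcal{L}_\mp$, and that $\mathcal{O}_{\mathbb{P}(\mathcal{E}_\mp)}(1)=\mathcal{L}_\pm$.

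First I would produce the two blow-down maps. Applying Lemma \ref{lemma:lemma relative} with base $Y_-$, $r=1$, and $E=\mathcal{E}_-(L_-)$, and using $\mathbb{P}(\mathcal{E}_-(L_-))=\mathbb{P}(\mathcal{E}_-)=Y$ together with $\mathcal{O}_{\mathbb{P}(\mathcal{E}_-(L_-))}(1)=\mathcal{O}_{\mathbb{P}(\mathcal{E}_-)}(1)\otimes p_-^{\ast}L_-=\mathcal{L}_+\otimes\mathcal{L}_-$, I obtain $\mathrm{Bl}_{Y_-}(W_-)\cong\mathbb{P}_Y(\mathcal{O}\oplus\mathcal{L}_+\otimes\mathcal{L}_-)=W$, with exceptional divisor the section $S:=\mathbb{P}_Y(\mathcal{O})\cong Y$. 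The symmetric computation gives $\mathrm{Bl}_{Y_+}(W_+)\cong W$ with the \emph{same} exceptional divisor $S$; the crucial point is that $S\cong Y$ carries both projective-bundle structures, so that $W\to W_-$ contracts the $\mathbb{P}^{k_-}$-fibres of $p_-$ while $W\to W_+$ contracts the $\mathbb{P}^{k_+}$-fibres of $p_+$.

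Next I would construct the small contractions. The evaluation $V_-\otimes V_+\otimes\mathcal{O}_{Y_-}\to\mathcal{E}_-(L_-)$ is surjective, since $L_-$ and $\mathcal{E}_-$ are globally generated by $V_-$ and $V_+$ respectively (global generation of $\mathcal{E}_-$ following from $\mathcal{O}_{\mathbb{P}(\mathcal{E}_-)}(1)=\mathcal{L}_+$ being globally generated on $Y$); together with the identity on the trivial summand it induces a morphism $W_-\to\mathbb{P}(k\oplus(V_-\otimes V_+))$. By the argument of Lemma \ref{theorem:sumtriv}, adapted from base $\mathbb{P}^n$ to $Y_-$, its image is the cone $C(i(Y))$ over the Segre image, the map is an isomorphism away from the vertex $v$, and the fibre over $v$ is the section $\mathbb{P}_{Y_-}(\mathcal{O})\cong Y_-$ — precisely the blow-up centre of $W\to W_-$, of codimension $k_-+1\ge 2$, so the contraction is small. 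Since $W_-$ is smooth, Stein factorization identifies the Stein factor with the normalization $W_0$ of $C(i(Y))$; the restriction of this morphism to $S\cong Y$ is the Segre embedding $i$, and the identical $+$-side computation lands on the same $W_0$. This produces the diagram and verifies the axioms of \cite[Definition 3.2]{LF}, modeled by $Y$, with $\phi_\pm$ small and $\psi$ the induced birational map between $W_-$ and $W_+$.

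Finally, for the dichotomy I would compute $K_{W_-}\cdot C_-$ on the contracted ray. Comparing discrepancies across $W$ gives $K_W=(W\to W_\pm)^{\ast}K_{W_\pm}+k_\pm S$ (blow-up of a smooth centre of codimension $k_\pm+1$). Testing against a line $\tilde C\subset S$ in a fibre of $p_+$: from the $+$-side $\tilde C$ is a fibre-line of the exceptional $\mathbb{P}^{k_+}$-bundle, so $S\cdot\tilde C=-1$ and $K_W\cdot\tilde C=-k_+$; from the $-$-side $W\to W_-$ maps $\tilde C$ onto the curve $C_-:=(W\to W_-)_{\ast}\tilde C\subset Y_-$ (non-constant since a $p_+$-fibre is not a $p_-$-fibre), whence $K_W\cdot\tilde C=K_{W_-}\cdot C_--k_-$. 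Equating yields $K_{W_-}\cdot C_-=k_--k_+=\dim Y_+-\dim Y_-$; as $Y_\pm$ have Picard rank one the contractions are extremal, so $\phi_-$ is $K$-negative exactly when $\dim Y_->\dim Y_+$ and $K$-trivial when $\dim Y_-=\dim Y_+$, while symmetrically $K_{W_+}\cdot C_+=\dim Y_--\dim Y_+\ge 0$. Hence $\psi$ is a flip in the first case and a flop in the second. I expect the main obstacle to be the third step above — adapting Lemma \ref{theorem:sumtriv} to a general base, confirming basepoint-freeness so that $\phi_-$ is an honest morphism, and checking that the Stein factors of $\phi_-$ and $\phi_+$ coincide with the single normalized cone $W_0$ (including that the section contracted by $\phi_-$ is exactly the blow-up centre of $W\to W_-$); the intersection-number computation is then a short check.
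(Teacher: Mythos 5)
Your proposal is correct and shares its skeleton with the paper's proof: both hinge on Lemma \ref{lemma:lemma relative} to realize $W=\mathbb{P}_Y(\mathcal{O}\oplus \mathcal{L}_+\otimes\mathcal{L}_-)$ simultaneously as $\textnormal{Bl}_{Y_-}W_-$ and $\textnormal{Bl}_{Y_+}W_+$ with common exceptional divisor $Y$ restricting to $p_\mp$, and both deduce smallness from $\textnormal{codim}(Y_\pm,W_\pm)=k_\pm+1\geq 2$. You differ in two sub-steps. For the maps to $W_0$, the paper works from the roof: it views $W=\mathbb{P}_Y(\mathcal{O}\oplus\mathcal{O}_Y(1))$ as the blow-up of the projective cone $W_1$ over $i(Y)$ at its vertex, takes a single Stein factorization $W\xrightarrow{q}W_0\to W_1$, and then descends $q$ through $b_\pm$ by rigidity (every curve contracted by $b_\pm$ lies in $Y$, hence is contracted by $q$); this produces $s_\pm$ and the commuting diagram at once and entirely sidesteps what you flag as your main obstacle, namely matching two Stein factorizations performed separately on $W_-$ and $W_+$. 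Your route does close up --- each $\phi_\pm$ is proper and birational onto the cone from a smooth source, so each Stein factor is normal and finite birational over the cone, hence \emph{the} normalization $W_0$ --- but it costs you the base-change of Lemma \ref{theorem:sumtriv} to an arbitrary base and the identification of the induced embedding of $Y=\mathbb{P}(\mathcal{E}_-(L_-))$ with $i$, neither of which the paper needs. For the flip/flop dichotomy, the paper computes $\deg(K_{W_-}|_{Y_-})$ intrinsically, writing $K_Y$ in two ways via the canonical bundle formula for the two bundle structures, equating coefficients in $\textnormal{Pic}(Y)$, and applying adjunction with $N_{Y_-/W_-}\cong\mathcal{E}_-^*(-L_-)$; your discrepancy-plus-projection-formula computation across the roof with the single test curve $\tilde C$ reaches the same value $k_--k_+=\dim Y_+-\dim Y_-$ somewhat faster, and is in fact the same trick the paper uses to compute the length of the extremal ray in the proof of Theorem \ref{theorem:theorem drum}$(ii)$. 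The one point you leave implicit is that $W_-$ and $W_+$ are not isomorphic over $W_0$, as required by \cite[Definition 3.2]{LF}; the paper disposes of this by observing that $b_-$ and $b_+$ are distinct elementary contractions of $W$.
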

\begin{proof}
Note that $i^*\mathcal{O}(1)=\mathcal{L}_{+}\otimes \mathcal{L}_{-}.$ Call this line bundle $\mathcal{O}_Y(1).$ Let $W_1$ be the projective cone over the projective embedding $i$. So, $W=\mathbb{P}_Y(\mathcal{O}_Y\oplus\mathcal{O}_Y(1))$ is the blow-up of $W_1$ at the vertex $z_1$ of the cone. Let $q_1:W\to W_0$ be the blow-up map. We regard $Y$ as a subvariety of $W$ via the zero section of $\mathcal{O}_Y(-1)$. So $q_1$ contracts $Y$ to $z_1$. Since $q_1$ has connected fibres, the Stein factorization of $q_1$ is $W\xlongrightarrow{q} W_0\to W_1$, where the last map is the normalization map. If $z_0\in W_0$ is the preimage of $z_1$ under the normalization, then $q$ contracts $Y$ to $z_0.$

By Lemma \ref{lemma:lemma relative}, we get $\textnormal{Bl}_{Y_-}W_-=\textnormal{Bl}_{Y_-}\mathbb{P}_{Y_{-}}(\mathcal{O}\oplus \mathcal{E}_{-}(L_{-}))=\mathbb{P}_{\mathbb{P}(\mathcal{E}_-(L_-))}(\mathcal{O}\oplus\mathcal{O}_{\mathbb{P}(\mathcal{E}_-(L_-))}(1)) = \mathbb{P}_{\mathbb{P}(\mathcal{E}_-)}(\mathcal{O}\oplus\mathcal{O}_{\mathbb{P}(\mathcal{E}_-)}(1)\otimes \mathcal{L}_-)=\mathbb{P}_Y(\mathcal{O}\oplus \mathcal{L}_{+}\otimes \mathcal{L}_{-})=W$. Here we are using: $\mathbb{P}_{Y_{-}}(\mathcal{E_{-}})= Y$ and $\mathcal{O}_{\mathbb{P}(\mathcal{E_-})}(1)= \mathcal{L}_{+}$, which we observed in the proof of Theorem \ref{theorem:theorem drum}. Let $b_-:W\to W_-$ be the blow-up map along $Y_-$. Also by Lemma \ref{lemma:lemma relative}, Ex($b_-)=Y$ and $b_-|_Y:Y\to Y_-$ is same as $p_-$.  Similarly, $\textnormal{Bl}_{Y_+}W_+=W$. If $b_+:W\to W_+$ is the blow-up map along $Y_+$, then Ex($b_+)=Y$ and $b_+|_Y:Y\to Y_+$ is same as $p_+$.  Any curve in $W$ contracted by $b_{\pm}$ lies $Y$, so is contracted by $q$. So there is induced $s_{\pm}: W_{\pm}\to W_0$, contracting $Y_{\pm}$ to $z_0$ such that the following diagram commutes:
 \[
\begin{tikzcd} 
{}& W \arrow[dl, "b_{-}"'] \arrow[dr, "b_{+}"] & \\
W_{-} \arrow[dr, "s_{-}"] &&  W_{+}\arrow[dl, "s_{+}"] \\
  & W_{0}  
\end{tikzcd}
\]
Also, $W_{\pm}$ are not isomorphic over $W_0$, as $b_{\pm}$ are different contractions. Note that $s_{\pm}$ are small contractions as codim$(Y_{\pm}, W_{\pm})>$ codim$(Y, W)=1$. This proves the theorem, except the last statement.

Let $k_{\pm}=$dim$Y-$dim$Y_{\pm}$. For a vector bundle $F$ on $Y_{\pm}$, let deg$F$ be the unique integer such that det$F=$deg$F$. $L_{\pm}$. So index($Y_{\pm})=-$deg$(K_{Y_{\pm}})$. By the formula for the canonical divisor of a projective bundle (for example, as in \cite{Re}, p. 349), $$K_Y=(\text{deg}(\mathcal{E}_+)-\text{index}(Y_-))\mathcal{L}_--(k_-+1)\mathcal{L}_+=(\text{deg}(\mathcal{E}_+)-\text{index}(Y_+))\mathcal{L}_+-(k_++1)\mathcal{L}_-.$$ Here we are using additive notation for tensor product of line bundles. Since Pic($Y$) is freely generated by $\mathcal{L}_-$ and $\mathcal{L}_+$, equating coefficients we get 
\begin{equation}
deg(\mathcal{E}_-)-index(Y_-)=-(k_++1).
\end{equation}

Let $N_{Y_-/W_-}\cong \mathcal{E}^*(-L_-)$ be the normal bundle of $Y_-$ in $W_-$. By adjunction, $$-\text{index}(Y_-)=\text{deg}(K_{W_-}|_{Y_-})+\text{deg}N_{Y_-/W_-}$$ $$=\text{deg}(K_{W_-}|_{Y_-})+\text{deg}(\mathcal{E}_-^*(-L_-))=\text{deg}(K_{W_-}|_{Y_-})-\text{deg}\mathcal{E}_--(k_-+1).$$ Now eq.$(8)$ gives $$\text{deg}(K_{W_-}|_{Y_-})=k_--k_+=\text{dim}(Y_+)-\text{dim}(Y_-).$$ Similarly, $\text{deg}(K_{W_+}|_{Y_+})=\text{dim}(Y_-)-\text{dim}(Y_+)$. This proves the last statement in the theorem.
\end{proof}
For $Y=\mathbb{P}^m\times \mathbb{P}^n$, we get the classical Atiyah flip \textnormal{\cite[Section $3.1$]{LF}}. We get the following
\begin{corollary}\label{theorem:theorem sm}
For any $m,n\geq 1,\, \mathbb{P}_{\mathbb{P}^{n}}(\mathcal{O}_{\mathbb{P}^{n}}\oplus\mathcal{O}_{\mathbb{P}^{n}}(1)^{m+1})$ has another contraction, which is small birational with image the cone over the Segre embedding of $\mathbb{P}^{m}\times\mathbb{P}^{n}$.    
\end{corollary}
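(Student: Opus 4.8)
The plan is to specialize the rooftop-flip theorem of this section to the triple $(Y,\mathcal{L}_-,\mathcal{L}_+)$ with $Y=\mathbb{P}^m\times\mathbb{P}^n$, equipped with its two projections $p_-\colon Y\to Y_-=\mathbb{P}^n$ and $p_+\colon Y\to Y_+=\mathbb{P}^m$, and with $\mathcal{L}_-=p_-^{\ast}\mathcal{O}_{\mathbb{P}^n}(1)$, $\mathcal{L}_+=p_+^{\ast}\mathcal{O}_{\mathbb{P}^m}(1)$. This is exactly the drum underlying the classical Atiyah flip, so the hypotheses of the theorem are satisfied; it then remains only to identify the varieties $W_-$ and $W_0$ explicitly and to read off the conclusion for $W_-$.

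First I would compute the bundle $\mathcal{E}_-=(p_-)_{\ast}\mathcal{L}_+=(p_-)_{\ast}p_+^{\ast}\mathcal{O}_{\mathbb{P}^m}(1)$. Since $p_-$ is the projection of a product, with fibres isomorphic to $\mathbb{P}^m$, flat base change and the projection formula give
\[
\mathcal{E}_-\cong H^0(\mathbb{P}^m,\mathcal{O}_{\mathbb{P}^m}(1))\otimes_k\mathcal{O}_{\mathbb{P}^n}\cong\mathcal{O}_{\mathbb{P}^n}^{\,m+1}.
\]
Therefore $\mathcal{E}_-(L_-)\cong\mathcal{O}_{\mathbb{P}^n}(1)^{m+1}$ and
\[
W_-=\mathbb{P}_{Y_-}(\mathcal{O}\oplus\mathcal{E}_-(L_-))\cong\mathbb{P}_{\mathbb{P}^n}(\mathcal{O}_{\mathbb{P}^n}\oplus\mathcal{O}_{\mathbb{P}^n}(1)^{m+1}),
\]
which is precisely the projective bundle in the statement.

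Next I would identify $W_0$. Here $H^0(Y_-,L_-)=k^{n+1}$ and $H^0(Y_+,L_+)=k^{m+1}$, and the map $i\colon Y\hookrightarrow\mathbb{P}(k^{n+1}\otimes k^{m+1})$ is the Segre embedding of $\mathbb{P}^m\times\mathbb{P}^n$. As the Segre variety is projectively normal, its projective cone is already normal, so the normalization is an isomorphism and $W_0$ is literally the projective cone over the Segre embedding. The theorem now furnishes the small contraction $s_-\colon W_-\to W_0$, which contracts the zero section $Y_-\cong\mathbb{P}^n$ (of codimension $m+1\ge 2$ in $W_-$) to the cone point $z_0$. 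This $s_-$ is birational, is distinct from the projective-bundle contraction $W_-\to\mathbb{P}^n$, and has image the cone over the Segre embedding, which is exactly what the corollary asserts.

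The essential content beyond invoking the theorem is contained in the two identifications above. Of these, the only delicate point is the normality of $W_0$: I would dispose of it by appealing to the classical projective normality of the Segre variety, so that no genuine normalization is required. Everything else — the existence, smallness, and birationality of $s_-$ — is inherited verbatim from the rooftop-flip theorem, with $Y_-=\mathbb{P}^n$ contracted to a point.
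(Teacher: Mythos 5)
Your proposal is correct and follows essentially the same route as the paper, which obtains this corollary by specializing the rooftop-flip theorem to $Y=\mathbb{P}^m\times\mathbb{P}^n$ (the classical Atiyah flip); your computation of $\mathcal{E}_-\cong\mathcal{O}_{\mathbb{P}^n}^{\,m+1}$ and the identification of $W_-$ and $W_0$ simply make explicit what the paper leaves implicit. The remark on projective normality of the Segre variety, so that the normalization in the definition of $W_0$ is an isomorphism, is a worthwhile detail the paper does not spell out.
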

\begin{remark}\label{remark:remark flop}
Since $\mathcal{O}_{\mathbb{P}^{n}}\oplus\mathcal{O}_{\mathbb{P}^{n}}(1)^{m+1}$ is globally generated but not ample, the map
\begin{equation*}
\mathbb{P}_{\mathbb{P}^{n}}(\mathcal{O}_{\mathbb{P}^{n}}\oplus\mathcal{O}_{\mathbb{P}^{n}}(1)^{m+1})\xlongrightarrow{\pi}\mathbb{P}(H^{0}(\mathbb{P}^{n}, \mathcal{O}_{\mathbb{P}^{n}}\oplus\mathcal{O}_{\mathbb{P}^{n}}(1)^{m+1}))    
\end{equation*}
is not finite. If $\mathbb{P}_{\mathbb{P}^{n}}(\mathcal{O}_{\mathbb{P}^{n}}\oplus\mathcal{O}_{\mathbb{P}^{n}}(1)^{m+1})\xlongrightarrow{\pi_{1}}Z\xlongrightarrow{g}\mathbb{P} \mathbb(H^{0}(\mathbb{P}^{n}, \mathcal{O}_{\mathbb{P}^{n}}\oplus\mathcal{O}_{\mathbb{P}^{n}}(1)^{m+1}))$ is the Stein factorization of $\pi$, it follows that $\pi_{1}$ is the contraction of $\mathbb{P}_{\mathbb{P}^{n}}(\mathcal{O}_{\mathbb{P}^{n}}\oplus\mathcal{O}_{\mathbb{P}^{n}}(1)^{m+1})$ other than the projection to $\mathbb{P}^{n}$, and Corollary \ref{theorem:theorem sm}, shows $\pi_{1}$ is birational. So, $\pi$ is generically finite onto its image. Applying Lemma \ref{theorem:sumtriv} repeatedly, we see that for all $r\geq 1,\mathbb{P}_{\mathbb{P}^{n}}(\mathcal{O}_{\mathbb{P}^{n}}^{r}\oplus\mathcal{O}_{\mathbb{P}^{n}}(1)^{m+1})\xlongrightarrow{\pi_{r}}\mathbb{P}(H^{0}(\mathbb{P}^{n}, \mathcal{O}_{\mathbb{P}^{n}}^{r}\oplus\mathcal{O}_{\mathbb{P}^{n}}(1)^{m+1}))$ is generically finite but not finite onto its image. By looking at the Stein factorization of $\pi_{r}$, we see that for all $r$, $\mathbb{P}_{\mathbb{P}^{n}}(\mathcal{O}_{\mathbb{P}^{n}}^{r}\oplus\mathcal{O}_{\mathbb{P}^{n}}(1)^{m+1})$ has a birational contraction. 
\end{remark}
\begin{remark}
    Smooth projective varieties of Picard rank $2$ having two projective bundle structures are studied in several papers, for example, \textnormal{\cite{Sa}}, \textnormal{\cite{BSV}}, \textnormal{\cite{Ka}}, \textnormal{\cite{OCR}}. By Theorems \ref{theorem:theorem drum}, \ref{theorem:theorem sm}, for each smooth projective variety $Y$ of Picard rank $2$ having two projective bundle structures such that the associated drum is smooth (see remark $2.17$ in \cite{LF} for the known examples till now), we get examples of
    \begin{enumerate}
        \item Smooth projective variety of Picard rank $2$ having a projective bundle and a smooth blow-up structure, and
        \item Rooftop flip modelled by $Y$.
    \end{enumerate}
\end{remark}
\section{Globally generated vector bundles on projective space}

We finish the study of extremal contractions of projectivizations of vector bundles in Theorem \ref{theorem:theorem cl}.
\begin{enumerate}
\item {Case (1)\label{case:case 1}:} {We can identify $\mathbb{P}_{\mathbb{P}^{m}}(\mathcal{O}_{\mathbb{P}^{m}}^{r}\oplus\mathcal{O}_{\mathbb{P}^{m}}(1))= \textnormal{Bl}_\textnormal{M}\mathbb{P}^{{m+r}}$}, where $M = \{[\underline{x}] \in \mathbb{P}^{m+r}\big{|}\, x_{0} = x_{1} =\ldots  =  x_{m} = 0\}$. If $X\hookrightarrow\mathbb{P}^{m}$ is a subvariety, $Z = \mathbb{P}_{X}(\mathcal{O}_{X}^{r}\oplus \mathcal{O}_{X}(1))\hookrightarrow\mathbb{P}_{\mathbb{P}^{m}}(\mathcal{O}_{\mathbb{P}^{m}}^{r}\oplus \mathcal{O}_{\mathbb{P}^{m}}(1)) = \textnormal{Bl}_\textnormal{M}(\mathbb{P}^{m+r})$ is the strict transform of $p(Z)$, where $p: \textnormal{Bl}_\textnormal{M}(\mathbb{P}^{m+r})\longrightarrow\mathbb{P}^{m+r}$ is the projection. But $p(Z) = C(X)$, the cone over $X$ with vertex $M$. The projective bundle $\mathbb{P}_{X}(\mathcal{O}_{X}^{r}\oplus \mathcal{O}_{X}(1))$ is the blow-up of $C(X)$ at $M$.

Taking $X\hookrightarrow\mathbb{P}^{m}$ to be the $2$-uple embedding of $\mathbb{P}^n$ in $\mathbb{P}^{{n+2 \choose 2}-1}$, we see that $\mathbb{P}_{\mathbb{P}^{n}}(\mathcal{O}_{\mathbb{P}^{n}}^{r-1}\oplus\mathcal{O}_{\mathbb{P}^{n}}(2))$ is the blow-up of $C(X)\hookrightarrow\mathbb{P}^{{n+2 \choose 2}+r-2}$ at the vertex of the cone, which is a linear subspace of dimension $r-2$.

\item {Case (4):} We saw in the proof of corollary \ref{theorem:theorem r} $(2)$ that
\begin{equation*}
\mathbb{P}_{\mathbb{P}^{3}}(\Omega_{\mathbb{P}^3}(2))\xlongrightarrow{\pi}\mathbb{P}(H^{0}(\mathbb{P}^{3}, \Omega_{\mathbb{P}^3}(2)))
\end{equation*}

identifies $\mathbb{P}_{\mathbb{P}^{3}}(\Omega_{\mathbb{P}^3}(2))$ as a projective bundle over a smooth quadric in $\mathbb{P}^{5}$ (Plucker embedding of $\text{Gr}(2,4)$). So, dim$\,\pi^{-1}(x)>0$ for a  general $x$ in Im($\,\pi$). Applying Lemma \ref{theorem:sumtriv} repeatedly, we get dim$\,\tilde{\pi}^{-1}(x)>0$, for a general member in Im$\,\tilde{\pi}$, where $$ \tilde{\pi} : \mathbb{P}_{\mathbb{P}^{3}}(\Omega_{\mathbb{P}^3}(2)\oplus\mathcal{O}_{\mathbb{P}^3}^{r-3})\longrightarrow \mathbb{P}(H^{0}(\mathbb{P}^{3}, \Omega_{\mathbb{P}^3}(2)\oplus\mathcal{O}_{\mathbb{P}^3}^{r-3})) = \mathbb{P}(k^{r-3}\oplus H^{0}(\mathbb{P}^{3}, \Omega_{\mathbb{P}^3}(2))).$$ So, $\tilde{\pi}$ is never birational. A similar conclusion holds for $\mathcal{N}_{\mathbb{P}^{3}}(1).$

\item{Case (6):} $\pi: \mathbb{P}_{\mathbb{P}^n}(E) \hookrightarrow\mathbb{P}(\mathcal{O}_{\mathbb{P}^n}^{r+1})\longrightarrow\mathbb{P}^{r}$ is a morphism to a lower-dimensional variety, so $\pi$ is never birational. The general fibre of $\pi$ is a smooth quadric in $\mathbb{P}^{n}$, which is connected. Thus, $\pi$ is the other contraction of $\mathbb{P}_{\mathbb{P}^n}(E)$.
\end{enumerate}

\begin{theorem}\label{theorem:theorem sbu}
Let $n\geq 2$. If $E$ is a globally generated vector bundle on $\mathbb{P}^n$ of rank greater than or equal to 2 and $c_{1}(E) = 2$, then $\mathbb{P}_{\mathbb{P}^n}(E)$ has a smooth blow-up structure if and only if one of the following holds:
\begin{itemize}
   
\item[(i)] {$E\cong \mathcal{O}_{\mathbb{P}^{n}}(1)\oplus T_{\mathbb{P}^n}(-1)$}. In this case, $\mathbb{P}_{\mathbb{P}^n}(E)$  is the blow-up of a smooth quadric in $\mathbb{P}^{2n+1}$ along a linear subvariety of dimension $n$.
\item[(ii)] {There is a short exact sequence $0\longrightarrow\mathcal{O}_{\mathbb{P}^{n}}\longrightarrow\mathcal{O}_{\mathbb{P}^{n}}(1)\oplus T_{\mathbb{P}^{n}}(-1)\longrightarrow E\longrightarrow 0$}. In this case, $\mathbb{P}_{\mathbb{P}^n}(E)$  is the blow-up of a smooth quadric in $\mathbb{P}^{2n}$ along a linear subvariety of dimension $n-1$.
\item[(iii)] If $n = 2$, there is a short exact sequence $0\longrightarrow \mathcal{O}_{\mathbb{P}^{n}}(-1)^{2}\xlongrightarrow{s}\mathcal{O}_{\mathbb{P}^{n}}^{r+2}\longrightarrow E\longrightarrow 0$ and $E$ does not contain a trivial line bundle as a direct summand. In this case, $2\leq r\leq 4$, and $\mathbb{P}_{\mathbb{P}^n}(E)$ is the blow-up of $\mathbb{P}^{r+1}$ along $V$, where
\begin{equation*}
V = \begin{cases}
\textnormal{twisted cubic}, & \quad \textnormal{if}\quad r = 2, \\
Z(X_{0}X_{3} - X_{1}X_{2}, X_{1}X_{4} - X_{2}X_{3}, X_{0}X_{4} - X_{2}^{2})\hookrightarrow \mathbb{P}^{4}, & \quad\textnormal{if}\quad r = 3, \\
\textnormal{image of Segre embedding}\, \mathbb{P}^{1}\times\mathbb{P}^{2}\hookrightarrow\mathbb{P}^{5}, & \quad \textnormal{if}\quad r = 4.
\end{cases}
\end{equation*} 
\end{itemize}
\end{theorem}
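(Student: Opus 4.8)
The plan is to reduce the statement to the classification in Theorem \ref{theorem:theorem cl} and then read off the answer from the descriptions of the ``other contraction'' obtained in the previous sections. The first step I would record is the following structural reduction. Since $\mathbb{P}_{\mathbb{P}^n}(E)$ is a smooth Fano variety of Picard rank $2$, it has exactly two extremal contractions, one of which is the projective bundle projection $p$, a contraction of fibre type. A smooth blow-up structure gives a birational divisorial contraction, which cannot be $p$; hence it must agree with the other contraction. As the other contraction is the contraction of the unique remaining extremal ray, its target is canonically determined, and the blow-down of any smooth blow-up structure must have connected fibres contracting exactly that ray, so it coincides with the other contraction. Consequently $\mathbb{P}_{\mathbb{P}^n}(E)$ admits a smooth blow-up structure if and only if its other contraction is the blow-down of a smooth variety along a smooth centre. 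This turns the theorem into an inspection of the other contraction in each of the six cases of Theorem \ref{theorem:theorem cl}.

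For the ``if'' direction I would cite the earlier results directly. Case $(i)$ is Corollary \ref{theorem:theorem r}$(1)(i)$, equivalently Theorem \ref{theorem:theorem 4.10} with $r=n+1$, producing the blow-up of a smooth quadric in $\mathbb{P}^{2n+1}$ along a linear $\mathbb{P}^{n}$. Case $(ii)$ is Corollary \ref{theorem:theorem r}$(1)(ii)$, equivalently Theorem \ref{theorem:theorem 4.10} with $r=n$, producing the blow-up of a smooth quadric in $\mathbb{P}^{2n}$ along a linear $\mathbb{P}^{n-1}$. Case $(iii)$ is exactly the regime $n=2$, $r=t$ of Theorem \ref{theorem:theorem nav}, producing the blow-up of $\mathbb{P}^{r+1}$ along the smooth variety $V$.

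For the ``only if'' direction I would run through the six cases and exhibit, in each case not on the list, the obstruction to being a smooth blow-up. In Cases $(4)$ and $(6)$, and in Case $(5)$ with $n\geq 3$, the other contraction is of fibre type (recorded in the analysis of Cases $(4),(6)$ of \S 7 and in Theorem \ref{theorem:theorem nav}), hence not birational and not a blow-up. In Case $(2)$ the other contraction is small: for $r\geq 3$ its exceptional locus $\mathbb{P}_{\mathbb{P}^n}(\mathcal{O}_{\mathbb{P}^n}^{r-2})\cong \mathbb{P}^{n}\times\mathbb{P}^{r-3}$ has codimension $2$, as in Corollary \ref{theorem:theorem sm} and Remark \ref{remark:remark flop} (and for $r=2$ it degenerates to the fibre-type projection of $\mathbb{P}^n\times\mathbb{P}^1$); a blow-up along a centre of codimension $\geq 2$ contracts a divisor, so no smooth blow-up is possible. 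In Case $(1)$ the other contraction is the blow-down of the cone over the $2$-uple embedding of $\mathbb{P}^n$ onto its vertex (Case $(1)$ of \S 7), whose target is singular, so the blow-up is not smooth. In Case $(3)$, Theorem \ref{theorem:theorem 4.10} realises the other contraction as the blow-up of a quadric in $\mathbb{P}^{n+r}$ along a linear subvariety, which is a smooth blow-up exactly when that quadric is smooth; by Theorem \ref{theorem:theorem 4.10} this forces $r=n$ (case $(ii)$) or $r=n+1$ with $E\cong \mathcal{O}_{\mathbb{P}^n}(1)\oplus T_{\mathbb{P}^n}(-1)$ (case $(i)$), all other subcases giving singular quadrics. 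Finally in Case $(5)$ with $n=2$, Theorem \ref{theorem:theorem nav} states that a smooth blow-up occurs precisely when $r=t$, i.e. when $E$ has no trivial line bundle summand, which is case $(iii)$. Assembling these, the bundles yielding a smooth blow-up structure are exactly those in $(i)$--$(iii)$.

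The step I expect to be the main obstacle is the structural reduction itself: arguing cleanly that the blow-down of a putative smooth blow-up structure must be the other extremal contraction, so that its target is forced to equal the image already computed. The care lies in separating the three obstruction types — fibre-type contractions, small contractions, and divisorial contractions onto singular targets — and in confirming that ``blow-up of a singular variety'' (Case $(1)$, the singular subcases of $(3)$, and Case $(5)$ with a trivial summand) genuinely violates the definition of a smooth blow-up, rather than failing only for a particular presentation.
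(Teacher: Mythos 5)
Your proposal is correct and follows essentially the same route as the paper, whose own proof simply cites Theorems \ref{theorem:theorem nav}, \ref{theorem:theorem r}, \ref{theorem:theorem sm} and the case analysis at the beginning of \S 7. The only substantive addition is that you make explicit the reduction (used implicitly in the paper, and valid since $\mathbb{P}_{\mathbb{P}^n}(E)$ is Fano of Picard rank $2$ with $\mathrm{rk}\,E\geq 2$) that any smooth blow-up structure must coincide with the uniquely determined other extremal contraction, after which the six cases of Theorem~($\ast$) are disposed of exactly as in the paper.
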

\begin{proof}
The proof follows from Theorems \ref{theorem:theorem nav}, \ref{theorem:theorem r},  \ref{theorem:theorem sm} and the analysis of the cases at the beginning of this section.
\end{proof}
\begin{theorem}\label{theorem:theorem noample}
Let $n\geq 2$, and let $E\neq \mathcal{O}_{{\mathbb{P}^n}}(1)^{\oplus2}$ be a gloobally generated vector bundle of rank $\geq2$ and first Chern class $2$. Then the linear system $|\mathcal{O}_{\mathbb{P}(E)}(1)|$ gives a contraction of $\mathbb{P}_{\mathbb{P}^n}(E)$. In particular, $E$ is nef but not ample.
\end{theorem}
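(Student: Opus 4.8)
The plan is to reduce the whole statement to the single claim that $E$ is not ample, and to prove that claim by exhibiting, in each case of the classification $(\ast)$ (Theorem \ref{theorem:theorem cl}), an irreducible curve on which $\mathcal{O}_{\mathbb{P}(E)}(1)$ has degree $0$. First I would record the soft part. Since $E$ is globally generated, $\mathcal{O}_{\mathbb{P}(E)}(1)$ is globally generated, hence nef, which proves the ``nef'' half, and the complete linear system $|\mathcal{O}_{\mathbb{P}(E)}(1)|$ defines a morphism whose Stein factorization $\mathbb{P}_{\mathbb{P}^n}(E)\to Z$ is a contraction in the sense of $\S 2$. Because $\mathbb{P}_{\mathbb{P}^n}(E)$ is Fano of Picard rank $2$ (the first Lemma of $\S 2$), its nef cone is spanned by $H=p^\ast\mathcal{O}_{\mathbb{P}^n}(1)$ and one further extremal ray, and $\mathcal{O}_{\mathbb{P}(E)}(1)$ is not proportional to $H$ since it restricts to $\mathcal{O}(1)$ on the fibres of $p$. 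Thus $\mathcal{O}_{\mathbb{P}(E)}(1)$ is either ample or generates the second extremal ray, in which latter case $\mathbb{P}_{\mathbb{P}^n}(E)\to Z$ is precisely the other contraction. Hence the statement is equivalent to ``$E$ is not ample'', and as $\mathcal{O}_{\mathbb{P}(E)}(1)$ is nef this in turn amounts to producing an irreducible curve $C$ with $\mathcal{O}_{\mathbb{P}(E)}(1)\cdot C=0$.

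For every bundle of rank $r\geq 3$ there is a uniform argument. Restricting to a line $\ell\cong\mathbb{P}^1$, global generation together with $c_1(E)=2$ forces $E|_\ell\cong\bigoplus_{i=1}^r\mathcal{O}_\ell(a_i)$ with each $a_i\geq 0$ and $\sum_i a_i=2$; as $r\geq 3$, some $a_i=0$, so $E|_\ell$ admits a trivial quotient $\mathcal{O}_\ell$. In Hartshorne's convention this quotient is a section $\sigma\colon\ell\to\mathbb{P}(E|_\ell)\subset\mathbb{P}_{\mathbb{P}^n}(E)$ with $\sigma^\ast\mathcal{O}_{\mathbb{P}(E)}(1)=\mathcal{O}_\ell$, so its image $C$ satisfies $\mathcal{O}_{\mathbb{P}(E)}(1)\cdot C=0$ and $E$ is not ample.

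It remains to treat $r=2$. By $(\ast)$ a globally generated rank $2$ bundle with $c_1=2$, other than the excluded $\mathcal{O}_{\mathbb{P}^n}(1)^{\oplus 2}$, is one of: $\mathcal{O}_{\mathbb{P}^n}(2)\oplus\mathcal{O}_{\mathbb{P}^n}$; the rank $2$ cokernel over $\mathbb{P}^2$ arising in case $(3)$; the rank $2$ cokernels over $\mathbb{P}^2$ arising in cases $(5)$ and $(6)$; and $\mathcal{N}_{\mathbb{P}^3}(1)$ from case $(4)$. The bundle $\mathcal{O}_{\mathbb{P}^n}(2)\oplus\mathcal{O}_{\mathbb{P}^n}$ has a trivial direct summand, hence a surjection onto $\mathcal{O}_{\mathbb{P}^n}$, so it is not ample. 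For the others the required contracted curve is already produced earlier: Theorem \ref{theorem:theorem r}$(1)$ and Theorem \ref{theorem:theorem sbu} realize the case $(3)$ bundle as a blow-up of a smooth quadric, Theorem \ref{theorem:theorem nav} shows the case $(5)$ bundle contracts birationally onto $\mathbb{P}^{r+1}$, and the case analysis at the start of $\S 7$ shows that case $(6)$ (fibre type onto $\mathbb{P}^r$) and $\mathcal{N}_{\mathbb{P}^3}(1)$ (fibre type) have positive dimensional fibres. In each of these the morphism defined by $|\mathcal{O}_{\mathbb{P}(E)}(1)|$ contracts a curve, giving the desired $C$.

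The hard part is exactly these balanced rank $2$ cases over $\mathbb{P}^2$ (cases $(3),(5),(6)$ with $n=2$): on a generic line the restriction is $\mathcal{O}_\ell(1)^{\oplus 2}$, so non-ampleness is invisible to a generic line and one genuinely needs either a special unbalanced line or the explicit blow-up and fibration descriptions established earlier. A clean uniform alternative for all of $r=2$ would be to invoke the classification of uniform rank $2$ bundles on $\mathbb{P}^n$ with $n\geq 2$: the only one with splitting type $(1,1)$ is $\mathcal{O}_{\mathbb{P}^n}(1)^{\oplus 2}$, so any other rank $2$ bundle is unbalanced on some line and the line argument of the second paragraph applies verbatim. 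I would use whichever formulation reads more cleanly in context.
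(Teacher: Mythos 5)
Your proof is correct, and it differs from the paper's in a worthwhile way. The paper's own proof is a one-line citation: it reads the non-finiteness of the morphism defined by $|\mathcal{O}_{\mathbb{P}(E)}(1)|$ off the case-by-case structural results already established (Lemma \ref{theorem:sumtriv}, Theorems \ref{theorem:theorem nav}, \ref{theorem:theorem r}, \ref{theorem:theorem sm} and the case analysis opening \S 7), one case of the classification $(\ast)$ at a time. Your opening reduction (Picard-rank-$2$ Fano, two-ray nef cone, $\mathcal{O}_{\mathbb{P}(E)}(1)$ not proportional to $H$, hence ample or extremal) is only implicit in the paper and is worth spelling out. The genuinely new ingredient is your restriction-to-a-line argument for rank $\geq 3$: global generation forces $E|_\ell\cong\bigoplus\mathcal{O}_\ell(a_i)$ with $a_i\geq 0$ and $\sum a_i=2$, so some $a_i=0$, and the resulting trivial quotient gives a section of $\mathbb{P}(E|_\ell)\to\ell$ on which $\mathcal{O}_{\mathbb{P}(E)}(1)$ has degree $0$. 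This settles every case of rank at least $3$ with no appeal to the classification, whereas the paper needs the full case analysis throughout. For rank $2$ you revert to the same earlier theorems as the paper (legitimately so --- those proofs do record that the birational or fibre-type maps are induced by $\mathcal{O}_{\mathbb{P}(E)}(1)$, so the contracted curves have degree $0$ against it), and your alternative via Van de Ven's classification of uniform $2$-bundles is also sound, since no twist of $T_{\mathbb{P}^2}$ or $\Omega_{\mathbb{P}^2}$ has splitting type $(1,1)$. The trade-off is that the paper's route simultaneously identifies the target and structure of the other contraction, which it needs afterwards for Theorem \ref{theorem:theorem cone}, while your argument establishes exactly the non-ampleness this statement asks for, with less machinery.
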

\begin{proof}
Follows from Theorems \ref{theorem:sumtriv}, \ref{theorem:theorem nav}, \ref{theorem:theorem r}, \ref{theorem:theorem sm}, and the analysis of the cases at the beginning of this section.
\end{proof}
We call a vector bundle $E$ over $\mathbb{P}^n$ \textit{big} if $\mathcal{O}_{\mathbb{P}(E)}(1)$ is a big line bundle on $\mathbb{P}_{\mathbb{P}^n}(E).$ .
\begin{theorem}\label{theorem:theorem O}
Let $n\geq 2$, and $E\neq \mathcal{O}_{\mathbb{P}^n}(1)^{\oplus2}$ be a gloobally generated vector bundle of rank $\geq2$ and first Chern class $2$. Then the following are equivalent:
\begin{enumerate}
\item [(i)] $\mathbb{P}_{\mathbb{P}^n}(E)$ has a birational contraction.
\item [(ii)] $E$ is big.
\item [(iii)]  $E$ is of type $(1), (2),$ or $ (3)$ of Theorem \textnormal{\ref{theorem:theorem cl}} or $n=2$ and $E$ is of type $(5)$ of Theorem \textnormal{\ref{theorem:theorem cl}}.
\end{enumerate} 
\end{theorem}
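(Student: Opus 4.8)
The plan is to reduce the three-way equivalence to a single question about one distinguished contraction, and then to read off the answer case by case from the descriptions already obtained in Sections~4--7. Since $c_1(E)=2\le n$, the first Lemma of Section~2 shows that $\mathbb{P}_{\mathbb{P}^n}(E)$ is a smooth Fano variety of Picard rank $2$, so it has exactly two extremal contractions, and any nontrivial birational contraction must be one of them. One is the bundle projection $p\colon\mathbb{P}_{\mathbb{P}^n}(E)\to\mathbb{P}^n$, whose image has dimension $n<n+r-1=\dim\mathbb{P}_{\mathbb{P}^n}(E)$ (as $r\ge 2$), so $p$ is never birational. Since $E\neq\mathcal{O}_{\mathbb{P}^n}(1)^{\oplus 2}$, Theorem~\ref{theorem:theorem noample} shows that $|\mathcal{O}_{\mathbb{P}(E)}(1)|$ defines a contraction $\phi$; as $\mathcal{O}_{\mathbb{P}(E)}(1)$ restricts to $\mathcal{O}(1)$ on the fibres of $p$, it does not contract those fibres, so $\phi\neq p$ and $\phi$ is the \emph{other contraction}.

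First I would establish (i)$\Leftrightarrow$(ii). By the previous paragraph, $\mathbb{P}_{\mathbb{P}^n}(E)$ has a birational contraction if and only if $\phi$ is birational. Since $\phi$ is the morphism attached to the globally generated (hence semiample, base-point-free) divisor $\mathcal{O}_{\mathbb{P}(E)}(1)$, its image has dimension equal to the Iitaka dimension of $\mathcal{O}_{\mathbb{P}(E)}(1)$; as $\phi$ has connected fibres, it is birational exactly when it is generically finite onto its image, i.e. when $\mathcal{O}_{\mathbb{P}(E)}(1)$ is big. By the definition of bigness of $E$ used in the paper, this is precisely condition (ii).

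Next I would settle (ii)$\Leftrightarrow$(iii) by determining, for each type in Theorem~\ref{theorem:theorem cl}, whether $\phi$ is birational; this is already essentially done. For type $(1)$, the analysis of Case~\ref{case:case 1} realizes $\phi$ as the blow-up of a cone over a $2$-uple embedding, hence birational. For type $(2)$ with $E\neq\mathcal{O}_{\mathbb{P}^n}(1)^{\oplus 2}$ (so $r\ge 3$), Corollary~\ref{theorem:theorem sm} and Remark~\ref{remark:remark flop} give a birational contraction. For type $(3)$, Theorem~\ref{theorem:theorem 4.10} exhibits $\mathbb{P}_{\mathbb{P}^n}(E)$ as the blow-up of a quadric along a linear subvariety, so $\phi$ is birational. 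Conversely, for type $(4)$ the analysis of Case~(4) shows $\phi$ factors through a positive-dimensional fibration (a projective bundle over a quadric, stabilized under trivial summands by Lemma~\ref{theorem:sumtriv}), so $\phi$ is not birational; for type $(6)$ the analysis of Case~(6) shows $\phi$ maps onto $\mathbb{P}^r$ of strictly smaller dimension, again not birational; and for type $(5)$, Theorem~\ref{theorem:theorem nav} shows $\phi$ is of fibre type onto $\mathbb{P}^{r+1}$ with general fibre $\mathbb{P}^{n-2}$ when $n\ge 3$ (not birational) and birational when $n=2$. Collecting these, $\phi$ is birational iff $E$ is of type $(1),(2),(3)$, or ($n=2$ and type $(5)$), which is (iii). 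Together with (i)$\Leftrightarrow$(ii), all three conditions are equivalent.

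The hard part is really conceptual and lies in the reduction (i)$\Leftrightarrow$(ii): one must be certain that the only contraction that can be birational is $\phi$, and that its birationality is governed exactly by the bigness of $\mathcal{O}_{\mathbb{P}(E)}(1)$ rather than merely by the dimension of the complete-linear-system image. Once that is secured, the determination (ii)$\Leftrightarrow$(iii) is purely bookkeeping against the explicit geometric descriptions proved earlier, the only genuinely $n$-dependent input being type $(5)$ via Theorem~\ref{theorem:theorem nav}.
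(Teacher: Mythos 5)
Your proposal is correct and follows essentially the same route as the paper: the paper's proof of (i)$\Leftrightarrow$(ii) is exactly the reduction via Theorem \ref{theorem:theorem noample} to the bigness of $\mathcal{O}_{\mathbb{P}(E)}(1)$ governing whether the other contraction is generically finite, and the remaining equivalence is the same case-by-case bookkeeping against Lemma \ref{theorem:sumtriv}, Theorems \ref{theorem:theorem nav}, \ref{theorem:theorem r}, \ref{theorem:theorem sm}, and the case analysis at the start of \S 7. Your write-up merely makes explicit the Picard-rank-$2$ Fano argument that the paper leaves implicit.
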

\begin{proof}
The equivalence of $(i)$ and $(ii)$ follows from Theorem \ref{theorem:theorem noample}. The equivalence of $(i)$ and $(iii)$ follows from Lemma \ref{theorem:sumtriv}, Theorems \ref{theorem:theorem nav}, \ref{theorem:theorem r}, \ref{theorem:theorem sm}, and the analysis of the cases at the beginning of this section.
\end{proof}
\hspace{10pt} Now we compute the nef and pseudoeffective cones of $\mathbb{P}_{\mathbb{P}^n}(E)$.

\begin{theorem}\label{theorem:theorem cone}
Let $n\geq 2$, and $E\neq \mathcal{O}_{\mathbb{P}^n}(1)^{\oplus2}$ be a gloobally generated vector bundle of rank $\geq2$ and first Chern class $2$.
\begin{enumerate}
\item [(i)]\textnormal{Nef}$(\mathbb{P}_{\mathbb{P}^n}(E))=<H, \mathcal{O}_{\mathbb{P}(E)}(1)>$
     \item [(ii)] $\overline{\textnormal{Eff}}^{1}(\mathbb{P}_{\mathbb{P}^{n}}(E)) = \langle H, \mathcal{O}_{\mathbb{P}(E)}(1)-cH \rangle $, where $H$ is the pullback of $ \mathcal{O}_{\mathbb{P}^n}(1)$ to $ \mathbb{P}_{\mathbb{P}^{n}}(E)$, and 
$$
c=
\begin{cases}
0 , & \quad \textrm{if E is of type (4)}, n\geq 2\, \textrm{or of type (5)}, n\geq 3\, \textrm {or of type (4)}, \\
\frac{1}{2}, & \quad\textrm{if n=2, E is of type (5)}, \\
1, & \quad \textrm{if E is of type (2) or (3) },\\
2, &  \quad \textrm{if E is of type (1)}.

\end{cases}
$$
     \end{enumerate}
 \end{theorem}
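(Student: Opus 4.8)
Since $\mathbb{P}_{\mathbb{P}^n}(E)$ has Picard rank $2$, both cones are two-dimensional and it suffices to identify their two boundary rays; throughout I would use Lemma \ref{theorem:theorem osum} to discard trivial summands and reduce each type to its ``core'' bundle, on which $c$ is unchanged. For the nef cone, one ray is spanned by $H=p^{\ast}\mathcal{O}_{\mathbb{P}^n}(1)$, which is nef (a pullback) but not ample since $p$ contracts the fibres, and the other by $\mathcal{O}_{\mathbb{P}(E)}(1)$, which is globally generated hence nef but not ample: by Theorem \ref{theorem:theorem noample} the system $|\mathcal{O}_{\mathbb{P}(E)}(1)|$ gives a non-trivial contraction, and this is exactly where $E\neq\mathcal{O}_{\mathbb{P}^n}(1)^{\oplus2}$ is used. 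Two distinct non-ample nef classes in a rank-$2$ Picard group are forced to be the extremal rays, so $\mathrm{Nef}=\langle H,\mathcal{O}_{\mathbb{P}(E)}(1)\rangle$.

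\textbf{Part $(ii)$, the dichotomy.} As observed before Lemma \ref{theorem:theorem osum}, $H$ spans one ray of $\overline{\mathrm{Eff}}^1$ and the other is $\mathcal{O}_{\mathbb{P}(E)}(1)-cH$ with $c\ge0$. The clean reduction is that $c=0$ exactly when $\mathcal{O}_{\mathbb{P}(E)}(1)$ is not big: if $c>0$ then $\mathcal{O}_{\mathbb{P}(E)}(1)$ lies interior to the cone $\langle H,\mathcal{O}_{\mathbb{P}(E)}(1)-cH\rangle$, hence is big, while a boundary class is never big. Since $\mathcal{O}_{\mathbb{P}(E)}(1)$ is semiample, it is big iff the other contraction is birational; therefore $c=0$ precisely for types $(6)$ (all $n$), $(5)$ with $n\ge3$, and $(4)$, where Theorem \ref{theorem:theorem nav} and the case analysis opening this section show the contraction has fibre type. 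In each of these $\overline{\mathrm{Eff}}^1=\langle H,\mathcal{O}_{\mathbb{P}(E)}(1)\rangle$.

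\textbf{Part $(ii)$, the big cases.} For types $(1),(2),(3)$ and type $(5)$ with $n=2$ I must compute $c=\sup\{b/a: H^0(\mathbb{P}^n,\mathrm{Sym}^aE(-b))\neq0\}$. Types $(1)$ and $(2)$ are split: the highest-degree summand of $\mathrm{Sym}^aE$ is $\mathcal{O}(2a)$, respectively $\mathcal{O}(a)$, giving $c=2$ and $c=1$. For type $(3)$ (and for type $(5)$ with $n=2$), stripping trivial summands via Lemma \ref{theorem:theorem osum} reduces the core to a smooth blow-up $\pi\colon\mathbb{P}(E)\to Y\subset\mathbb{P}^N$ with $\mathcal{O}_{\mathbb{P}(E)}(1)=\pi^{\ast}\mathcal{O}_Y(1)$ and second effective ray the exceptional divisor $\Xi$ (Theorems \ref{theorem:theorem 4.10}, \ref{theorem:theorem r}, \ref{theorem:theorem nav}). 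The key relation is $H=e\cdot\mathcal{O}_{\mathbb{P}(E)}(1)-\Xi$, where $e$ is the degree of the linear system resolving the bundle projection $p$ as a strict transform under $\pi$ (these forms vanish to order one along the smooth reduced centre). This yields $\Xi=e\cdot\mathcal{O}_{\mathbb{P}(E)}(1)-H$, hence $c=1/e$. For type $(3)$, $\mathbb{P}(E)=\mathrm{Bl}_LQ$ and $p$ is the projection of the quadric $Q$ from the linear centre $L$, cut out by linear forms, so $e=1$ and $c=1$.

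\textbf{Main obstacle.} The delicate case is type $(5)$ with $n=2$. Here the core is $\mathbb{P}(E_1)=\mathrm{Bl}_V\mathbb{P}^{t+1}$ with $V=Z(Q_0,Q_1,Q_2)$ the determinantal variety cut out by the three $2\times2$ minors (Theorem \ref{theorem:theorem nav}). The crux is to recognise that $p\colon\mathbb{P}(E_1)\to\mathbb{P}^2$ resolves the \emph{quadratic} map $\mathbb{P}^{t+1}\dashrightarrow\mathbb{P}^2$, $\underline X\mapsto[Q_0:Q_1:Q_2]$: the $Q_i$ are quadrics vanishing to order one along the smooth centre $V$, so their strict transforms span $|2\,\mathcal{O}_{\mathbb{P}(E_1)}(1)-\Xi|$ and define $p$, giving $H=2\,\mathcal{O}_{\mathbb{P}(E_1)}(1)-\Xi$, hence $\Xi=2\,\mathcal{O}_{\mathbb{P}(E_1)}(1)-H$ and $c=\tfrac12$, uniformly for $t\in\{2,3,4\}$. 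Equivalently one checks $\Xi\cdot\ell=2$ for a fibre line $\ell$ of $p$, that is, that such a line is a genuine secant of $V$; I expect the real work to be matching the abstract projection $p$ with the explicit minor map, using the two projections of $\mathbb{P}(E_1)\subset\mathbb{P}^2\times\mathbb{P}^{t+1}$. Finally, Lemma \ref{theorem:theorem osum} propagates every computed value of $c$ back to the bundles carrying trivial summands, completing all cases.
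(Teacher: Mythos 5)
Your proposal is correct and its overall architecture coincides with the paper's: part $(i)$ via Theorem \ref{theorem:theorem noample}; the dichotomy $c=0\Leftrightarrow\mathcal{O}_{\mathbb{P}(E)}(1)$ not big for types $(4)$, $(6)$ and $(5)$ with $n\geq3$; direct symmetric-power computations for the split types $(1)$ and $(2)$; reduction to the core bundle by Lemma \ref{theorem:theorem osum}; and, for types $(3)$ and $(5)$ with $n=2$, the identification of the second extremal ray with the exceptional divisor of the smooth blow-up structure (which, to be airtight, should explicitly invoke Lemma \ref{theorem:theorem E} to see that this divisor is effective but not big, hence extremal). Where you genuinely diverge is in computing the class of that exceptional divisor $\Xi$: you use the uniform relation $H=e\cdot\mathcal{O}_{\mathbb{P}(E)}(1)-\Xi$, obtained by recognizing the bundle projection $p$ as the resolution of the degree-$e$ projection from the centre ($e=1$ for the linear centre in type $(3)$, $e=2$ for the determinantal centre cut out by the $2\times2$ minors in type $(5)$), whereas the paper restricts the exceptional class from the ambient $\mathbb{P}(\mathcal{O}^{n+1}\oplus\mathcal{O}(1))\to\mathbb{P}^{2n+1}$ in type $(3)$ and cites \cite[Corollaries 3.12--3.14]{Ra} in type $(5)$. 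Your route is more self-contained and treats both cases by one mechanism; the verification you flag as the ``real work'' (that $p$ agrees with the minor map $[Q_0:Q_1:Q_2]$) is indeed immediate from the bilinear equations $\underline{x}^tA\underline{\alpha}=\underline{x}^tB\underline{\alpha}=0$ defining $\mathbb{P}(E_1)\subset\mathbb{P}^2\times\mathbb{P}^{t+1}$, since the fibre over $\underline{x}\notin V$ is the cross product of the two row vectors, whose coordinates are exactly the minors. Note also that your relation $\Xi=2\,\mathcal{O}_{\mathbb{P}(E)}(1)-H$ is the one that actually yields $c=\tfrac12$; the paper's displayed formula $E_2=2H-\mathcal{O}_{\mathbb{P}(E)}(1)$ has the roles of $H$ and $\mathcal{O}_{\mathbb{P}(E)}(1)$ inadvertently interchanged.
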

 \begin{proof}
  Part $(i)$ follows easily from Theorem \ref{theorem:theorem noample}. We now prove $(ii)$. If $E$ is of type $(4)$, $ n \geq 2$, or of type $(5)$, $ n \geq 3$, or of type $(4)$, by Theorem \ref{theorem:theorem cone} $ \mathcal{O}_{\mathbb{P}(E)}(1)$ is not big, so $ \overline{\textnormal{Eff}}^{1}(\mathbb{P}_{\mathbb{P}^{n}}(E))= \langle H, \mathcal{O}_{\mathbb{P}(E)}(1) \rangle $, that is, $c=0$.

If $E$ is of type $(1)$, 
$$ H^{0}(\mathbb{P}_{\mathbb{P}^{n}}(E), \mathcal{O}_{\mathbb{P}(E)}(1)^{a}\otimes H^{-b}) = H^{0}(\mathbb{P}^n, \textrm{Sym}^{a}(E)(-b)) $$
\hspace{225pt}$= H^{0}(\mathbb{P}^n, \textrm{Sym}^{a}(\mathcal{O}_{\mathbb{P}^n}^{r-1}\oplus \mathcal{O}_{\mathbb{P}^n}(2))(-b))$

\vspace{2pt}
\hspace{223pt}$= \bigoplus_{i}H^{0}(\mathbb{P}^n, \mathcal{O}_{\mathbb{P}^n}(l_{i}-b)) $

for some integers $ 0\leq l_{i} \leq 2a$, and $ l_{i} = 2a$ for some $i$. So we have 
$$ H^{0}(\mathbb{P}_{\mathbb{P}^{n}}(E), \mathcal{O}_{\mathbb{P}(E)}(1)^{a} \otimes H^{-b}) =0  \Leftrightarrow b > 2a .$$
From this, one easily gets 
$$ \overline{\textnormal{Eff}}^{1}(\mathbb{P}_{\mathbb{P}^{n}}(E))= \langle H, \mathcal{O}_{\mathbb{P}(E)}(1) - 2H \rangle .$$
 This implies that $c=2$. A similar proof shows $c=1$ if $E$ is of type $(2)$. If $E$ is of type $(3)$, we observed in Theorem \ref{theorem:theorem 4.10} that $ E = \mathcal{O}_{\mathbb{P}^n}^{s}\oplus E_{1}$ for some $ s\geq 0$ and here either $ E_{1}=\mathcal{O}_{\mathbb{P}^n}(1) \oplus T_{\mathbb{P}^{n}}(-1)$ or $ E_{1}$ fits into a short exact sequence 
 $$ 0\longrightarrow  \mathcal{O}_{\mathbb{P}^{n}} \longrightarrow \mathcal{O}_{\mathbb{P}^{n}}(1) \oplus T(-1) \longrightarrow E_{1} \longrightarrow 0 .$$ 
 By Lemma \ref{theorem:theorem osum}, we can assume without loss of generality $ E= E_{1}.$ By Theorem \ref{theorem:theorem 4.10}, $ \mathbb{P}_{\mathbb{P}^{n}}(E)$ has a smooth blow-up structure, call the exceptional divisor $E_{2}$. By Lemma \ref{theorem:theorem E}, $E_{2}$ is effective but not big, so 
$$ \overline{\textnormal{Eff}}^{1}(\mathbb{P}_{\mathbb{P}^{n}}(E))= \langle H, E_{2} \rangle.$$

The surjection $ \mathcal{O}_{\mathbb{P}^n}^{n+1} \longrightarrow T_{\mathbb{P}^n}(-1)$ is Euler exact sequence gives a surjection $ \mathcal{O}_{\mathbb{P}^n}^{n+1} \oplus \mathcal{O}_{\mathbb{P}^n}(1) \longrightarrow E$, which induces a morphism 
$$ \pi: \mathbb{P}_{\mathbb{P}^{n}}(E)\hookrightarrow \mathbb{P}( \mathcal{O}_{\mathbb{P}^n} \oplus \mathcal{O}_{\mathbb{P}^n}(1)) \xlongrightarrow{\pi_{1}} 
\mathbb{P}^{2n+1}.
$$ 

By Lemma \ref{theorem:theorem key}, here $ \pi_{1}$ is the blow-up of $ \mathbb{P}^{2n+1}$ along a linear subspace of dimension $n$, with exceptional divisor $ E_{3}= \mathbb{P}( \mathcal{O}_{\mathbb{P}^n}^{n+1}) \hookrightarrow \mathbb{P}(\mathcal{O}_{\mathbb{P}^n}^{n+1} \oplus \mathcal{O}_{\mathbb{P}^n}(1) ).$ Hence  we have 
$$ E_{3}= -H + \mathcal{O}_{\mathbb{P}(\mathcal{O}_{\mathbb{P}^n}^{n+1} \oplus \mathcal{O}_{\mathbb{P}^n}(1))}(1)$$
 in $ \textnormal{N}^{1}(\mathbb{P}_{\mathbb{P}^{n}}(\mathcal{O}_{\mathbb{P}^n}^{n+1} \oplus \mathcal{O}_{\mathbb{P}^n}(1)))$, $E_{2}= E_{3} \cap \mathbb{P}_{\mathbb{P}^{n}}(E)$, so $ E_{2}= \mathcal{O}_{\mathbb{P}(E)}(1)-H$, as $$ \mathcal{O}_{\mathbb{P}(\mathcal{O}_{\mathbb{P}^n}^{n+1} \oplus \mathcal{O}_{\mathbb{P}^n}(1))}(1)\big{|}_{\mathbb{P}(E)}= \mathcal{O}_{\mathbb{P}(E)}(1).$$ 
 So we conclude that $ c=1$. If $E$ is of type $(5)$ and $n=2$, by Theorem \ref{theorem:theorem nav}, we have $E= \mathcal{O}_{\mathbb{P}^n}^s \oplus E_{1}$, for some $ s\geq 0$, $E_{1}$ does not have trivial bundle as a direct summand,  and  there is a short exact sequence    

$$ 0 \longrightarrow \mathcal{O}_{\mathbb{P}^n}(-1)^{2} \longrightarrow \mathcal{O}_{\mathbb{P}^n}^{r_{1}+1} \longrightarrow E_{1} \longrightarrow 0$$ 
for some $r_{1}$ with $ 2\leq r_{1} \leq 4.$ By the Lemma, we can assume without loss of generality that $ s=0$. So $ r=r_{1}$, $E=E_{1}$. By Theorem \ref{theorem:theorem sbu}, $ \mathbb{P}_{\mathbb{P}^{n}}(E)$ has a smooth blow-up structure, called the exceptional divisor $E_{2}.$ By Lemma \ref{theorem:theorem E}$, E_{2}$ is effective but not big, so $$ \overline{\textnormal{Eff}}^{1}(\mathbb{P}_{\mathbb{P}^{n}}(E))= \langle H, E_{2} \rangle .$$ 
By {\cite [Corollaries 3.12, 3.13, 3.14]{Ra}},  and part $(i)$ of Theorem \ref{theorem:theorem cone}, we have $ \mathcal{O}_{\mathbb{P}(E)}(1)= 2H- E_{2},$ so $E_{2}= 2H- \mathcal{O}_{\mathbb{P}(E)}(1)$ i.e., $ c= \frac{1}{2}$.
\end{proof}


\section{Acknowledgement}
The authors are grateful to Professor Nagaraj D.S. and Professor János Kollár for giving valuable suggestions and references.

\vspace{40pt}

\begin{flushleft}
{\scshape Indian Institute of Science Education and Research Tirupati, Srinivasapuram, Yerpedu Mandal, Tirupati Dist, Andhra Pradesh, India – 517619.}

{\fontfamily{cmtt}\selectfont
\textit{Email address: ashimabansal@students.iisertirupati.ac.in} }
\end{flushleft}
\vspace{0.5mm}
\begin{flushleft}
{\scshape Fine Hall, Princeton, NJ 700108}.

{\fontfamily{cmtt}\selectfont
\textit{Email address: ss6663@princeton.edu} }
\end{flushleft}
\vspace{0.5mm}
\begin{flushleft}
{\scshape Indian Institute of Science Education and Research Tirupati, Srinivasapuram, Yerpedu Mandal, Tirupati Dist, Andhra Pradesh, India – 517619.}

{\fontfamily{cmtt}\selectfont
\textit{Email address: shivamvats@students.iisertirupati.ac.in} }
\end{flushleft}

\end{document}